\newcommand{\CC}{\mathbb{C}}
\newcommand{\RR}{\mathbb{R}}
\newcommand{\OO}{\mathcal{O}}
\newcommand{\K}{\mathcal{K}}
\newcommand{\C}{\mathcal{C}}    
\newcommand{\HH}{\mathcal{H}}
\newcommand{\DD}{\mathcal{D}}
\newcommand{\LL}{\mathcal{L}}
\newcommand{\m}{\mathfrak{m}}
\newcommand{\PP}{\mathbb{P}}
\newcommand{\ZZ}{\mathbb{Z}}
\newcommand{\Ric}{\textup{Ric}}
\newcommand{\re}{\operatorname{Re}}
\newcommand{\tr}{\operatorname{tr}}
\newcommand{\dbtilde}[1]{\accentset{\approx}{#1}}
\newtheorem{thm}{Theorem}[section]
\newtheorem{pro}[thm]{Proposition}
\newtheorem{lem}[thm]{Lemma}
\newtheorem*{qs}{Question}
\newcounter{mtheorem}
\newtheorem{mtheorem}[mtheorem]{Theorem}
\theoremstyle{definition}
\newtheorem{rem}[thm]{Remark}
\numberwithin{equation}{section}
\title{Geodesic Equations on asymptotically locally Euclidean K\"ahler manifolds}
\author{Qi Yao}
\date{}
\begin{document}

\maketitle

\begin{abstract}
In this paper, We solve the geodesic equation in the space of Kähler metrics under the setting of asymptotically locally Euclidean (ALE) K\"ahler manifolds and establish global $\C^{1,1}$ regularity of the solution. The solution of the geodesic equation is then related to the uniqueness of scalar-flat ALE metrics. To this end, we study the asymptotic behavior
of $\varepsilon$-geodesics at spatial infinity. We will prove the convexity of Mabuchi $K$ energy along $\varepsilon$-geodesics under the assumption that the Ricci
curvature of a reference ALE K\"ahler metric is non-positive. However, by testing the Ricci curvature of ALE K\"ahler metrics, we find that on the line bundle $\OO(-k)$ over $\CC\PP^{n-1}$ with $n \geq 2$ and $k \neq n$, all ALE Kähler metrics cannot have non-positive (or non-negative) Ricci curvature.
\end{abstract}

\section{Introduction} 

In the paper, we study the geodesic equation in the setting of  ALE K\"ahler cases, assuming relatively weak fall-off conditions. Let $(X, J, g)$ be a complete non-compact K\"ahler manifold of complex dimension $n$ ($n\geq 2$), we say $(X, J, g)$ is ALE if there is a compact subset $K \subseteq X$ such that $ \psi: X\setminus K \rightarrow (\CC^n \setminus B_R)/ \Gamma$ is a diffeomorphism, where $B_R$ is a closed ball in $\CC^n$ with radius $R$ and $\Gamma$ is a finite subgroup of $U(n)$ (any ALE K\"ahler manifold has only one end according to \cite[Proposition 1.5, 3.2]{hein2016mass}) and the metric $g$ satisfies the following condition on the end $X\setminus K$:
\begin{enumerate}
 \item[$\bullet$] The metric $g$ is asymptotic to the Euclidean metric $\delta_{ij}$ at the end with decay rate $-\tau$ for some $\tau> n-1$, i.e., for $i =0,1,\ldots, k$,
\begin{align} \label{decayale}
g_{ij}= \delta_{ij} + O(r^{-\tau}) , \qquad  |\nabla^i  ((\psi^{-1})^* g) |_{g_0} = O(r^{-\tau-i}).
\end{align}
\end{enumerate}
The fall-off condition $\tau > n-1 $ is the weakest decay rate to make the ADM mass coordinate-invariant in general, referring to Bartnik \cite{bartnik1986mass} and Chru\`{s}ciel \cite{Chruciel1985BoundaryCA}. 

One of the difficulties in building up a general theory of scalar-flat K\"ahler metrics in the ALE setting is that the decay rate of such metrics to their asymptotic models is not good enough compared to the Ricci-flat case. For instance, consider the family of scalar-flat K\"ahler metric constructed on $\OO_{\CC\PP^1}(-k)$ by LeBrun \cite{cmp/1104162166}, 
\begin{align*}
    g = \frac{ds^2}{1+ A/s^2 +B /s^4} + s^2 \Big[\sigma_1^2 + \sigma_2^2 + \Big(1+ \frac{A}{s^2} + \frac{B}{s^4}\Big) \sigma_3^2\Big],
\end{align*}
where $A$, $B$ are constants, $\sigma_{1}$, $\sigma_{2}$, $\sigma_3$ are three invariant vector fields on $3$-sphere and $s$ is a radial function on $\OO(-k)$. It can be checked that $g-g_{euc} = O(r^{-2})$, where $r$ denotes the geodesic distance from a fixed basepoint, indicating that the Kähler potential function should be of $\log$ growth. In Arezzo-Pacard \cite[Lemma 7.2]{arezzo2006blowing}, an expansion theorem is proved for scalar-flat K\"ahler metrics in the complement of $B_\Gamma = \{ z \in \CC^n / \Gamma : |z| \leq 1 \}$ in $\CC^n / \Gamma$, where $\Gamma$ is a finite subgroup of $U(n)$, assuming that the $dd^c$-lemma holds in this situation. In \cite{yao2022mass}, the author proved a $dd^c$ lemma and an expansion theorem under the setting of asymptotically conical (AC) K\"ahler manifolds. Here, we only need a theorem of weaker version under the setting of ALE K\"ahler manifolds.

\begin{thm} \label{yao2022} (Yao 2022)
Let $(X, J)$ be an ALE K\"ahler manifold asymptotic to $\CC^n / \Gamma$.  Let $\omega_1$, $\omega_2$ be K\"ahler forms in the same Kähler class of $(X, J)$ with the corresponding metrics satisfying \eqref{decayale} and such that the scalar curvatures of $\omega_1$ and $\omega_2$ are equal, $R_1 \equiv R_2$. 
Then  
\begin{align}
\omega_2 = \omega_1 + dd^c \varphi, \quad \text{ with the potential } 
\varphi \in \C^{\infty}_{2-2\tilde{\tau}}
\end{align}
for some $\tilde{\tau} >n-1$ depending on $(n, \tau)$.
\end{thm}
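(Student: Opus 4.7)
The plan is to first produce a potential $\varphi$ with weak a priori decay using a $dd^c$-type lemma in the ALE setting, and then to bootstrap its decay by exploiting the fact that $\varphi$ satisfies a fourth-order elliptic equation whose nonlinear terms decay strictly faster than its linear ones. Since $\omega_1$ and $\omega_2$ lie in the same Kähler class, $\omega_2 - \omega_1$ is an exact real $(1,1)$-form of pointwise size $O(r^{-\tau})$ on the end. Adapting the $dd^c$-lemma from the AC setting of \cite{yao2022mass} to ALE geometry via weighted Hodge theory on $(X, J)$, one writes $\omega_2 - \omega_1 = dd^c \varphi_0$ with some initial decay $\varphi_0 = O(r^{2-\tau + \epsilon})$ for arbitrarily small $\epsilon > 0$, where the pluriharmonic gauge freedom on the end is fixed by a normalization compatible with the coordinates $\psi$.

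The equation satisfied by $\varphi$ comes from the hypothesis $R(\omega_1 + dd^c \varphi) = R(\omega_1)$. Subtracting and expanding the two scalar curvature expressions, this becomes a fourth-order equation of the schematic form
\begin{align*}
L_{\omega_1} \varphi + Q(\varphi) = 0,
\end{align*}
where $L_{\omega_1}$ is the (real) Lichnerowicz operator with leading term $-\Delta_{\omega_1}^2$, and $Q$ is a polynomial nonlinearity in $\nabla^j \varphi$ ($j \leq 4$) vanishing to at least quadratic order at $\varphi = 0$. On the asymptotic model $\CC^n/\Gamma$, the leading part of $L_{\omega_1}$ has indicial roots at $\{0, 2, 2-2n, 4-2n\}$, so in weighted Hölder spaces $\C^{k,\alpha}_\delta$ with $\delta$ avoiding these critical values, $L_{\omega_1}$ is Fredholm between appropriate weighted spaces.

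The bootstrap then proceeds as follows: if currently $\varphi = O(r^\delta)$ with $\delta < 0$, then $Q(\varphi) = O(r^{2\delta - 4 + \eta})$, with $\eta$ accounting for decay coming from $\Ric(\omega_1)$-type coefficients in $Q$, which is strictly faster decay than the $\delta - 4$ that a purely linear source would produce. Solving $L_{\omega_1} \psi = -Q(\varphi)$ modulo cokernel for a correction $\psi$ with improved decay and iterating, the decay of $\varphi$ improves geometrically until it stabilizes at a rate $2 - 2\tilde\tau$ with some $\tilde\tau > n - 1$ determined by $(n, \tau)$. Smoothness $\varphi \in \C^\infty$ then follows from standard interior Schauder estimates for the fourth-order equation applied on unit balls in the rescaled geometry.

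The hardest step will be the weighted Fredholm theory for $L_{\omega_1}$: one must identify its cokernel (which may contain obstructions related to holomorphic vector fields and to the ADM mass of $\omega_1$) and verify at each iteration that the nonlinear source $Q(\varphi)$ lies in the image. A related technical difficulty is the passage through the indicial root $4 - 2n$, which requires either absorbing potential logarithmic terms into the freedom $\epsilon$ or exploiting the specific quadratic structure of $Q$ to avoid the obstruction there.
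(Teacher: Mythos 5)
This paper does not actually prove Theorem \ref{yao2022}; it is imported as a black box from the author's earlier paper \cite{yao2022mass}, as the phrase ``(Yao 2022)'' and the surrounding discussion indicate. The paper describes the cited argument as consisting of a $dd^c$-lemma in the AC/ALE setting combined with an Arezzo--Pacard-style expansion theorem for the fourth-order scalar curvature equation \cite[Lemma 7.2]{arezzo2006blowing}, and your proposal -- produce a potential via a weighted $dd^c$-lemma, then bootstrap its decay through the equation $R(\omega_1+dd^c\varphi)=R(\omega_1)$ using the indicial structure of the linearized (Lichnerowicz-type) operator -- is a faithful reconstruction of exactly that strategy. So your route is the intended one, not an alternative.

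That said, since you are effectively re-deriving a result proved elsewhere, be aware of two points where your sketch is thinner than the argument would need to be. First, the initial potential from the $dd^c$-lemma only satisfies $\varphi_0 = O(r^{2-\tau+\epsilon})$, and since $\tau>n-1$ can be as small as just above $n-1$ (e.g.\ just above $1$ when $n=2$), this initial estimate allows $\varphi_0$ to grow; your bootstrap step is written for $\delta<0$ and does not obviously start from a growing seed, so the first iteration (and the normalization fixing the pluriharmonic gauge that makes $2-\tau+\epsilon$ the correct starting weight rather than $2$) needs a separate argument. Second, the endpoint rate $2-2\tilde\tau$ with $\tilde\tau>n-1$ is not just ``where the iteration stabilizes'': the quadratic source $Q(\varphi)$ eventually decays slower than $r^{-2n-2}$ is false but one still has to pass the indicial root $4-2n$ of $\Delta^2$, where generically a term $a\,r^{4-2n}$ (the mass term) and a $\log$ term would appear; the reason these are absent here is precisely the hypothesis $R_1\equiv R_2$, which via the Hein--LeBrun mass formula forces the ADM masses and the coefficients at the root $4-2n$ to agree, not a generic Fredholm-alternative argument. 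Your proposal gestures at this (``obstructions related to\ldots the ADM mass''), but making it precise is the crux of why the conclusion holds with $\tilde\tau>n-1$ rather than merely $\tilde\tau\geq n-1$, and it is not something the iteration alone delivers.
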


Let $\omega$ be the corresponding K\"ahler form of $g$.  
According to Theorem \ref{yao2022}, given two K\"ahler forms $\omega_1,\omega_2 \in [\omega]$, if the corresponding ALE K\"ahler metrics $g_1,g_2$ satisfy the decay condition \eqref{decayale} and that the scalar curvatures of $g_1$ and $g_2$ are identically equal, $R(g_1)\equiv R(g_2)$, then $\omega_1 - \omega_2 = dd^c f$ and $f$ decays at infinity with higher rate $-\gamma$, with $\gamma = 2\tilde{\tau}-2$, for some $\tilde{\tau} > n-1$.
Hence, for the prescribed scalar curvature problem, we consider the following restricted weighted K\"ahler potential space,
\begin{align*}
    \HH_{-\gamma} (\omega) = \{ \varphi \in \hat{\C}^{\infty}_{-\gamma}: \omega_\varphi = \omega + dd^c\varphi > 0\} \quad (\gamma > 2n-4\geq0),
\end{align*}
where the class of functions, $\hat{\C}^\infty_s$, is defined as follows 
\begin{align*}
    \C^\infty_{s} &= \{ f\in \C^\infty (X) : |\nabla_{g_0}^j f|_{g_0} = O(r^{s-j})\;\,\text{for all} \;\, j \geq 0 \},\\
    \hat{\C}^\infty_{s} &= \{\hat{f}\in \C^\infty(X): \hat{f} = f + c, \text{ for } f\in \C^{\infty}_{s} \text{ and } c \text{ is a constant}\}.
\end{align*}

Define $\omega_0 = \omega + dd^c \psi_0$, $\omega_1 = \omega+ dd^c \psi_1$, for any two boundary data $\psi_{0,1} \in \HH_{-\gamma}(\omega)$. Also introduce the linear reference path $\psi(t) = (1-t)\psi_0 + t \psi_1$ in $\HH_{-\gamma}(\omega)$. Another path $\varphi(t)$ in $\HH_{-\gamma}(\omega)$ with the same endpoints $\psi_0,\psi_1$ is called a geodesic in $\HH_{-\gamma}(\omega)$ if 
\begin{align} \label{geoequ1}
    \ddot{\varphi}(t) - \frac{1}{2} |\nabla_{\omega_{\varphi(t)}} \dot{\varphi}(t)|_{\omega_{\varphi(t)}}^2 = 0.
\end{align}
As observed by Donaldson \cite{donaldson1999symmetric} and Semmes \cite{semmes1992complex}, the geodesic equation is equivalent to a homogeneous complex Monge-Amp\`{e}re equation in the product space $X \times \Sigma$, where $\Sigma \cong [0,1]\times S^1$ can be embedded as an annulus in $\CC$. Notice that any path $\varphi(t)$ of functions on $X$ can be viewed as a function $\Phi$ on $X \times \Sigma$ via $\Phi (\cdot, t, e^{is}) = \varphi(t)$. Let $\Omega_{\Phi} = p^* \omega + dd^c \Phi$, where $p$ is the projection from $X\times \Sigma$ to $X$ and $dd^c\Phi$ is computed on $X \times \Sigma$. Then the equation \eqref{geoequ1} can be rewritten as follows:
\begin{align}
    &\Omega_{\Phi}^{n+1} = 0, \label{geoequ2} \\
    &\Omega_{\Phi} \geq 0, \label{geoequ2p}\\
    &\Phi|_{t=0,1} = \psi_{0,1}. \label{geoequ2b}
\end{align}

In \cite{donaldson1999symmetric}, Donaldson proposed a program to attack the existence and uniqueness problems regarding canonical metrics by studying the geometric structure of the potential space $\HH$, where the geodesic equation plays a central role. In the cases of compact K\"ahler manifolds, Chen \cite{chen2000space} showed that for any $\psi_0$, $\psi_1 \in \HH$, the geodesic equation has a unique solution up to $dd^c$-regularity. Blocki \cite{blocki2009gradient} and He \cite{he2015space} built up direct calculations to prove the gradient estimate and Laplacian estimate. The full $\C^{1,1}$ estimate was proved by Chu-Tosatti-Weinkove in \cite{chu2017regularity}. In the other direction, Lempert-Vivas \cite{lempert2013geodesics} and Darvas-Lempert \cite{darvas2012weak} constructed counter-examples to assert that $dd^c \Psi$  is not continuous in general, hence the $\C^{1,1}$ regularity is optimal in general. In \cite{auvray2017space}, Auvray generalized the $dd^c$-regularity to singular cases (precisely, there exist cusp singularities along simple normal crossings). The main theorem of sections \ref{sectionopen}-\ref{sectionC11} is to generalize the full $\C^{1,1}$ estimates to ALE K\"ahler manifolds.

\begin{mtheorem}\label{muniformc11}
Let $X$ be an ALE K\"ahler manifold and $\psi_0,\psi_1 \in \HH_{-\gamma} (\omega)$ $(\gamma > 0)$. Then $\psi_0$ and $\psi_1$ can be connected by a $\C^{1,1}$ geodesic $\Phi$ solving \eqref{geoequ2}, \eqref{geoequ2p}, \eqref{geoequ2b}. Moreover, there is a uniform constant $C$ depending only on $\|\psi_0\|_{\C^{1,1}(X,\omega)}$, $\|\psi_1\|_{\C^{1,1} (X,\omega)}$ and on the geometry of $(X, \omega)$ such that
\begin{align}
\sup_{X \times \Sigma}\big( |\Phi| + |\nabla_{\Theta_\Psi} \Phi|_{\Theta_\Psi} + |{\nabla}^2_{\Theta_\Psi} \Phi|_{\Theta_\Psi}  \big)  \leq C.
\end{align}
Here, $\Theta_\Psi$ is a Kähler form on $X \times \Sigma$ given by $\Theta_\Psi = \Theta + dd^c \Psi$ with $\Psi(\cdot,t,e^{is}) = \psi(t) = (1-t)\psi_0 + t\psi_1$ the linear path introduced above, and with $\Theta = p^*\omega + A dd^c t(t-1)$, where $A>0$ is fixed depending only on $\|\psi_0\|_{\C^{1,1}(X,\omega)}$, $\|\psi_1\|_{\C^{1,1} (X,\omega)}$ such that $\Theta_\Psi > 0$.
\end{mtheorem}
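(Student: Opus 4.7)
The plan is to adapt the $\varepsilon$-geodesic continuity scheme (Chen, Blocki, He, Chu-Tosatti-Weinkove in the compact case) to the non-compact ALE setting. Writing $\Phi = \Psi + u$ so that $u$ vanishes on $\partial\Sigma$, the problem \eqref{geoequ2}--\eqref{geoequ2b} becomes the Dirichlet problem
\[
(\Theta_\Psi + dd^c u)^{n+1} = 0, \qquad \Theta_\Psi + dd^c u \geq 0, \qquad u|_{\partial\Sigma} = 0
\]
on $X\times\Sigma$. I would regularize it by
\[
(\Theta_\Psi + dd^c u_\varepsilon)^{n+1} = \varepsilon\, \Theta^{n+1}, \qquad u_\varepsilon|_{\partial\Sigma} = 0,
\]
solve the regularized equation for each $\varepsilon > 0$, establish $\C^{1,1}$ bounds independent of $\varepsilon$, and extract a weak-$\ast$ limit as $\varepsilon \to 0$.

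For the existence of the smooth $\varepsilon$-geodesic $u_\varepsilon$, I would run a continuity method in a weighted H\"older space $\C^{k,\alpha}_{-\gamma}(X\times\Sigma)$. For fixed $\varepsilon > 0$ the equation is uniformly elliptic, and its linearization is a Laplace-type operator on $X\times\Sigma$ whose invertibility between weighted spaces follows from the standard Lockhart-McOwen/Bartnik theory on the ALE end of $X$ combined with the compact cross-section in the $\Sigma$-direction; smoothness and polynomial decay at spatial infinity follow by elliptic bootstrap.

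The heart of the argument is the $\varepsilon$-uniform a priori estimate. The $\C^0$ bound follows from the comparison principle with the barriers $\pm B\,t(1-t)$; the gradient bound from Blocki's Bochner-type inequality; the complex Laplacian bound from the computation of He; and the full real Hessian bound from the auxiliary-function argument of Chu-Tosatti-Weinkove. The boundary $\C^2$ estimate on $\partial\Sigma$ reduces to the classical Caffarelli-Kohn-Nirenberg-Spruck/Guan barrier construction, which applies directly since the geometry of $X\times\Sigma$ is product-like near $\partial\Sigma$.

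The main obstacle will be that every one of these estimates proceeds by applying the maximum principle to an auxiliary quantity on the \emph{non-compact} manifold $X\times\Sigma$, whereas in the compact case one simply evaluates at a maximum which is guaranteed to exist. I would overcome this by combining two ingredients. First, because $\psi_0,\psi_1 \in \HH_{-\gamma}(\omega)$, the linear reference potential $\Psi$ and its curvature $dd^c\Psi$ decay at spatial infinity, so $\Theta_\Psi$ is asymptotic to the model product form $p^*\omega_{\mathrm{euc}} + A\,dd^c t(t-1)$ on $(\CC^n/\Gamma)\times\Sigma$. Second, weighted Schauder estimates on the ALE end yield polynomial decay of $u_\varepsilon$ and its derivatives at a rate controlled by $\gamma$, so the auxiliary quantities used in the maximum-principle arguments also decay at infinity, and their maxima are attained in a fixed compact region where the compact-case proofs go through unchanged. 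If a priori decay of a particular test quantity is not immediate, I would add $\sigma\rho^{-\delta}$ with $\rho$ an ALE radial weight to the quantity being maximized, forcing the maximum into a compact set, apply the estimate, and then send $\sigma\to 0$ while absorbing the error. Once the $\varepsilon$-uniform $\C^{1,1}$ bound is established, a standard diagonal/weak-$\ast$ compactness argument as $\varepsilon \to 0$ produces the claimed $\C^{1,1}$ geodesic satisfying the stated estimate.
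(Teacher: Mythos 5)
Your overall strategy is the same as the paper's: regularize the homogeneous Monge--Amp\`ere equation to the $\varepsilon$-geodesic equation, solve it by a continuity method, prove an $\varepsilon$-uniform $\mathcal{C}^{1,1}$ estimate by adapting Blocki/Yau/Chu--Tosatti--Weinkove plus a Caffarelli--Nirenberg--Spruck--Guan-type boundary barrier, and take a weak limit as $\varepsilon\to 0$. The differences are in the linear theory and in how the maximum-principle arguments are made to work on the non-compact $X\times\Sigma$, and here I see two genuine issues.

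First, the continuity method in the decaying weighted space $\mathcal{C}^{k,\alpha}_{-\gamma}(X\times\Sigma)$ will not close. Writing $u_\varepsilon=\Phi_\varepsilon-\Psi$, the $\varepsilon$-regularized equation reduces at spatial infinity to an ODE $dd^c a(t)=(\upsilon(\varepsilon)-1)A\,dd^c\,t(t-1)$ in the $\Sigma$-direction with $a(0)=a(1)=0$; for $\varepsilon<1$ this forces $u_\varepsilon\to a(t)\not\equiv 0$ as $r\to\infty$. So $u_\varepsilon$ is \emph{not} in a space of functions decaying to zero: it tends to a nontrivial function of $t$ at infinity, which is exactly what Theorem~\ref{epsilongeodesicasymptotics} records ($\Phi_\varepsilon-c(t)$ decays). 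Your linear-theory space must therefore allow $t$-dependent constants at infinity, which is not what ``$\mathcal{C}^{k,\alpha}_{-\gamma}$'' naturally suggests. Moreover, $X\times\Sigma$ is not an AE/ALE manifold in the sense covered by Lockhart--McOwen or Bartnik: its end is asymptotic to $((\CC^n\setminus B_R)/\Gamma)\times\Sigma$, whose cross-section $S^{2n-1}/\Gamma\times\Sigma$ is a compact manifold \emph{with boundary}, so the Fredholm/isomorphism theory would need to be re-derived in this mixed setting. The paper avoids both problems: openness (Proposition~\ref{isolinMA}) is proved by an elementary exhaustion argument in \emph{unweighted} spaces $(\mathcal{C}^{k,\alpha})_0$ with the barrier $At(1-t)$, and the decay of $u_\varepsilon$ is proved \emph{afterwards}, separately, for fixed $\varepsilon$ (Section~\ref{secasymbehavior}). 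This is both simpler and avoids the indicial-root analysis altogether.

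Second, your primary plan for the a priori estimates --- use decay of $u_\varepsilon$ to force maxima of the test quantities into a fixed compact set --- is at odds with the logical structure: the decay estimate of Theorem~\ref{epsilongeodesicasymptotics} itself is proved using the $\mathcal{C}^{1,1}$ bound (the constants in~\eqref{eq:PDEforD1} depend on $\|\varphi_\varepsilon\|_{\mathcal{C}^2}$), and in any case the decay is $\varepsilon$-dependent, so the ``fixed compact region'' is not fixed as $\varepsilon\to 0$. Your backup plan is the right one and matches the paper: perturb the test quantity by $-\mathbf{e}\,r$ (or $\sigma\rho^{-\delta}$), so it attains its maximum, run the argument at the approximate maximum, and send $\mathbf{e}\to 0$. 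This is precisely Yau's generalized maximum principle (Lemma~\ref{openmax1}) and it requires only boundedness of the test quantity, never decay, which is why the paper can carry out the whole $\mathcal{C}^{1,1}$ estimate (Sections~\ref{sectionC0}--\ref{sectionC11}) completely independently of the decay analysis in Section~\ref{secasymbehavior}. I would promote your backup plan to the main plan and drop the weighted-space route, at which point your outline reproduces the paper's proof.
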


Then, we relate the solution of the geodesic equation to the uniqueness of scalar-flat ALE K\"ahler metrics in each K\"ahler class. The main idea is to follow the framework of Chen \cite{chen2000space} in the compact case, under the assumption that the Ricci curvature of the reference metric is non-positive. This was extended to the non-compact case with Poincar\'e cusp ends by Auvray \cite{auvray2017space}. In the ALE case, it is first necessary to prove sufficient decay at infinity of solutions to the $\varepsilon$-geodesic equation.

In Section \ref{secasymbehavior}, we discuss the asymptotic behavior of $\varepsilon$-geodesics. Given any two functions
\begin{align*}
    \psi_0,\psi_1 \in \HH_{-\gamma} (\omega) = \{ \varphi  \in \hat{\C}^{\infty}_{-\gamma}: \omega_\varphi = \omega + dd^c\varphi > 0 \} \quad (\gamma > 0),
\end{align*}
we set $\psi(t) = (1-t)\psi_0 + t\psi_1$ and let $\Psi$ denote the corresponding function on $X \times \Sigma$. We fix $A$ large depending on $\|\psi_0\|_{\mathcal{C}^{1,1}(X,\omega)}$, $\|\psi_1\|_{\mathcal{C}^{1,1}(X,\omega)}$ such that $\Theta_\Psi := \Theta + dd^c \Psi$ is positive on $X \times \Sigma$, where $\Theta := p^*\omega + A dd^c t(t-1)$ with $p: X \times \Sigma \to X$ the projection.  Then, we introduce the following $\varepsilon$-geodesic equations 
\begin{align*}
    (E_{\varepsilon}) \qquad 
    \begin{cases}
    (\Theta + dd^c \Phi_\varepsilon )^{n+1} = \upsilon(\varepsilon) \Theta_\Psi^{n+1}, \quad &\text{ in } X \times \Sigma,\\
    \Theta + dd^c \Phi_\varepsilon >0, & \text{ in } X \times \Sigma,\\
     \Phi_\varepsilon|_{t =0,1} = \phi_{0,1}, & \text{ on } X \times \partial \Sigma,
    \end{cases}
\end{align*}
where $\upsilon(\varepsilon)$ is a smooth nonnegative function defined in $X \times \Sigma \times [0,1]$ satisfies the following conditions 
\begin{align} \label{conupsilon}
\begin{split}
  &\upsilon(0) \equiv 0, \quad   \upsilon(1) \equiv 1;\\
  &\upsilon(\varepsilon) > 0,\qquad \qquad \qquad \qquad \  \text{ for } \varepsilon \in (0,1];\\
  & C^{-1} \varepsilon \leq \upsilon(\varepsilon) \leq \min ( C \varepsilon , 1 ), \quad  \text{for } \varepsilon \in [0,1];\\
  &|\nabla^k \upsilon(y, \varepsilon)| \leq C \varepsilon r(y)^{- \varsigma -k}, \quad \text{ for } (y, \varepsilon) \in X \times \Sigma \times [0,1], \quad k\geq 1
\end{split}
\end{align}  
where $\varsigma$ is an real number with $\varsigma \geq \gamma$. 
In particular, we are interested in the following case.
By taking 
\begin{align} \label{coeffEvarepsilon}
\upsilon(\varepsilon)= \varepsilon ((1-\chi(\varepsilon))f + \chi(\varepsilon)),
\end{align}
where $\chi$ is a smooth increasing function in $[0,1]$ equal to $0$ (resp. $1$) in a neighborhood of $0$ (resp. $1$) and $f$ is defined as follows
\begin{align} \label{coeffEvarepsilonf}
f = A^{-1}\frac{\Theta^{n+1}}{\Theta_\psi^{n+1}} \in \C^{\infty} (X \times \Sigma),
\end{align}
and in this case, $|\nabla^k f| \leq C r^{-\gamma-2-k} $, $\varsigma = \gamma+2$.
By taking $\varepsilon$ to be small enough, $(E_\varepsilon)$ can be written as
\begin{align*}
     \Big(\ddot{\varphi} - \frac{1}{2} |\nabla_{\omega_{\varphi}} \dot{\varphi}|_{\omega_{\varphi}}^2 \Big)  \omega_{\varphi}^{n} = \varepsilon \omega^n.
\end{align*}
Due to the positivity of the right hand side of $(E_\varepsilon)$, it is well known that for every $\varepsilon \in (0,1]$ there exists a solution $\Phi_\varepsilon \in \bigcap_{k,\alpha} \mathcal{C}^{k,\alpha}$. We now prove:

\begin{mtheorem}\label{epsilongeodesicasymptotics}
Let $\Phi_\varepsilon$ be the $\varepsilon$-geodesic constructed above. Then, there exists a constant $C(k,\varepsilon^{-1})$ depending on $k \geq 1$ and on an upper bound for $\varepsilon^{-1}$ such that
    \begin{align*}
        \left(|\nabla^{k}_{X,\omega}\Phi_\varepsilon|_\omega  + |\nabla^{k}_{X,\omega} \dot{\Phi}_\varepsilon|_\omega +  |\nabla^k_{X,\omega} \ddot{\Phi}_\varepsilon|_\omega\right) \leq C(k,\varepsilon^{-1}) r^{-\gamma -k}\quad\text{for all}\ k \geq 1,
    \end{align*}
    where $\nabla_{X,\omega}$ denotes the Levi-Civita connection of the ALE Kähler metric $\omega$ on $X$, acting as a differential operator in the $X$ directions on $X \times \Sigma$. And 
    \begin{align*}
        |\Phi_\varepsilon - c(t)| \leq C(\varepsilon^{-1})r^{-\gamma}, 
    \end{align*}
    where $c(t)$ is a function only depending on $t$.
    Hence, for any two potentials $\psi_0$, $\psi_1$ in $\HH_{-\gamma} (\omega)$, there exist $\varepsilon$-geodesics in $\HH_{-\gamma}(\omega)$ connecting $\psi_0$ and $\psi_1$.
\end{mtheorem}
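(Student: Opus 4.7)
The plan is to identify a time-only reference $c(t)$ capturing the spatial-infinity asymptotics of $\Phi_\varepsilon$, establish $C^0$ decay for $v_\varepsilon := \Phi_\varepsilon - c(t)$ via a comparison/barrier argument, and then bootstrap to higher derivatives by differentiating the equation and invoking weighted Schauder estimates. As $r\to\infty$ in $X$ we have $\omega\to\omega_{euc}$, $\Psi=(1-t)\psi_0+t\psi_1\to 0$, and $\upsilon(\varepsilon)$ approaches a space-independent limit $\upsilon_\infty(\varepsilon)$; substituting the ansatz $\Phi=c(t)$ into the asymptotic model equation $(\Theta_\infty+dd^c c)^{n+1}=\upsilon_\infty(\varepsilon)\Theta_\infty^{n+1}$, where $\Theta_\infty:=p^*\omega_{euc}+A\,dd^c t(t-1)$, reduces to the ODE $2A+\ddot c=2A\upsilon_\infty(\varepsilon)$ with unique solution $c(t)=A(\upsilon_\infty(\varepsilon)-1)t(t-1)$ satisfying $c(0)=c(1)=0$. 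Then $v_\varepsilon$ has boundary values $\psi_{0,1}\in\hat{\C}^\infty_{-\gamma}$, and $(E_\varepsilon)$ rewrites as a Monge--Amp\`ere equation for $v_\varepsilon$ whose right-hand side, after differencing with the background $(\Theta+dd^c c)^{n+1}$, has spatial decay $r^{-\gamma-2}$ by virtue of \eqref{conupsilon} together with the decay of $\omega-\omega_{euc}$ and of $\Psi$.

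Next I would establish the $C^0$ decay $|v_\varepsilon|\leq C(\varepsilon^{-1})r^{-\gamma}$. Theorem \ref{muniformc11} together with $\upsilon(\varepsilon)>0$ ensures uniform ellipticity (with constants depending on $\varepsilon^{-1}$) of the linearized operator $L:=\Theta_{\Phi_\varepsilon}^{i\bar j}\partial_i\partial_{\bar j}$ on $X\times\Sigma$. Writing the Monge--Amp\`ere equation in logarithmic form and exploiting the concavity of $\log\det$ yields a linear differential inequality for $v_\varepsilon$ with right-hand side of order $r^{-\gamma-2}$. A direct calculation on the ALE end gives $L(r^{-\gamma})\sim -\gamma(2n-2-\gamma)r^{-\gamma-2}$ at infinity, so for $B=B(\varepsilon^{-1})$ large enough the radial function $Br^{-\gamma}$ is a super/sub-solution dominating the boundary data, and the comparison principle delivers the desired pointwise bound.

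For higher $X$-derivatives I would differentiate $\log\det(\Theta+dd^c\Phi_\varepsilon)=\log\upsilon(\varepsilon)+\log\det\Theta_\Psi$ along a coordinate vector field $Y$ on the ALE end, obtaining a linear elliptic equation
\begin{equation*}
L(Y\Phi_\varepsilon)=Y\log\upsilon(\varepsilon)+\Theta_\Psi^{i\bar j}\,Y\Theta_{\Psi,i\bar j}-\Theta_{\Phi_\varepsilon}^{i\bar j}\,Y\Theta_{i\bar j},
\end{equation*}
with right-hand side in $O(r^{-\gamma-2})$. Interior/weighted Schauder estimates on balls $B(x_0,r(x_0)/2)\times\Sigma$ combined with the preceding $C^0$ decay yield $|Y\Phi_\varepsilon|\leq C(\varepsilon^{-1})r^{-\gamma-1}$, and iteration produces $|\nabla_X^k\Phi_\varepsilon|\leq C(k,\varepsilon^{-1})r^{-\gamma-k}$. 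Differentiating $(E_\varepsilon)$ in $t$ gives an analogous linear equation for $\dot\Phi_\varepsilon$ with right-hand side of the same order (using $\dot\Psi=\psi_1-\psi_0\in\hat{\C}^\infty_{-\gamma}$), so the bootstrap repeats; for $\ddot\Phi_\varepsilon$ I would use the equivalent form of $(E_\varepsilon)$, namely $\ddot\Phi_\varepsilon-\tfrac{1}{2}|\nabla_{\omega_{\varphi_\varepsilon}}\dot\Phi_\varepsilon|^2_{\omega_{\varphi_\varepsilon}}=\upsilon(\varepsilon)\,\omega^n/\omega_{\varphi_\varepsilon}^n$, and differentiate in $X$: every factor on the right has spatial derivatives at the required rate of decay. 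The conclusion $\Phi_\varepsilon(\cdot,t)\in\HH_{-\gamma}(\omega)$ for each fixed $t$ then follows immediately from the decomposition $\Phi_\varepsilon(\cdot,t)=c(t)+v_\varepsilon(\cdot,t)$.

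The main obstacle will be the $C^0$ decay step. Constructing a barrier that achieves the sharp rate $r^{-\gamma}$ on the non-compact product $X\times\Sigma$ requires checking that $\gamma$ avoids the indicial roots of the Euclidean Laplacian on $\CC^n/\Gamma$ (for generic $\gamma$ in the range of interest this is automatic, while exceptional values can be treated by a slight perturbation of the weight), together with a careful tracking of the $\varepsilon^{-1}$-dependence through the concavity/comparison step, since the ellipticity constants of $L$ degenerate as $\varepsilon\to 0$. A further subtlety is the mixing of the $X$- and $\Sigma$-directions in the linearized operator near the boundary $t\in\{0,1\}$, which calls for combining weighted estimates in the $X$-direction with standard boundary Schauder estimates in $t$.
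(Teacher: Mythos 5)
Your proposal takes a genuinely different route from the paper. The paper works with $\varphi_\varepsilon = \Phi_\varepsilon - \Psi$, differentiates the Monge--Amp\`ere equation spatially \emph{first} (Step 1), proves a weighted barrier estimate for the derivatives $D_\alpha\varphi_\varepsilon$ at the \emph{initial} rate $r^{-\tau-1}$ (Step 2), iterates between first- and second-derivative estimates to boost the decay to $r^{\max\{-2\gamma-3,-\varsigma-1\}}$ (Steps 3--4), and only at the very end integrates the radial derivative from infinity to extract the $C^0$ decay and the function $c(t)$ (Step 5). You instead want to nail down $c(t)$ first, prove the $C^0$ bound $|v_\varepsilon| = |\Phi_\varepsilon - c(t)| \leq Cr^{-\gamma}$ by a single barrier of size $r^{-\gamma}$, and then bootstrap derivatives. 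This inversion runs into a genuine obstruction.

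The gap is in your claim that the differenced right-hand side ``has spatial decay $r^{-\gamma-2}$ by virtue of \eqref{conupsilon} together with the decay of $\omega-\omega_{euc}$ and of $\Psi$.'' That rate is not available in one step. Under \eqref{conupsilon}, the general hypothesis is only $\varsigma \geq \gamma$, so $\upsilon(\varepsilon) - \upsilon_\infty(\varepsilon) = O(r^{-\varsigma})$ is merely $O(r^{-\gamma})$, two orders short of $r^{-\gamma-2}$. Moreover $\omega - \omega_{euc} = O(r^{-\tau})$; the linear-in-$T$ contributions from $\log\det\Theta_\Psi - \log\det\Theta_\infty$ and from the model cancel (since $T$ lives in the $X$-block and the traces coincide), but the leftover quadratic error is $O(r^{-2\tau})$, which is again slower than $r^{-\gamma-2}$ whenever $\gamma > 2\tau - 2$. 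The paper explicitly assumes (``without loss of generality $\varsigma \geq \gamma > \tau$'') that it is in precisely such a regime, and the entire Step 4 iteration exists because a single barrier pass only yields the intermediate rate $r^{-\tau-1}$ for derivatives, not the final rate. A barrier $Br^{-\gamma}$ therefore cannot be a supersolution for your linearized inequality, and the comparison principle gives at best $|v_\varepsilon| = O(r^{-\min\{\varsigma, 2\tau\} + 2})$, which is too weak to start your bootstrap at the rate you claim. In short, you collapse the iteration into a single step, and the step does not close.

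Two smaller points. First, your explicit formula $c(t) = A(\upsilon_\infty(\varepsilon)-1)t(t-1)$ forces $c(0)=c(1)=0$, which is inconsistent with the boundary data unless the constant parts of $\psi_0,\psi_1 \in \hat{\C}^\infty_{-\gamma}$ vanish; the true $c(t)$ must also carry the linear interpolation of those constants, $(1-t)c_0 + tc_1$. Second, the paper's choice $\varphi_\varepsilon = \Phi_\varepsilon - \Psi$ is not cosmetic: it turns $(E_\varepsilon)$ into $(\Theta_\Psi + dd^c\varphi_\varepsilon)^{n+1} = \upsilon\,\Theta_\Psi^{n+1}$, so every error term comes from $\upsilon$ and from derivatives of $g_\Psi$ in the \emph{same} complex structure, and differentiating the equation kills the nondecaying constant $\log\upsilon_\infty$ automatically. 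Your $v_\varepsilon = \Phi_\varepsilon - c(t)$ forces a comparison against an artificial model $\Theta_\infty$, and in doing so reintroduces the $\omega$-versus-$\omega_{euc}$ discrepancy and the $J$-versus-$J_0$ bookkeeping (which the paper handles in real coordinates precisely because the asymptotic chart need not be $J$-holomorphic). If you want to keep a $C^0$-first strategy, you would need to replace your one-shot barrier with an iterated barrier scheme, starting at the rate that the error terms actually support and then improving, which is in spirit what the paper does for the derivatives.
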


In section \ref{secasymbehavior}, we prove a stronger statement. Let $\varphi_{\varepsilon} = \Phi_{\varepsilon}- \Psi$, then $\varphi_{\varepsilon} \in \HH_{\max\{-2\gamma-2, -\varsigma\}}$ due to the fact that $\Psi$ was chosen to be linear in $t$ (see section \ref{secasymbehavior} for details).

Hence, while we still cannot define the Mabuchi $K$-energy along geodesics, the Mabuchi $K$-energy is now actually well-defined along $\varepsilon$-geodesics assuming $\gamma = 2\tilde{\tau}-2 > 2n-4$. 

In Section \ref{secconvK}, the second derivative of the Mabuchi $K$-energy will be calculated. Throughout section \ref{secconvK}, we assume $\gamma = 2\tilde{\tau}-2 > 2n-4$ (Here it turns out that if $\psi_0$, $\psi_1$ are only in $\HH_{4-2n} (\omega)$, there would be boundary terms at infinity breaking the positivity of the second derivative. This is a new phenomenon compared to Chen \cite{chen2000space} and Auvray \cite{auvray2017space}). However, under the assumption that the Ricci curvature of some reference ALE K\"ahler metric, $\omega$, is non-positive, we can then prove the convexity of Mabuchi $K$-energy:

\begin{mtheorem}\label{partialunique}
    Assume that $\omega$ is an ALE K\"ahler metric on $X$ such that the Ricci curvature of $\omega$ is non-positive, $\Ric (\omega) \leq 0$. Then, along each $\varepsilon$-geodesic in $\HH_{-\gamma}(\omega)$ with $\gamma>2n-4$, $\varphi(t)$, the Mabuchi $K$-energy is convex.
\end{mtheorem}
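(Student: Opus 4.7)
The plan is to adapt to the ALE setting Chen's formula for the second derivative of the Mabuchi K-energy along a smooth path in $\HH_{-\gamma}(\omega)$, apply it to the $\varepsilon$-geodesic $\varphi(t)$, and then exploit $\Ric(\omega) \leq 0$ to show each resulting term has the right sign. Write $\omega_t := \omega + dd^c \varphi(t)$ for brevity.

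First I would derive the standard second-variation identity
\begin{align*}
\frac{d^2\mathcal{M}}{dt^2}(\varphi(t)) = \int_X \bigl|D_{\omega_t}\dot\varphi\bigr|^2_{\omega_t}\,\omega_t^n - \int_X \Bigl(\ddot\varphi - \tfrac{1}{2}|\nabla_{\omega_t}\dot\varphi|^2_{\omega_t}\Bigr) R_{\omega_t}\,\omega_t^n,
\end{align*}
where $D$ is the Lichnerowicz operator, on the non-compact manifold $X$. Each integration by parts must be justified: I would introduce a cutoff function that equals $1$ on $\{r \leq R\}$ and is supported in $\{r \leq 2R\}$, carry out the computation for the truncated integrals, and pass to the limit $R \to \infty$, using the decay estimates from Theorem \ref{epsilongeodesicasymptotics} together with the stronger form $\Phi_\varepsilon - \Psi \in \hat{\C}^\infty_{\max\{-2\gamma-2,-\varsigma\}}$ announced in Section \ref{secasymbehavior}. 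The hypothesis $\gamma > 2n-4$ ensures that the boundary flux over $\{r = R\}$, whose $(2n-1)$-dimensional area grows like $R^{2n-1}$, vanishes in the limit.

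Since along an $\varepsilon$-geodesic $\ddot\varphi - \tfrac{1}{2}|\nabla_{\omega_t}\dot\varphi|^2_{\omega_t} = \varepsilon\,\omega^n/\omega_t^n$ up to the normalization fixed by $\upsilon$ and $A$, the identity reduces to
\begin{align*}
\frac{d^2\mathcal{M}}{dt^2} = \int_X \bigl|D_{\omega_t}\dot\varphi\bigr|^2\,\omega_t^n - \varepsilon\int_X R_{\omega_t}\,\omega^n.
\end{align*}
I would then use the pointwise identity $R_{\omega_t} = \tr_{\omega_t}\Ric(\omega) - \Delta_{\omega_t} u$ with $u = \log(\omega_t^n/\omega^n)$ to split the second integral. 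The Ricci non-positivity hypothesis together with $\omega_t > 0$ gives $\tr_{\omega_t}\Ric(\omega) \leq 0$, hence $-\varepsilon \int_X \tr_{\omega_t}\Ric(\omega)\,\omega^n \geq 0$. For the remaining piece, rewrite $\omega^n = e^{-u}\omega_t^n$ and integrate by parts against $\omega_t$ to obtain $\varepsilon \int_X |\nabla u|^2_{\omega_t}\,\omega^n \geq 0$ (with boundary terms again killed by the decay bookkeeping of the previous paragraph). Summing the three non-negative contributions yields $\ddot{\mathcal{M}}(\varphi(t)) \geq 0$.

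The main obstacle is the rigorous handling of the asymptotic behavior at spatial infinity. One must match the decay rates of $\dot\varphi$, $\ddot\varphi$, their gradients and Laplacians, and of $u = \log(\omega_t^n/\omega^n)$, against the volume growth $r^{2n-1}$ so that every boundary integral genuinely tends to zero. The threshold $\gamma > 2n-4$ is precisely what makes the weakest of these boundary terms vanish; the introduction already warns that for $\psi_0,\psi_1$ only in $\HH_{4-2n}(\omega)$ a non-vanishing flux would appear and spoil convexity. Once this decay bookkeeping is in place, a cutoff-and-limit argument converts the formal identity into the rigorous convexity statement.
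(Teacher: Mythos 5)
Your proposal is correct and follows essentially the same route as the paper: apply the second-variation formula for the $K$-energy (justified by cutting off at $r=R$ and letting $R\to\infty$ using the decay from Theorem~\ref{epsilongeodesicasymptotics}), substitute the $\varepsilon$-geodesic equation to replace $\ddot\varphi - \tfrac12|\nabla\dot\varphi|^2$ by $\varepsilon\,\omega^n/\omega_t^n$, decompose $R_{\omega_t}$ via $\Ric(\omega_t)=\Ric(\omega)-dd^c\log(\omega_t^n/\omega^n)$, and integrate the Laplacian term by parts to get a manifestly non-negative expression. The only cosmetic difference is that you track $u=\log(\omega_t^n/\omega^n)$ while the paper works with $f=\varepsilon\,\omega^n/\omega_\varphi^n=\varepsilon e^{-u}$ and $\log f$; your terms $\varepsilon\int|\nabla u|^2_{\omega_t}\,\omega^n$ and $-\varepsilon\int\tr_{\omega_t}\Ric(\omega)\,\omega^n$ are literally the paper's $\int f^{-1}|\nabla f|^2\,\omega_\varphi^n$ and $-\int f\,\tr_{\omega_\varphi}\Ric(\omega)\,\omega_\varphi^n$ after this substitution, so the two computations are the same.
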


A quick corollary of Theorem \ref{partialunique} is that assuming $\Ric(\omega) \leq 0$, the scalar-flat K\"ahler metric, if it exists, is unique in $\HH_{-\gamma} (\omega)$. However, if there exists a scalar-flat K\"ahler metric $\omega_0$ in $\HH_{-\gamma} (\omega)$, the condition, $\Ric (\omega) \leq 0$ implies $\Ric (\omega) =0$. Hence, the uniqueness of scalar-flat ALE metric can be reduced to the uniqueness result of Ricci-flat ALE K\"ahle metric, which can be found in reference \cite{joyce2000compact, vancoevering2010regularity, conlon2013asymptotically}. The point is that $\omega_0 = \omega + O(r^{-\gamma-2})$ implies by definition that the ADM masses of $\omega$ and $\omega_0$ are equal, $\m(\omega) = \m (\omega_0)$. According to mass formula by Hein-LeBrun \cite{hein2016mass}, it follows that $\int R(\omega) = \int R (\omega_0) =0$. The assumption that $\Ric (\omega) \leq 0$ implies that $\Ric (\omega) =0$ (see Remark \ref{remunique} for details). In fact, in Section \ref{secnonexist}, we will prove that many ALE K\"ahler manifolds do not admit any ALE K\"ahler metrics with $\Ric \leq 0$ (or $\Ric \geq 0$) at all:

\begin{mtheorem}\label{nonex}
    Let $\OO(-k)$ be the standard negative line bundle over $\CC\PP^{n-1}$ with $n \geq 2$, $k \ne n$, 
    and let $\omega$ be an ALE K\"ahler metric on $\OO(-k)$ with decay rate $-\tau$, $\tau > 0$. Then, the Ricci form of $\omega$, is of mixed type, i.e., neither $\Ric (\omega) \geq 0$ nor $\Ric (\omega) \leq 0$ is true.
\end{mtheorem}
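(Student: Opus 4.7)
The plan is to assume for contradiction that the Ricci form $\rho = \rho(\omega)$ is semi-definite, say $\rho \geq 0$ (the opposite sign is handled symmetrically by applying the argument to $-\rho$), and to deduce $\rho \equiv 0$. This will contradict $[\rho]/(2\pi) = c_1(\OO(-k)) = (n-k)h$ in $H^2(\OO(-k);\ZZ)$, where $h$ is the pullback of the hyperplane class from $\CC\PP^{n-1}$: by hypothesis $n-k \neq 0$, and $h$ is a generator.

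First I would show $\rho$ vanishes on every fiber $F = \pi^{-1}(z)$ of $\pi : \OO(-k) \to \CC\PP^{n-1}$. Since $F \cong \CC$ is contractible, $K_X^{-1}|_F$ is holomorphically trivial; one can trivialize by the Euclidean polyvector $\partial_{z_1} \wedge \cdots \wedge \partial_{z_n}$ at infinity on $F$, with respect to which the induced Hermitian norm converges to a positive constant and its logarithm has derivatives of order $O(r^{-\tau-1})$ by ALE. Writing $\rho|_F = -i\partial\bar\partial \log|e|^2_h$ and applying Stokes on a disc $D_R \subset F$ yields a boundary term of order $R \cdot R^{-\tau-1} = O(R^{-\tau}) \to 0$, so $\int_F \rho|_F = 0$; combined with $\rho|_F \geq 0$ this forces $\rho|_F \equiv 0$.

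Next, since $\rho$ is semi-positive and $\rho(V, \bar V) = 0$ for every vertical $V \in T^{1,0}X$, the Cauchy--Schwarz inequality $|\rho(V, \bar W)|^2 \leq \rho(V,\bar V)\rho(W,\bar W)$ gives $\rho(V, \bar W) = 0$ for all $W$, so $\iota_V \rho = 0$. Combined with $d\rho = 0$ this yields $\LL_V \rho = 0$, so $\rho$ is basic with respect to the fibration $\pi$, which has connected fibers; hence $\rho = \pi^*\sigma$ for some smooth semi-positive $(1,1)$-form $\sigma$ on $\CC\PP^{n-1}$.

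The last step, which is also the main obstacle, is to eliminate $\sigma$ using the ALE decay. Using the cone decomposition $g_{\mathrm{euc}} = dr^2 + r^2(\pi^* g_{FS} + \alpha^2)$ on $\CC^n \setminus \{0\}$ together with $g_X = g_{\mathrm{euc}} + O(r^{-\tau})$, a $g_{FS}$-unit horizontal lift $\tilde{v}$ at a point $x$ of the end has $|\tilde{v}|_{g_X} = r(x)(1 + O(r^{-\tau}))$. Consequently the eigenvalues of $g_X^{-1}\pi^*\sigma$ on the horizontal distribution equal $r(x)^{-2}(1+O(r^{-\tau}))$ times those of $g_{FS}^{-1}\sigma$ at $\pi(x)$, giving the key identity
\[|\rho|_{g_X}(x) = r(x)^{-2}|\sigma|_{g_{FS}}(\pi(x))\bigl(1 + O(r(x)^{-\tau})\bigr).\]
Combined with the ALE bound $|\rho|_{g_X} = O(r^{-\tau-2})$ and the fact that for every $z \in \CC\PP^{n-1}$ there is a sequence $x_j \in \pi^{-1}(z)$ with $r(x_j) \to \infty$, one obtains $|\sigma|_{g_{FS}}(z) \leq C r(x_j)^{-\tau} \to 0$, forcing $\sigma \equiv 0$ and hence $\rho \equiv 0$, which is the desired contradiction. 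The crucial point is that the rescaling factor $r^{-2}$ between $|\pi^*\sigma|_{g_X}$ and $|\sigma|_{g_{FS}}$ is precisely what makes the mere assumption $\tau > 0$ enough to force the vanishing of the pulled-back form; checking this identity (together with its ALE error terms) is the analytic heart of the argument.
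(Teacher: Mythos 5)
There is a genuine gap in Step~1, and it propagates through the rest of the argument. The claim $\int_F\rho|_F=0$ is false; one can check that in fact $\int_F\rho|_F=\tfrac{2\pi(k-n)}{k}$, which vanishes only for $k=n$. The error is that $\partial_{z_1}\wedge\cdots\wedge\partial_{z_n}$ is not a holomorphic trivialization of $K_X^{-1}|_F$ over all of $F$: it is only defined on the end $X\setminus K$ (and, when $k\nmid n$, not even there, since the $\ZZ_k$-action $z\mapsto\zeta z$ scales it by $\zeta^{-n}$), so the potential $-\log\det(g_z)$ for $\rho$ is not defined on the full disc $D_R\subset F$ and Stokes on $D_R$ is illegitimate. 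A valid global trivialization on $F$ is the chart frame $e_{u_0}=\partial_{u_0^1}\wedge\cdots\wedge\partial_{u_0^{n-1}}\wedge\partial_{u_0}$ on $U_0\supset F$. The Jacobian of the coordinate change satisfies $|J|^2=\tfrac{1}{k^2}|u_0|^{2(n-k)/k}$, so on $F$
\begin{align*}
\log\det(g_{u_0})\big|_F=\log|J|^2+\log\det(g_z)\big|_F=\tfrac{2(n-k)}{k}\log|u_0|+O(1),
\end{align*}
which grows logarithmically rather than converging. The Stokes boundary integral therefore does not vanish: the $O(r^{-\tau})$ decay of $\log\det(g_z)$ contributes nothing (your estimate of that piece is correct), but the $\log|u_0|$ term contributes the residue $\tfrac{2\pi(k-n)}{k}\neq 0$.

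Because the fiber integral of $\rho$ is a nonzero constant of definite sign when $k\neq n$, a semi-definite $\rho$ does \emph{not} vanish on the fibers, so Steps~2--3 (Cauchy--Schwarz, $\rho=\pi^*\sigma$ basic, $\sigma$ forced to zero by the $r^{-2}$ scaling) never get started and the contradiction with $c_1\neq 0$ is never reached. Note also that, even correctly computed, the single fiber integral only rules out one sign of $\rho$ (the one opposite to $\operatorname{sgn}(k-n)$), not both; a second test cycle is needed. This is precisely what the paper does: it compactifies to the $\CC\PP^1$-bundle $M_k$ and pairs $\rho^{n-1}$ against the two $(n-1)$-cycles $D_0$ and $D_f$, whose intersections with $c_1^{n-1}$ have opposite signs, giving the contradiction for both signs at once. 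Your scaling observation in Step~3 is correct and interesting, but it cannot be reached here because $\rho|_F\equiv 0$ fails.
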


In Riemannian geometry, AE metrics of negative Ricci curvature are well-known to exist in $\RR^n$  by explicit construction in Lohkamp \cite{Lohkamp1994}. Theorem \ref{nonex} gives a negative answer to this question in the setting of ALE K\"ahler metrics.

An interesting question in this context is to ask whether some version of the Nonexistence Theorem \ref{nonex} holds in general ALE K\"ahler manifolds or even AC K\"ahler manifolds.

\begin{qs}
    Is it true in any ALE K\"ahler manifold that the Ricci curvature form of an ALE K\"ahler metric can only be identically zero or of mixed type?
\end{qs}

This paper is a part of the Ph.D. thesis of the author. The author would like to express his gratitude to Professor Hans-Joachim Hein and Professor Bianca Santoro for suggesting the problem, and for constant support, many helpful comments, as well as much enlightening conversation. The author is also thankful to professor Gustav Holzegel for providing financial support via his Alexander von Humboldt Professorship during the last semester at University of Münster.
The whole project is Funded by the DFG under Germany's Excellence Strategy EXC 2044-390685587, Mathematics Münster: Dynamics-Geometry-Structure, and by the CRC 1442, Geometry: Deformations and Rigidity, of the DFG.

\section{$\varepsilon$-geodesic equations and openness} \label{sectionopen}
Recall  that $\varepsilon$ geodesic equations can be written as follows, 
\begin{align*}
    (E_{\varepsilon}) \qquad 
    \begin{cases}
    (\Theta + dd^c \widetilde{\Phi} )^{n+1} = \upsilon(\varepsilon) (\Theta+ dd^c \Psi)^{n+1}, \quad &\text{ in } X \times \Sigma,\\
    \lambda\Theta < \Theta + dd^c \widetilde{\Phi} < \Lambda \Theta, & \text{ in } X \times \Sigma,\\
     \widetilde{\Phi}|_{t =0,1} = \psi_{0,1}, & \text{ on } X \times \partial \Sigma,
    \end{cases}
\end{align*}
where $\varepsilon \in (0,1]$ and $0 < \lambda < \Lambda$ are constants depending on $\varepsilon$. The family of equations $(E_\varepsilon)$ is called the $\varepsilon$-geodesic equations. 
The idea to solve the equation $(E_\varepsilon)$ is the following. Firstly, we apply the continuity method to show that there exists a solution of $(E_\varepsilon)$ in $\C^{k,\alpha}$. In particular, consider the family of equations $(E_{s})$, $s \in [\varepsilon,1]$. There is a trivial solution at $(E_1)$. Then, we shall prove the openness and closedness of $(E_s)$ in certain regularity. In the current section, we deal with the openness of $(E_s)$.

Assuming that there exists a solution of $(E_{s_0})$ in $\C^{k,\alpha}$ for some $s_0 \in [\varepsilon,1]$, we will show in this subsection that $(E_s)$ can be solved for all $s$ in a small open neighborhood of $s_0$. For simplicity, we write $\Theta_\Psi = \Theta +dd^c \Psi$ as in Theorem \ref{muniformc11} and $\widetilde{\varphi} = \widetilde{\Phi}- \Psi$. Then, the equation $ (E_\varepsilon)$ can be written as, $ (\Theta_\Psi + dd^c \widetilde{\varphi})^{n+1} = \varepsilon \Theta_{\Psi}^{n+1}$ in $X\times \Sigma$,
with the boundary condition $\widetilde\varphi =0$ on $X \times \partial \Sigma$. Then, the Monge-Amp\`{e}re operator is defined to be
\begin{align*}
    \mathcal{M} (\chi) = \frac{(\Theta_\Psi + dd^c \chi)^{n+1}}{\Theta_{\Psi}^{n+1}}.
\end{align*}
Let $\widetilde\varphi$ be a solution of $(E_{s_0})$ for some $s_0 \in [\varepsilon, 1]$. By assumption, $\widetilde\varphi$ is $\Theta_\Psi$-plurisubharmonic satisfying $c\Theta \leq \Theta_\Psi + dd^c \widetilde\varphi \leq C \Theta$. Then, the linearization of Monge-Amp\`{e}re operator at $\widetilde\varphi$ is uniformly elliptic, and is given by
\begin{align*}
    \LL_{\widetilde\varphi} (\chi ) =\big( \Delta_{{\widetilde\varphi}} \chi \big) \cdot \frac{(\Theta_\Psi + dd^c \widetilde\varphi)^{n+1}}{\Theta_{\Psi}^{n+1}} = s_0 \Delta_{\widetilde\varphi} \chi,
\end{align*}
where $\Delta_{\widetilde\varphi}$ represents the Laplacian with respect to $\Theta_\Psi + dd^c \widetilde\varphi$. Let $(\C^{k,\alpha})_0$ be the functions in $\C^{k,\alpha}$ vanishing on the boundary $X \times \partial \Sigma$. Then, we have the following property of $\LL_{\widetilde\varphi}$, from which the desired openness is clear by the implicit function theorem.

\begin{pro} \label{isolinMA}
Let $\widetilde\varphi$ be the solution of $(E_{s_0})$, then the linearized operator $\LL_{\widetilde\varphi} : ({\C}^{k,\alpha})_0 \rightarrow {\C}^{k-2,\alpha}$ is an isomorphism for all integers $k \geq 2$ and $\alpha\in (0,1)$.
\end{pro}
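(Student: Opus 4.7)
By the pointwise two-sided bound $c\Theta \leq \Theta_\Psi + dd^c\widetilde{\varphi} \leq C\Theta$ inherent to the solution class of $(E_{s_0})$, the operator $\LL_{\widetilde{\varphi}} = s_0 \Delta_{\widetilde{\varphi}}$ is a smooth, uniformly elliptic linear operator on $X \times \Sigma$ whose coefficients are asymptotic at spatial infinity to those of a product Laplacian on $(\CC^n/\Gamma) \times \Sigma$ built from the flat Kähler metric on $\CC^n/\Gamma$ and $A\,dd^c t(t-1)$ on $\Sigma$. My plan is to establish injectivity and surjectivity separately; the openness in the continuity method is then immediate from the implicit function theorem. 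The central ingredient in both steps will be the barrier $b(t) := t(1-t)/2$: in the complex coordinate $w = t+is$ on $\Sigma$ one computes $\partial_w\partial_{\bar w} b = -1/4$, so by the uniform positivity of the $(w,\bar w)$-component of the inverse metric of $\Theta_\Psi + dd^c\widetilde{\varphi}$ (itself a consequence of the strict positivity of the $(w,\bar w)$-component of $\Theta$), one obtains $\Delta_{\widetilde{\varphi}} b \leq -c_0 < 0$ uniformly on $X \times \Sigma$.

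For injectivity, I would take $u \in (\C^{k,\alpha})_0$ with $\Delta_{\widetilde{\varphi}} u = 0$ and aim to show $u \equiv 0$. Since $\widetilde{\varphi}$ is lifted from a function of $(x,t)$ alone, the coefficients of $\Delta_{\widetilde{\varphi}}$ are $S^1$-invariant in $s$, and one can Fourier-decompose $u(x,t,s) = \sum_{k\in\ZZ} u_k(x,t)\,e^{iks}$. For $k\ne 0$ the equation for $u_k$ on $X \times [0,1]$ carries a strictly positive zeroth-order term proportional to $|k|^2 g^{s\bar s}$, so a direct maximum-principle argument with $b$ as barrier yields $u_k \equiv 0$. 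For $k=0$, the asymptotic product structure at spatial infinity together with a further $\sin(j\pi t)$-expansion reduces matters to ruling out bounded solutions of shifted Laplacians $\Delta_{\CC^n/\Gamma} a_j = (j\pi)^2 a_j$ with strictly positive eigenvalue, which is standard spectral theory; a compactness/perturbation argument then handles the deviation from the model.

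For surjectivity, given $f \in \C^{k-2,\alpha}$, I would exhaust $X$ by smooth compact domains $X_R = \{r \leq R\}$ and solve the classical Dirichlet problem
\begin{equation*}
s_0 \Delta_{\widetilde{\varphi}} u_R = f \quad\text{in }X_R \times \Sigma, \qquad u_R = 0 \quad\text{on }\partial(X_R \times \Sigma),
\end{equation*}
obtaining a unique $u_R \in \C^{k,\alpha}(\overline{X_R \times \Sigma})$ by classical Schauder theory on relatively compact smooth domains. The comparison function $w := \|f\|_{L^\infty}(s_0 c_0)^{-1}\, b(t)$ satisfies $w \pm u_R \geq 0$ on the full boundary and $\Delta_{\widetilde{\varphi}}(w \pm u_R) \leq 0$, so the maximum principle gives $\|u_R\|_{L^\infty} \leq C \|f\|_{L^\infty}$ uniformly in $R$. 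Interior and boundary Schauder estimates then bound $\|u_R\|_{\C^{k,\alpha}}$ on any compact subset of $X \times \overline\Sigma$ in terms of $\|f\|_{\C^{k-2,\alpha}}$, uniformly in $R$; Arzel\`a--Ascoli plus a diagonal extraction yields a limit $u \in (\C^{k,\alpha})_0$ solving $\LL_{\widetilde{\varphi}} u = f$.

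The main obstacle will be the uniform $L^\infty$ control on $u_R$ as $R\to\infty$, and symmetrically the exclusion of bounded kernel elements for $\LL_{\widetilde{\varphi}}$. Both are delivered by the thin-direction barrier $b(t) = t(1-t)/2$, whose strictly negative Laplacian leverages the fact that the Dirichlet conditions on $X\times\partial\Sigma$ are effective throughout $X$, not merely in a compact region. Notably, no decaying barrier on $X$ itself is needed: the compactness of $\Sigma$ in the $t$-direction, combined with the Dirichlet conditions at both ends, makes this Dirichlet problem qualitatively simpler than the Dirichlet problem on $X$ alone would be.
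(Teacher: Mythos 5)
Your surjectivity argument is essentially the paper's: exhaust $X\times\Sigma$ by smooth compact domains, solve the classical Dirichlet problem on each, get a uniform $L^\infty$ bound via the $t$-barrier (the paper uses $A\,t(1-t)$ with $A = C_0/\lambda$; your $\|f\|_{L^\infty}(s_0 c_0)^{-1}b(t)$ is the same device), apply interior and boundary Schauder estimates uniformly in $R$, and pass to a subsequential limit. That half is sound, and also a useful observation is that $b$ can replace the function $v$ with $\Delta_{\widetilde g} v = -1$ that the paper produces via surjectivity.

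The injectivity half has a genuine gap, and it is exactly what your final remark waves away. The barrier $b(t)$ gives $\Delta_{\widetilde\varphi}b \leq -c_0 < 0$, but it does \emph{not} make the supremum of $u - \epsilon b$ attained: $b$ stays bounded as $r \to\infty$, so on the noncompact manifold $X\times\Sigma$ there is no reason for a bounded kernel element $u$ to realize its supremum anywhere. The paper's mechanism for this is Yau's generalized maximum principle (Lemma~\ref{openmax1}): perturb by $\epsilon r$, with $r$ the radial function of $X$, so that $u-\epsilon r\to -\infty$ spatially and hence attains a maximum; let $\epsilon\to 0$ to obtain a sequence of approximate maxima where $|\nabla u| \to 0$ and $\limsup\Delta_{\widetilde g}u \leq 0$; then the strictly negative $\Delta_{\widetilde g}b$ (or, in the paper, the function $v$) gives the contradiction. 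Your phrase ``no decaying barrier on $X$ itself is needed'' reflects a correct intuition that there is no issue at the $\Sigma$-boundary, but the spatial noncompactness is still very much present and must be dealt with; the $\epsilon r$ perturbation, while not a ``decaying barrier,'' is the $X$-direction ingredient you are implicitly claiming to avoid. Your proposed substitute, the Fourier route, does not repair this: it requires $S^1$-invariance of $\widetilde\varphi$, which is not part of the hypothesis of the proposition; the subsequent $\sin(j\pi t)$ separation requires $\Theta_\Psi + dd^c\widetilde\varphi$ to be a product metric on $X\times\Sigma$, which it is not (there are mixed $g^{\alpha\bar w}$ terms); and the remaining ``compactness/perturbation argument'' for the $k=0$ modes is not a reduced problem — it still carries the unbounded spatial direction that is the entire difficulty. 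Dropping the Fourier decomposition in favor of proving the generalized maximum principle directly (as Lemma~\ref{openmax1} does), then combining it with your barrier $b$ as in Lemma~\ref{openmax2}, repairs the argument and matches the paper.
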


\begin{proof}
Let us first prove the surjectivity. Fixing $f \in {\C}^{k-2,\alpha}$, the exhaustion argument will be applied to solve the equation $\LL_{\widetilde\varphi} u =f$. Take an exhaustive sequence of pre-compact sets, $\Omega_k \subseteq X \times \Sigma$, with smooth boundary. In particular, by taking a sequence of subsets, $B_{r_k} \times \Sigma$ where $B_{r_k} = \{x\in X:r(x) \leq r_k\}$, and smoothing the corners, we can obtain the exhaustive sequence $\{\Omega_k\}$. Then, we can solve the following Dirichlet problems,
\begin{align*}
    (L_k) \quad \begin{cases}
    \LL_{\widetilde\varphi} u_k = f \quad & \text{ in } \Omega_k,\\
    u_k = 0 & \text{ on } \partial \Omega_k,
    \end{cases}
\end{align*}
where $f \in {\C}^{k-2, \alpha}$.
The existence of the solution of $(L_k)$ is a classic result of the Dirichlet problem on compact Riemannian manifolds with boundary. The key to complete the proof is to give the uniform estimates of $u_k$. The main idea to show the $\C^0$ uniform estimates is to construct barrier functions. Consider the function $A t(1-t) $. The fact that $\lambda \Theta \leq \Theta_\Psi + dd^c \widetilde\varphi \leq \Lambda \Theta $ implies $\Delta_{\widetilde\varphi}  A t(1-t) \leq -\lambda A$. If we suppose that $\|f\|_{L^\infty} \leq C_0$ and take $A = C_0/\lambda$, then we have $\Delta_{\widetilde\varphi} A t(1-t) \leq f = \Delta_{\widetilde\varphi} u_k$. Combining with the fact that $ A t (1-t ) \geq 0$ on the boundary $\partial \Omega_k $, the maximum principle implies that,
\begin{align} \label{linc0}
    \|u_k\|_{L^\infty} \leq \frac{C_0}{\lambda} t(1-t) \leq \frac{C_0}{4\lambda}.
\end{align}
The uniform $\C^{k,\alpha}$ estimates follows directly from the standard Schauder estimates. Precisely, for interior points  $p \in \Omega_k$ away from the boundary, we pick a pair of balls centered at $p$, $ B_{\frac{1}{4}} (p) \subseteq B_{\frac{1}{2}} (p) \subset \Omega_k $. Then, the interior Schauder estimates implies that $\|u_k\|_{k,\alpha; B_{\frac{1}{16}}(p)} \leq C (\|u_k \|_{L^\infty (B_{\frac{1}{8}}(p))} + \|f\|_{k-2, \alpha; B_{\frac{1}{8}}(p)})$. If $p \in \Omega_k$ is close to the boundary, we can apply the boundary Schauder estimate. After straightening the boundary in case the boundary portion on $\partial \Omega_k$ is not flat, we can pick half balls, $p \in B^+_{\frac{1}{4}}(q) \subseteq B^+_{\frac{1}{2}}(q) $ for some $q \in \partial \Omega_k$. Together with the interior estimates, we have
\begin{align} \label{linck}
    \|u_k\|_{k,\alpha; \Omega_k} \leq C(\|u_k \|_{L^\infty(X\times \Sigma)} + \|f\|_{k-2,\alpha; X\times \Sigma}),
\end{align}
where $C$ depends only on $n,k,\alpha,\lambda,\Lambda$. After passing to a subsequence, we conclude that the limit function, $u$, satisfies $\LL_{\widetilde\varphi} u =f $ in $X \times \Sigma$ and $u \equiv 0 $ on $X \times \partial \Sigma$. The uniqueness directly follows from the following maximum principle, Lemma \ref{openmax2}. 
\end{proof}

The following lemma comes from Yau's generalized maximum principle, referring to \cite{cheng1980existence, wu2008kahler}. To describe the model metric on $X \times \Sigma$, we introduce the asymptotic coordinates of $ X \times \Sigma$. Let $\{z_1, \ldots, z_n\}$ be asymptotic coordinates of the end of $X$ and let $w= t+is$ be the complex coordinate of $\Sigma$. Real asymptotic coordinates are given by $\{x_1, \ldots, x_{2n}, x_{2n+1} = t, x_{2n+2} = s\}$, where the complex coordinates are written as $z_{i} = x_{2i-1} + ix_{2i}$. The asymptotic coordinate system will be applied to describe the asymptotic behavior of prescribed K\"ahler metrics on $X\times \Sigma$.

\begin{lem} \label{openmax1} Let $(X\times \Sigma, \Theta_{\widetilde\Phi})$ be the noncompact K\"ahler manifold as above with the K\"ahler metric $\widetilde{g}$ associated with $\widetilde\Phi$ satisfying, for some uniform constant $0 < \lambda < \Lambda$,
\begin{align*}
    \lambda\delta_{ij} \leq \widetilde{g}_{ij} \leq \Lambda \delta_{ij}
\end{align*}
in the asymptotic coordinates of $X\times \Sigma$. Let $u$ be a  $\C_{loc}^2$ function bounded from above on $X \times \Sigma$. Suppose that $\sup_{X \times \Sigma}u > \sup_{X \times \partial \Sigma} u$, then there exists a sequence $\{x_k\}$ in $X \times \Sigma^\circ$ such that
\begin{align} \label{openmax1inq}
    \lim_{k \rightarrow \infty} u(x_k) = \sup_{X\times \Sigma} u, \quad  \lim_{k \rightarrow \infty}|du(x_k)|_{\widetilde{g}} = 0, \quad \limsup_{k\rightarrow \infty} \Delta_{\widetilde{g}} u (x_k) \leq 0.
\end{align}
\end{lem}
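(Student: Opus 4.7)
The plan is to implement the classical Omori--Yau type perturbation argument, adapted to the product setting by constructing an explicit exhaustion function tailored to the ALE end. Specifically, I will produce a smooth $\rho\colon X\times\Sigma\to[0,\infty)$ that depends only on the $X$-coordinates, tends to infinity at the spatial end of $X$, and satisfies $|d\rho|_{\widetilde g} + |\Delta_{\widetilde g}\rho|\le C$ for some uniform constant $C$. On the ALE end, in the asymptotic coordinates $\{x_1,\dots,x_{2n}\}$, the natural choice is $\rho = (1+|x|^2)^{1/2}$, smoothly interpolated to a bounded function on a compact core. A direct coordinate computation gives $|\partial\rho|\le 1$ and $|\partial^2\rho|=O(\rho^{-1})$, which together with the uniform equivalence $\lambda\delta\le\widetilde g\le\Lambda\delta$ and the boundedness of the Christoffel symbols of $\widetilde g$ in the asymptotic frame yields the desired bounds. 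Since $\Sigma\cong[0,1]\times S^1$ is essentially compact, $\rho$ does not need to grow in the $\Sigma$-directions.

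With $\rho$ in hand, the next step is the standard perturbation argument. Set $M=\sup_{X\times\Sigma} u$ and $M'=\sup_{X\times\partial\Sigma}u$, and let $\delta=(M-M')/2>0$. For each integer $k\ge 1$, pick $y_k\in X\times\Sigma$ with $u(y_k)>M-1/k$, and choose $\epsilon_k>0$ so small that $\epsilon_k\rho(y_k)<1/k$ and $\epsilon_k\sup_{\partial}\rho<\delta$. Consider $u_{\epsilon_k}:=u-\epsilon_k\rho$. Since $u$ is bounded above and $\rho\to\infty$, $u_{\epsilon_k}\to-\infty$ at infinity, and its supremum on $\overline{X\times\Sigma}$ is attained. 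The choice of $\epsilon_k$ ensures this supremum strictly exceeds the boundary supremum, so it is realized at an interior point $x_k\in X\times\Sigma^\circ$.

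At $x_k$ the first-order condition $du_{\epsilon_k}(x_k)=0$ gives $|du(x_k)|_{\widetilde g}=\epsilon_k|d\rho(x_k)|_{\widetilde g}\le C\epsilon_k\to 0$. The Hessian of $u_{\epsilon_k}$ at $x_k$ is negative semi-definite, so computing in asymptotic coordinates and using that the Christoffel contributions involving $du_{\epsilon_k}(x_k)$ vanish, one gets
\[
\Delta_{\widetilde g}u(x_k) \;=\; \widetilde g^{\alpha\beta}\partial_\alpha\partial_\beta u_{\epsilon_k}(x_k) + \epsilon_k\Delta_{\widetilde g}\rho(x_k) \;\le\; \epsilon_k\Delta_{\widetilde g}\rho(x_k)\;\le\; C\epsilon_k,
\]
hence $\limsup_k \Delta_{\widetilde g}u(x_k)\le 0$. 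Finally, $M\ge u(x_k)\ge u_{\epsilon_k}(x_k)\ge u_{\epsilon_k}(y_k)>M-2/k$ forces $u(x_k)\to M$, completing the verification of \eqref{openmax1inq}.

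The main obstacle is producing $\rho$ with a uniformly bounded Laplacian with respect to $\widetilde g$: this requires a bound on the Christoffel symbols of $\widetilde g$ in the asymptotic coordinates that is not immediate from the hypothesis $\lambda\delta\le\widetilde g\le\Lambda\delta$ alone. In the intended applications, $\widetilde g=\Theta_{\widetilde\Phi}$ is a smooth solution of the $\varepsilon$-geodesic equation and $\widetilde\Phi$ is a perturbation of $\Psi$ that decays on the end, so the decay of $\omega$ on the ALE end together with standard elliptic regularity for $\widetilde\Phi$ supply the needed control on $\partial\widetilde g$ and hence on the Christoffels; this point should be flagged in the statement or its application.
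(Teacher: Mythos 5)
Your argument is the standard Omori--Yau perturbation scheme and is essentially the same as the paper's: subtract a small multiple of a linear-growth exhaustion function $\rho$ with $|d\rho|_{\widetilde g}+|\Delta_{\widetilde g}\rho|\le C$, find the interior maximum $x_k$ of $u-\epsilon_k\rho$, and pass to the limit $\epsilon_k\to 0$. Your tracking of the choice of $\epsilon_k$ to ensure $u(x_k)\to\sup u$ and to keep the maximum off $X\times\partial\Sigma$ is correct and slightly more explicit than the paper's exposition.

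Regarding the concern you flag at the end: you are right that $\lambda\delta\le\widetilde g\le\Lambda\delta$ alone does not control the Christoffel symbols of $\widetilde g$ in the real asymptotic coordinates, and that this is where a reader must pause. However, the remedy you propose --- invoking elliptic regularity for $\widetilde\Phi$ to control $\partial\widetilde g$ --- is not what makes the paper's estimate \eqref{radialf} tick, and is unnecessarily heavy. The clean mechanism is the K\"ahler identity: since $\widetilde g$ is K\"ahler, $\Delta_{\widetilde g}\rho=\tr_{\widetilde g}\!\left(dd^c\rho\right)$, which involves no Christoffel symbols of $\widetilde g$ at all. Hence $|\Delta_{\widetilde g}\rho|\le C\,|dd^c\rho|_\delta$ already follows from $\widetilde g\ge\lambda\delta$, provided only that the $(1,1)$-form $dd^c\rho$ is uniformly bounded with respect to the Euclidean background. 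That last condition is a statement about the radial function $\rho$ and about the complex structure $J$ of $X\times\Sigma$ (i.e.\ that $J$ is close to the Euclidean $J_0$ on the end), and is independent of $\widetilde\Phi$; it is this that the paper silently uses in asserting \eqref{radialf}. So the step is genuinely dependent on the ALE K\"ahler geometry of $(X,J)$, but not on regularity of the particular solution $\widetilde\Phi$. With that small correction your proof is complete and matches the paper's.
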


\begin{proof} Let $r$ be the radial function inherited from the asymptotic chart of $X$, for instance, $r = (\sum_{i=1}^n|z_i|^2)^{1/2}$. The radial function can be extended to a non-negative smooth function in the whole space $X \times \Sigma$ satisfying the estimate
\begin{align} \label{radialf}
    |\nabla_{\widetilde{g}} r|_{\widetilde{g}} \leq C, \qquad  |\Delta_{\widetilde{g}} r|\leq C,
\end{align}
for some uniform constant $C$. Consider the function $u_\mathbf{e} = u- \mathbf{e} r$. Since $u_\mathbf{e}$ tends to negative infinity as $r$ goes to infinity, $u_\mathbf{e}$ achieves its maximum at some point $x_\mathbf{e}$. And $x_\mathbf{e}$ must be an interior point in $X \times \Sigma$ based on the assumption that $\sup_{X \times \Sigma} u > \sup_{X \times \partial \Sigma} u$. 
At $x_\mathbf{e}$, the function $u_\mathbf{e}$ satisfies 
\begin{gather*}
   0=  d u_\mathbf{e} (x_\mathbf{e}) = d u (x_\mathbf{e}) - \mathbf{e} d r (x_\mathbf{e}),\\
   0 \geq  \Delta_{\widetilde{g}} u_\mathbf{e} (x_\mathbf{e}) = \Delta_{\widetilde{g}} u(x_\mathbf{e}) - \mathbf{e} \Delta_{\widetilde{g}} r (x_\mathbf{e})
\end{gather*}
and 
\begin{align*}
    u_{\mathbf{e}} (x_\mathbf{e}) \geq u(x) - \mathbf{e} r(x),\quad \text{ for all } x \in X \times \Sigma.
\end{align*}
 Choosing $\{x_k\}$ to be points achieving the maximum of $u_{1/k}$, then combining with \eqref{radialf} and letting $k$ go to infinity, we complete the proof of \eqref{openmax1inq}.
\end{proof}

The following lemma is a strengthened version of the above maximum principle, based on solving the Dirichlet problem in $X \times \Sigma$.

\begin{lem} \label{openmax2}
Let $(X\times \Sigma, \widetilde{g})$ be the same as in Lemma \ref{openmax1}. Suppose that $u$ is a function in $\C^{2}_{loc} (X\times \Sigma)$ and bounded from above. Suppose that $u$ satisfies $\Delta_{\widetilde{g}} u \geq 0$ in $X \times \Sigma$ and $ u \leq 0 $ on $X \times \partial\Sigma$. Then $u \leq 0$ in $X\times \Sigma$.
\end{lem}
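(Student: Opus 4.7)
The plan is to argue by contradiction, using the function $h(t) := t(1-t)$ on $X \times \Sigma$ as a uniformly strictly superharmonic barrier and then invoking the generalized maximum principle of Lemma~\ref{openmax1}. Suppose for contradiction that $M := \sup_{X \times \Sigma} u > 0$. Since $u \leq 0$ on $X \times \partial\Sigma$, there is an interior point $x_0 \in X \times \Sigma^\circ$ with $u(x_0) > M/2$.

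The first step is the barrier estimate. The same function $h$ already appears as a $C^0$ subsolution in the proof of Proposition~\ref{isolinMA}; what is needed here is the uniform strict inequality $\Delta_{\widetilde g} h \leq -c < 0$ for some constant $c = c(\Lambda) > 0$. Using the Kähler formula $\Delta_{\widetilde g} f = 2\,\widetilde g^{i\bar j}\partial_i \bar\partial_j f$ and the fact that $h$ depends only on $t = \operatorname{Re} w$, a direct computation gives $\Delta_{\widetilde g} h = -\widetilde g^{w\bar w}$; the hypothesis $\widetilde g_{ij} \leq \Lambda\delta_{ij}$ of Lemma~\ref{openmax1} then forces $\widetilde g^{w\bar w} \geq \Lambda^{-1}$, so $c = \Lambda^{-1}$ works. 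It is important here that the Kähler Laplacian involves no Christoffel symbols, since we have no a priori pointwise control on the derivatives of $\widetilde g$.

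The second step is the contradiction. Fix $\delta \in (0, 2M)$ and set $\tilde u := u - \delta h$. Then $\tilde u$ is bounded above (since $u$ is and $h \geq 0$), we have $\tilde u \leq u \leq 0$ on $X \times \partial\Sigma$, and $\tilde u(x_0) > M/2 - \delta/4 > 0$, so that $\sup_{X\times\Sigma} \tilde u > 0 \geq \sup_{X\times \partial\Sigma} \tilde u$. Lemma~\ref{openmax1} applied to $\tilde u$ produces a sequence $\{x_k\} \subset X \times \Sigma^\circ$ with $\limsup_{k\to\infty} \Delta_{\widetilde g} \tilde u(x_k) \leq 0$. But the hypotheses $\Delta_{\widetilde g} u \geq 0$ and $\Delta_{\widetilde g} h \leq -c$ give
\begin{equation*}
  \Delta_{\widetilde g} \tilde u \;=\; \Delta_{\widetilde g} u \;-\; \delta\,\Delta_{\widetilde g} h \;\geq\; \delta c \;>\; 0
\end{equation*}
pointwise on $X \times \Sigma$, contradicting the previous inequality. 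Hence $M \leq 0$, i.e.\ $u \leq 0$.

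The only delicate point, and thus the main obstacle, is securing the uniform strict sign $\Delta_{\widetilde g} h \leq -c$ from the two-sided pointwise bound on $\widetilde g$ alone; the rest is a clean application of the generalized maximum principle, and the strip direction of $\Sigma$ is exactly what makes such a bounded strictly superharmonic barrier available in the first place.
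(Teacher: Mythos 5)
Your proof is correct, and it takes a genuinely different — and in fact cleaner — route than the paper's. The paper assumes $\sup u \geq \delta > 0$, then invokes the \emph{existence} part of Proposition~\ref{isolinMA} (proved there by exhaustion and barrier comparison) to produce a function $v$ solving $\Delta_{\widetilde g} v = -1$, $v|_{X\times\partial\Sigma} = 0$, $\|v\|_{L^\infty} \leq C(n,\lambda,\Lambda)$, and then applies Lemma~\ref{openmax1} to $u - \mathbf{e} v$ with $\mathbf{e} = \delta/(2C)$. Your proof skips the Dirichlet-problem step entirely: you observe directly that the explicit function $h(t) = t(1-t)$ is a bounded, boundary-vanishing supersolution with the uniform strict estimate $\Delta_{\widetilde g} h \leq -\Lambda^{-1}$, and apply Lemma~\ref{openmax1} to $u - \delta h$. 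This is the same comparison structure — subtract a uniformly strictly superharmonic, bounded, boundary-vanishing function, then invoke the generalized maximum principle — but your barrier is explicit, so the proof of Lemma~\ref{openmax2} no longer depends on the exhaustion argument from Proposition~\ref{isolinMA}. That makes the lemma more self-contained, which is an honest gain. It is also worth noting that both proofs rest on exactly the same underlying estimate: the paper's $C^0$ bound on $v$ is itself obtained by comparing $v$ with $At(1-t)$ inside the proof of Proposition~\ref{isolinMA}, so you are simply using the barrier openly rather than via a detour through a Dirichlet problem.

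Two small points worth flagging, neither of which is a gap. First, the bound $\widetilde g_{ij} \leq \Lambda\delta_{ij}$ in Lemma~\ref{openmax1} is stated in the asymptotic coordinates; to get $\Delta_{\widetilde g} h \leq -c$ uniformly on all of $X\times\Sigma$ you should note that on the complementary compact region $\widetilde g$ is smooth and uniformly positive, so $\widetilde g^{w\bar w}$ is bounded below there too — the paper implicitly relies on the same fact. Second, your precise claim $\Delta_{\widetilde g} h = -\widetilde g^{w\bar w}$ depends on the normalization of the complex Laplacian (there may be a factor of $2$), but the sign and the uniform bound $\Delta_{\widetilde g} h \leq -c(\Lambda)$ are robust to conventions, which is all the argument needs.
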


\begin{proof} Assuming $u$ satisfies $\sup_{X\times \Sigma} u \geq \delta>0$. According to the surjectivity part of the proof of Proposition \ref{isolinMA}, there exists a function $v $ satisfying
\begin{align*}
    \begin{cases}
    \Delta_{\widetilde{g}} v =-1, \quad  &\text{ in } X \times \Sigma,\\
    v = 0, & \text{ on } X \times \partial \Sigma,
    \end{cases}
\end{align*}
and $\|v\|_{L^\infty} \leq C (n, \lambda, \Lambda)$. Consider the function $u_{\mathbf{e}} = u -\mathbf{e} v $ for $\displaystyle \mathbf{e} = \frac{\delta}{2C} $. Then $\displaystyle \sup_{X \times \Sigma} u_\mathbf{e} \geq \frac{\delta}{2} >0$ and $\displaystyle \Delta_{\widetilde{g}} u_\mathbf{e} \geq \mathbf{e}$. According to Lemma \ref{openmax1}, there exists a sequence $\{x_k\}$ in $X \times \Sigma^\circ $ such that $\lim_{k\rightarrow \infty} u_\mathbf{e} (x_k) = \sup_{X \times \Sigma}u_\mathbf{e}$, $\lim_{k\rightarrow \infty}|du_\mathbf{e}(x_k)|_{\widetilde{g}} =0$, $\limsup_{k \rightarrow \infty} \Delta_{\widetilde{g}} u_\mathbf{e}(x_k) \leq 0$. However, $\Delta_{\widetilde{g}} u_\mathbf{e} >0$, which leads to the contradiction.
\end{proof}

\section{A priori estimate up to $\mathcal{C}^0$} \label{sectionC0}

From section \ref{sectionC0} to \ref{sectionC11}, we complete the proof of Theorem \ref{muniformc11}. The key ingredient is to prove uniform a priori estimates up to order $\mathcal{C}^{1,1}$ for the solution $\widetilde\varphi = \widetilde{\Phi} - \Psi$ of the $\varepsilon$-geodesic equation ($E_\varepsilon$). These estimates will be uniform with respect to $\varepsilon \in (0,1]$ and with respect to the distance from a fixed point in $X$. (In section \ref{secasymbehavior}, we will also see that for a fixed $\varepsilon > 0$ it can be proved that $\widetilde\varphi$ is decaying at spatial infinity. However, we are currently unable to make these decay estimates uniform with respect to $\varepsilon$.)

These uniform $\mathcal{C}^{1,1}$ estimates are then used in two ways: 
\begin{itemize}
    \item[$\bullet$] First, they allow us to solve $(E_\varepsilon)$ for any fixed $\varepsilon \in (0,1]$ via the continuity method in $(\mathcal{C}^{k,\alpha})_0$ for any $k \geq 2$. Recall this is done by considering the family of equations $(E_s)$ with $s \in [\varepsilon,1]$, where openness in $(\mathcal{C}^{k,\alpha})_0$ follows from Proposition \ref{isolinMA}. The uniform $\mathcal{C}^{1,1}$ estimates that we will prove, together with the general regularity theory of the Monge-Amp\`ere equation, then imply closedness. Here, it is not yet important that the $\mathcal{C}^{1,1}$ estimates are uniform in $\varepsilon$, and the higher $\mathcal{C}^{k,\alpha}$ estimates will depend on $\varepsilon$ because the ellipticity of the equation does.  Also, note that these higher-order estimates follow from standard local regularity in the interior and from \cite[Section 2.1--2.2]{caffarelli1985dirichlet} near the boundary because we already have a true $\mathcal{C}^{1,1}$ bound.
    
    \item[$\bullet$] Once $(E_\varepsilon)$ is actually solved, we can then let $\varepsilon$ go to zero and use the uniformity of the $\mathcal{C}^{1,1}$ estimates of the $\varepsilon$-geodesic solution $\widetilde\varphi$ to extract a subsequential limit $\varphi \in \mathcal{C}^{1,1}$ such that $\Phi = \Psi + \varphi$ solves the geodesic equation \eqref{geoequ2}, \eqref{geoequ2p}, \eqref{geoequ2b}.
\end{itemize}

We omit these standard arguments and instead focus on the proof of the uniform $\mathcal{C}^{1,1}$ a priori estimates of the $\varepsilon$-geodesic solution $\widetilde\varphi$.  For this we follow the outline of \cite{Boucksom} in the compact case. However, we provide all the necessary details that are required to generalize this theory to the ALE case.  In addition, we also make use of the recent advance \cite{chu2017regularity} to obtain a $\mathcal{C}^{1,1}$ estimate which is uniform in $\varepsilon$.

In this section, we only deal with the uniform $\mathcal{C}^0$ estimate. We begin with a standard comparison principle \cite[Proposition 3.1]{bedford1976dirchlet}.

\begin{lem} \label{intcompare}
    Let $D$ be a bounded connected domain in $\CC^n$ with smooth boundary and $u,v \in \C^2 (D)$, plurisubharmonic functions in $D$. If $u= v $ on $ \partial D$ and $ u \geq v $, then we have
    \begin{align*}
        \int_{\Omega} (dd^c u )^n \leq \int_{\Omega} (dd^c v)^n.
    \end{align*}
\end{lem}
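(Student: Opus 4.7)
The approach follows the standard integration-by-parts argument of Bedford--Taylor \cite{bedford1976dirchlet}. I begin with the telescoping identity
\begin{equation*}
(dd^c v)^n - (dd^c u)^n \;=\; dd^c(v-u) \wedge T, \qquad T := \sum_{k=0}^{n-1}(dd^c v)^k \wedge (dd^c u)^{n-1-k},
\end{equation*}
in which $T$ is a closed, positive $(n-1,n-1)$-form on $D$: closed because each factor $dd^c u, dd^c v$ is closed, and positive as a sum of wedge products of positive $(1,1)$-forms. Setting $w := v-u \in \C^2(\bar D)$, the hypotheses give $w \leq 0$ in $D$ and $w \equiv 0$ on $\partial D$. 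Integrating the identity over $D$ and applying Stokes' theorem together with $dT = 0$ reduces the lemma to showing
\begin{equation*}
\int_{\partial D} d^c w \wedge T \;\geq\; 0.
\end{equation*}

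For the boundary analysis I work in a collar neighborhood of $\partial D$ with a smooth defining function $\rho$ (negative in $D$, vanishing on $\partial D$, with $d\rho \neq 0$ there). The vanishing of $w$ on $\partial D$ combined with $w \leq 0$ inside yields a representation $w = h\rho$ with $h \in \C^1$ near $\partial D$ and $h|_{\partial D} = (\partial_\nu w)/(\partial_\nu \rho) \geq 0$, since the outward normal derivative of the nonpositive function $w$ that vanishes on $\partial D$ must be nonnegative. Therefore $d^c w|_{\partial D} = h \cdot d^c\rho|_{\partial D}$, and the boundary integral becomes $\int_{\partial D} h \cdot d^c\rho \wedge T$.

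I would establish nonnegativity by induction on $n$. The case $n = 1$ follows from Green's formula: $T \equiv 1$ and $\int_{\partial D} d^c w$ is a positive multiple of $\int_{\partial D} \partial_\nu w \, dS \geq 0$. For the inductive step, one peels off one factor of $dd^c u$ or $dd^c v$ from $T$ and reapplies Stokes' theorem (using the closedness of the remaining factors) to convert $\int_{\partial D} h \cdot d^c\rho \wedge T$ into an interior integral of the form $\int_D dd^c w \wedge S$ for some closed positive $(n-1,n-1)$-form $S$, or equivalently a sum of such integrals with strictly fewer copies of $dd^c v$ than $T$ had. The desired sign then follows from the inductive hypothesis applied to this lower-order Monge--Amp\`ere comparison, together with the positivity of $S$ and the sign constraints on $w$ and $h$.

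The main obstacle is this last step: extracting the correct sign of the boundary integral without any pseudoconvexity assumption on $\partial D$. Pointwise, the form $d^c\rho \wedge T$ need not have a definite sign on $\partial D$; the nonnegativity emerges only after integration, through a careful combination of the closedness and positivity of $T$, the sign $h \geq 0$, and repeated integrations by parts that convert boundary data into interior data to which the inductive hypothesis applies.
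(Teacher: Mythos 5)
The paper does not give its own proof of this lemma; it states it and cites it directly as \cite[Proposition 3.1]{bedford1976dirchlet}, so there is no in-paper argument to compare against. I will therefore evaluate your attempt on its own terms, which is natural since it tries to reproduce the Bedford--Taylor argument.

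Your reduction is correct and is indeed the standard one: the telescoping identity, the observation that $T$ is closed and positive, Stokes' theorem with $dT = 0$, and the boundary reduction $dw|_{\partial D} = a\,d\rho$ with $a = \partial_\nu w / \partial_\nu\rho \geq 0$, hence $d^c w|_{\partial D} = a\,d^c\rho$, are all right. The gap is in your final paragraph, and it stems from a misdiagnosis: you assert that $d^c\rho \wedge T$ pointwise ``need not have a definite sign on $\partial D$'' and then set up an induction with a frank admission that you cannot close it. In fact $d^c\rho \wedge T$ \emph{is} pointwise nonnegative on $\partial D$, with no pseudoconvexity hypothesis, precisely because $T$ is a positive $(n-1,n-1)$-form built from $dd^cu$ and $dd^cv$ rather than from $dd^c\rho$. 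At a boundary point $p$, choose holomorphic coordinates with $d\rho(p)$ proportional to $dx_n$; then $d^c\rho(p) = (\partial_\nu\rho)\,dy_n$, and $dx_n$ pulls back to zero on $T_p\partial D$, so $dz_n$ and $d\bar z_n$ both become multiples of $dy_n$ there and in particular $dz_n\wedge d\bar z_n$ vanishes on $T_p\partial D$. Wedging with $dy_n$ then also kills the terms of $T$ containing a single $dz_n$ or $d\bar z_n$, so $d^c\rho\wedge T|_{T_p\partial D} = (\partial_\nu\rho)\,dy_n\wedge T'$ where $T'$ is the restriction of $T$ to the complex tangent hyperplane $\mathrm{span}_{\mathbb{C}}\{\partial_{z_1},\dots,\partial_{z_{n-1}}\}$. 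Since $T$ is a positive $(n-1,n-1)$-form, $T'$ is nonnegative (equivalently, $T\wedge i\,dz_n\wedge d\bar z_n\geq 0$), and $\partial_\nu\rho>0$, so the integrand is pointwise $\geq 0$ in the induced boundary orientation. That finishes the proof in one stroke; the induction you sketch is unnecessary and, as stated, does not convert the boundary integral into a lower-dimensional Monge--Amp\`ere comparison of the required form. Your worry about pseudoconvexity would be legitimate if $T$ were $(dd^c\rho)^{n-1}$, but here the positivity of $T$ comes from the plurisubharmonicity of $u$ and $v$, not from any convexity of $\partial D$.
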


Then we can prove the following maximum principle for Monge-Amp\`{e}re operators.

\begin{thm} \label{mp2}
    Let $\Theta$ be a fixed reference K\"ahler form and $\Omega$, the pull-back of a semipositive $(1,1)$-form in $X$. Assume that $u,v\in  \C^2(X \times \Sigma)$ are bounded functions with $\Omega+dd^c v$, $\Omega+ dd^c u \geq 0$. If for some positive constants $\lambda$, $\Lambda$, we have the following properties: 
    \begin{align} 
        &(\Omega + dd^c v)^{n+1} \leq (\Omega + dd^c u)^{n+1} &\quad \text{ in } X \times \Sigma, \label{mp2con1}\\
        &\lambda\Theta \leq \Omega + dd^c u \leq \Lambda \Theta &\quad \text{ in } X \times \Sigma, \label{mp2con2}\\
        & u \leq v & \quad \text{ on } X \times \partial \Sigma, \label{mp2con3}
    \end{align}
       then $u \leq v$ in  $ X \times \Sigma$.
\end{thm}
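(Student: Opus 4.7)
The plan is to combine two perturbations --- one in the $t$-direction to upgrade the weak Monge--Amp\`ere inequality $\alpha^{n+1}\ge\beta^{n+1}$ to a strict one, and one in the $X$-direction (a radial barrier) that forces an auxiliary function to attain its global maximum at a finite interior point --- and then apply the pointwise maximum principle to derive a contradiction. Sending the two perturbation parameters to zero at the end recovers the full statement.

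For the first perturbation, I set $\phi(t):=t^2-t$, so that $\phi\le 0$ on $\Sigma$, $\phi\vert_{\partial\Sigma}=0$, and $dd^c\phi = i\,dw\wedge d\bar w$ is a positive $(1,1)$-form tangent to $\Sigma$. Putting $u_\varepsilon:=u+\varepsilon\phi$ for small $\varepsilon>0$, one has $u_\varepsilon \le v$ on $X\times\partial\Sigma$, and $\alpha_\varepsilon:=\Omega+dd^c u_\varepsilon = \alpha + \varepsilon\, dd^c\phi$ remains bilaterally bounded in terms of $\Theta$. Since $dd^c\phi$ is rank one, the matrix determinant lemma gives $\det(\alpha_\varepsilon)/\det(\alpha) = 1 + \varepsilon\,\tr_\alpha(dd^c\phi)$. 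Using the upper bound $\alpha\le\Lambda\Theta$ and the positivity of the $\Sigma$-component of $\Theta$, $\tr_\alpha(dd^c\phi)$ is bounded below by a uniform constant, whence
\begin{equation*}
\alpha_\varepsilon^{n+1}\ \ge\ (1+c_0\varepsilon)\,\alpha^{n+1}\ \ge\ (1+c_0\varepsilon)\,\beta^{n+1}\quad\text{on}\ X\times\Sigma
\end{equation*}
for some uniform $c_0>0$. It therefore suffices to show $u_\varepsilon\le v$ for each such $\varepsilon$.

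Suppose otherwise: $M_\varepsilon := \sup_{X\times\Sigma}(u_\varepsilon - v) > 0$. Let $r$ be the smooth ALE radial function used in Lemma~\ref{openmax1}; by the estimates \eqref{radialf} and the bilateral bound on $\alpha_\varepsilon$, $|\tr_{\alpha_\varepsilon}(dd^c r)|\le C_r$ uniformly for some $C_r<\infty$. For $0 < \eta \ll \varepsilon$, set
\begin{equation*}
W_\eta\ :=\ u_\varepsilon - v - \eta\,r.
\end{equation*}
Then $W_\eta\to-\infty$ as $r\to\infty$, $W_\eta\le -\eta r\le 0$ on $X\times\partial\Sigma$, and $W_\eta > 0$ at some point provided $\eta$ is small enough relative to $M_\varepsilon$ and to the value of $r$ at a chosen near-maximizer of $u_\varepsilon-v$. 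Hence $W_\eta$ attains its maximum at an interior point $x_0\in X\times\Sigma^\circ$, at which the pointwise maximum principle gives $dd^c W_\eta(x_0)\le 0$, i.e.\ $\beta(x_0) \ge \alpha_\varepsilon(x_0) - \eta\,dd^c r(x_0)$. For $\eta$ small the right-hand side is positive Hermitian, and taking determinants then expanding to first order in $\eta$ yields
\begin{equation*}
\beta^{n+1}(x_0)\ \ge\ \alpha_\varepsilon^{n+1}(x_0)\bigl(1 - \eta C_r + O(\eta^2)\bigr).
\end{equation*}
Comparing with the strict inequality of the previous paragraph gives $1 - \eta C_r + O(\eta^2) \le 1/(1+c_0\varepsilon)$, which fails as soon as $\eta < c_0\varepsilon/(2C_r)$. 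This contradiction forces $M_\varepsilon\le 0$, and letting $\varepsilon\to 0$ finishes the proof.

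The key technical point is producing the uniform positive gain $c_0$ in the first step: it relies on both the upper bound $\alpha\le\Lambda\Theta$ and the positivity of the $\Sigma$-direction of $\Theta$, and crucially does not require any two-sided control on $\beta$. The non-compactness of $X$ is then handled entirely through the radial barrier $\eta r$, so that the generalized maximum principle of Lemma~\ref{openmax1} is invoked only implicitly (as the source of the radial function $r$ and the bound \eqref{radialf}), not through its Laplacian conclusion, which would be too weak to compare $\det\alpha_\varepsilon$ with $\det\beta$ at an approximate maximum. The main care needed is in respecting the ordering $\eta\ll\varepsilon$ before sending $\varepsilon\to 0$.
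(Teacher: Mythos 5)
Your proof is correct, and it reaches the conclusion by a genuinely leaner route than the paper. The paper also begins with a time-direction perturbation (there, $\tilde u = u + \tfrac{h}{2}\lvert\tau\rvert^2$) and a spatial perturbation by $|z|^2$ near an approximate maximum produced by Wu--Yau's generalized maximum principle; but the contradiction is then extracted via the Bedford--Taylor integral comparison (Lemma~\ref{intcompare}) applied on a local sublevel domain $D$, expanding $[dd^c(\rho+\dbtilde{u})]^{n+1}$ and $[dd^c(\rho+\dbtilde{v})]^{n+1}$. You replace that entire local integral step with a pointwise one: the radial barrier $\eta r$ converts the approximate maximum into an \emph{exact} interior maximum, at which the second-derivative test gives a full Hessian inequality $\beta(x_0)\ge\alpha_\varepsilon(x_0)-\eta\,dd^c r(x_0)$, and monotonicity of the determinant on the cone of positive Hermitian forms finishes the job. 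This is more elementary (no integral comparison principle is needed), makes explicit that only the full pointwise Hessian comparison is usable here — the Laplacian-level output of Lemma~\ref{openmax1} would not suffice to compare determinants, as you note — and cleanly separates the two roles of hypothesis \eqref{mp2con2}: the upper bound $\alpha\le\Lambda\Theta$ produces the uniform gain $c_0\varepsilon$ via $\tr_\alpha(dd^c\phi)\ge\Lambda^{-1}\tr_\Theta(dd^c\phi)$, while the lower bound $\alpha\ge\lambda\Theta$ keeps $\alpha_\varepsilon-\eta\,dd^c r$ positive and $\tr_{\alpha_\varepsilon}(dd^c r)$ uniformly bounded. Two small points worth making explicit in a final write-up: the bound $\lvert dd^c r\rvert\le C\,\Theta$ on the smooth extension of the radial function (which you use but which \eqref{radialf} only states at the Laplacian level — it does hold for the standard ALE radius and its extension), and that $\alpha_\varepsilon^{n+1}(x_0)>0$ follows from $\alpha_\varepsilon\ge\lambda\Theta$, so the division in the last step is legitimate even if $\beta^{n+1}(x_0)$ happens to vanish.
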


\begin{proof}
    Assume $u (z_0) > v (z_0)$ at some point $z_0 \in X \times \Sigma$. Let $2h = u(z_0) - v(z_0)$. Then, we can modify $u,v$ to be $\tilde{u},\tilde{v}$ as follows:
    \begin{align}\label{mp2constr1}
    \begin{split}
    &\tilde{v} = v + h, \\
    &\tilde{u} = u + \frac{h}{2} |\tau|^2.
    \end{split}
    \end{align}
    It can be checked that $\tilde{u}$, $\tilde{v}$ are bounded functions satisfying that $\tilde{u} \leq \tilde{v}$ on $ X \times \partial \Sigma$ and $\tilde{u} (z_0) \geq \tilde{v} (z_0) + h$. By Wu-Yau's generalized maximum principle, there exists a sequence $\{p_k\}$ in $X \times \Sigma$ such that
    \begin{equation*}
        \lim_{k \rightarrow \infty} (\tilde{u} - \tilde{v})(p_k) = \sup_{X \times \Sigma} (\tilde{u}- \tilde{v}) \geq h, \quad \limsup_{k\rightarrow \infty} dd^c (\tilde{u}- \tilde{v}) (p_k) \leq 0.
    \end{equation*}
    For a sufficiently small constant $\delta >0$, there exist a point $p \in X \times \Sigma$, $dd^c \tilde{u} ( p ) - dd^c \tilde{v} (p) \leq \delta \Theta$ and $\eta_0 = \tilde{u} (p) - \tilde{v} (p) \geq \sup_{X \times \Sigma} (\tilde{u} - \tilde{v})- \delta$. Fix a local holomorphic chart around $p$, $\{U, z^{i}: i= 1, \ldots, n+1\}$ with $z^{n+1} = \tau $. Without loss of generality, we assume $U$ contains the unit disk in $\CC^{n+1}$ and for any local vector field $V \in T^{1,0}U$,
    \begin{align*}
        C^{-1} |V|^2 \leq \Theta (V, \overline{V}) \leq C |V|^2,
    \end{align*}
    where the constant $C$ only depends on the geometry of $X$ and the reference metric $\Theta$. Let $\mathbf{e} = 2C \delta$ and $\eta = \eta_0 - \frac{C \delta}{2} $. To derive the contradiction, we construct the following  local functions in $U$,
    \begin{align}\label{mp2constr2}
    \begin{split}
        &\dbtilde{u} = \tilde{u} - \mathbf{e} |z|^2,\\
        &\dbtilde{v} = \tilde{v} + \eta.
    \end{split}
    \end{align}
    If we denote the unit ball contained in the coordinate chart of $U$ by $B_1(p)$, we have $\dbtilde{u}(p) - \dbtilde{v} (p) = \frac{C\delta}{2}>0$ and $\dbtilde{u} \leq \dbtilde{v} $ on $ \partial B_1(p) $. Consider the following subset of $B_1(p)$,
    \begin{align*}
        D = \{z\in B_1(p): \dbtilde{u}(z) >\dbtilde{v} (z)\}.
    \end{align*}
    Let $\rho$ be the local potential of $\Omega$ in $U$, $\Omega= dd^c \rho$. According to Lemma \ref{intcompare}, 
    \begin{align}
        \int_D \big[dd^c (\rho + \dbtilde{v})\big]^{n+1} \geq \int_D \big[dd^c(\rho + \dbtilde{u})\big]^{n+1}.
    \end{align}
    Taking $\mathbf{e}\leq \frac{\lambda}{4C}$, 
    \begin{align*}
        dd^c(\rho + u - \mathbf{e} |z|^2) \geq \frac{1}{2} dd^c (\rho + u).
    \end{align*}
    Together with the construction of $\dbtilde{u}$ and $\dbtilde{v}$ in \eqref{mp2constr1}, \eqref{mp2constr2},
    \begin{align}
        \int_D \big[dd^c (\rho + v) \big]^{n+1} &\geq \int_D \Big[dd^c \big(\rho +u + \frac{h}{2} |\tau|^2 - \mathbf{e}|z|^2\big)\Big]^{n+1} \nonumber\\
        & \geq \frac{h \lambda^n}{2^{n+1}} \int_D \Theta^{n+1} + \int_D \big[dd^c (\rho +u)\big]^{n+1} - 2\mathbf{e} \Lambda^{n} \int_{D} \Theta^{n+1}. \label{mp2keycalc}
    \end{align}
    By picking $\mathbf{e}$ smaller, $\mathbf{e} \leq \frac{h \lambda^n}{2^{n+4} \Lambda^{n}}$, and combining with \eqref{mp2con1}, we have,
    \begin{align}
        \int_D \big[dd^c (\rho + v)\big]^{n+1} &\geq \int_D \big[dd^c (\rho +u)\big]^{n+1} + \frac{h \lambda^n}{2^{n+4}} \int_D \Theta^{n+1} \nonumber \\ &\geq \int_D \big[dd^c (\rho + v)\big]^{n+1} + \frac{h \lambda^n}{2^{n+4}} \int_D \Theta^{n+1}. \label{mp2keycontra}
    \end{align}
    Since the second term of \eqref{mp2keycontra} is strictly positive, which leads to a contradiction, we complete the proof.
\end{proof}

Let $\widetilde\varphi = \widetilde{\Phi}-\Psi$ be the solution of $(E_\varepsilon)$ after subtracting $\Psi$. According to Theorem \ref{mp2}, we have a uniform lower bound $\widetilde\varphi \geq 0$; hence, $\widetilde{\Phi} \geq \Psi$. The upper bound is easy to construct. Consider the  function defined in $X\times \Sigma$, $H = 2t(1-t)$. By restricting to each section $\Sigma_{x_0}= \{x_0\} \times \Sigma \stackrel{i_{x_0}}{\hookrightarrow} X \times \Sigma$, we have
\begin{align*}
   i_{x_0}^{*} ( \Theta_\Psi + dd^c H ) \leq 0 < i_{x_0}^* (\Theta_\Psi + dd^c \varphi). 
\end{align*}
Hence, $\Delta_\Sigma H \leq dd^c \widetilde\varphi $ in $\Sigma_{x_0}$ and $H= \varphi =0 $ on its boundary $\partial \Sigma_{x_0}$. The maximum principle on compact manifolds with boundary implies that $\widetilde\varphi \leq H$ on each section. Hence, we get the desired uniform $\C^0$ estimate,
\begin{align*}
    \Psi \leq \widetilde{\Phi} \leq \Psi + H.
\end{align*}
 
\section{A priori estimate up to $\C^1$}

For the $\C^1$ bound, Blocki gives an explicit estimate in the compact setting in \cite{blocki2009gradient}. We generalize this estimate to the non-compact case. The $\C^1$ boundary estimate follows directly from the fact that $\Psi \leq \widetilde{\Phi} \leq \Psi + H $ in $X \times \Sigma$ and $\Psi$, $\widetilde{\Phi}$, $\Psi + H$ agree along $X \times \partial \Sigma$. Let $\nabla$ be the Levi-Civita connection of $\Theta_\Psi$ on $X \times \Sigma$. Then we have
\begin{align*}
    |\nabla \widetilde{\Phi} |_{\Theta_\Psi} \leq \max \{ |\nabla \Psi|_{\Theta_\Psi}, |\nabla (\Psi + H)|_{\Theta_\Psi} \}, \quad \text{ on } X \times \partial \Sigma.
\end{align*}
Hence, $\sup_{X \times \partial \Sigma} |\nabla \widetilde{\Phi}|_{\Theta_\Psi} \leq C$, where $C$ is a uniform constant.

\begin{pro} \label{c1priest} Let $\widetilde\varphi = \widetilde{\Phi} -\Psi \in \C^{3}_{loc} (X \times \Sigma)$ be a solution of $(E_{\varepsilon})$ and let  $\nabla$ be the Levi-Civita connection of the Kähler metric $\Theta_\Psi$ on $X \times \Sigma$. Assume that $\widetilde\varphi$ lies in the space $\C^1(X \times \Sigma, \Theta_\Psi)$. Then, 
\begin{align*}
    \sup_{X \times \Sigma}|\nabla \widetilde\varphi|_{\Theta_\Psi} \leq C,
\end{align*}
where $C$ is a positive constant depending only on upper bounds for $|\widetilde\varphi|$, on lower bounds for the bisectional curvature of $\Theta_\Psi$, and on $n$, but not on $\varepsilon$.
\end{pro}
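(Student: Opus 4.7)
The plan is to generalize Blocki's compact gradient estimate \cite{blocki2009gradient} to the non-compact setting by replacing the classical maximum principle with the generalized maximum principle of Lemma \ref{openmax1}.  The boundary bound $\sup_{X\times\partial\Sigma}|\nabla\widetilde\varphi|_{\Theta_\Psi}\le C$ has already been established, so I focus on the interior. Consider the test function
\begin{align*}
\alpha \,=\, \log|\nabla\widetilde\varphi|^2_{\Theta_\Psi} \,-\, A\,\widetilde\varphi,
\end{align*}
where $A>0$ will be chosen large depending on a lower bound for the bisectional curvature of $\Theta_\Psi$ and on $\sup_{X\times\Sigma}|\widetilde\varphi|$.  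The a priori hypothesis $\widetilde\varphi\in\C^1(X\times\Sigma,\Theta_\Psi)$ guarantees that $\alpha$ is bounded above, which is precisely what is needed in order to apply Lemma \ref{openmax1}.

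Let $\Delta'$ denote the Laplacian of $\Theta_{\widetilde\Phi}=\Theta_\Psi+dd^c\widetilde\varphi$.  The K\"ahler--Bochner identity on $(X\times\Sigma,\Theta_\Psi)$ yields the standard inequality
\begin{align*}
\Delta' \log|\nabla\widetilde\varphi|^2_{\Theta_\Psi} \,\geq\, \frac{2\,\re\bigl\langle\nabla\Delta'\widetilde\varphi,\nabla\widetilde\varphi\bigr\rangle_{\Theta_\Psi}}{|\nabla\widetilde\varphi|^2_{\Theta_\Psi}} \,-\, K\,\tr_{\Theta_{\widetilde\Phi}}\Theta_\Psi,
\end{align*}
where $K$ depends only on a lower bound for the bisectional curvature of $\Theta_\Psi$.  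Differentiating the logarithm of the $\varepsilon$-geodesic equation $(\Theta_\Psi+dd^c\widetilde\varphi)^{n+1}=\varepsilon\,\Theta_\Psi^{n+1}$, one finds $\Delta'\widetilde\varphi=(n+1)-\tr_{\Theta_{\widetilde\Phi}}\Theta_\Psi$ and rewrites the first term on the right in terms of $|\nabla\widetilde\varphi|_{\Theta_\Psi}$ and curvature.  Adding the contribution $-A\,\Delta'\widetilde\varphi=-A(n+1)+A\,\tr_{\Theta_{\widetilde\Phi}}\Theta_\Psi$ coming from the penalty term and fixing $A\geq K+1$, I obtain a Blocki-type differential inequality
\begin{align*}
\Delta'\alpha \,\geq\, \tr_{\Theta_{\widetilde\Phi}}\Theta_\Psi \,-\, \tilde C,
\end{align*}
valid up to terms that vanish whenever $d\alpha=0$, at every point of $X\times\Sigma^\circ$.

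Now I apply Lemma \ref{openmax1} with $\widetilde g=\Theta_{\widetilde\Phi}$: since the $\varepsilon$-geodesic solution satisfies $\lambda\Theta\le\Theta_{\widetilde\Phi}\le\Lambda\Theta$, and $\Theta=p^*\omega+A\,dd^ct(t-1)$ is uniformly equivalent to the Euclidean metric in the asymptotic coordinates of $X\times\Sigma$ (because $\omega$ is ALE), the hypotheses of the generalized maximum principle are met.  Either $\sup\alpha$ is attained on $X\times\partial\Sigma$, in which case the boundary estimate concludes, or else one obtains a sequence $\{x_k\}\subset X\times\Sigma^\circ$ along which $\alpha(x_k)\to\sup\alpha$, $|d\alpha|_{\Theta_{\widetilde\Phi}}(x_k)\to 0$, and $\limsup_k\Delta'\alpha(x_k)\le 0$.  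Substituting into the differential inequality above forces $\tr_{\Theta_{\widetilde\Phi}}\Theta_\Psi(x_k)\le\tilde C$, and combining with the $\C^0$ bound on $\widetilde\varphi$ and the first-order condition gives $\sup\alpha\le C$; hence $|\nabla\widetilde\varphi|_{\Theta_\Psi}\le C$ on all of $X\times\Sigma$.

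The main obstacle is guaranteeing that the bisectional curvature of $\Theta_\Psi$ is uniformly bounded below and that the radial-function estimates \eqref{radialf} required by Lemma \ref{openmax1} persist in the $X\times\Sigma$ geometry.  Because $\omega$ is ALE, $\psi_0,\psi_1\in\HH_{-\gamma}(\omega)$ have uniformly controlled second covariant derivatives, and $\Psi(\cdot,t,e^{is})=(1-t)\psi_0+t\psi_1$ depends only linearly on $t$, the metric $\Theta_\Psi=p^*\omega+A\,dd^ct(t-1)+dd^c\Psi$ has uniformly bounded curvature on $X\times\Sigma$; thus the constant $K$, and hence the final constant $C$ in the gradient bound, is indeed finite and independent of $\varepsilon$.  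The remaining bookkeeping -- the precise form of the Bochner identity in the product geometry of $X\times\Sigma$ and the absorption of the first-order gradient terms via $d\alpha=0$ -- is routine and mirrors the computation in \cite{blocki2009gradient}.
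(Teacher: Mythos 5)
The overall strategy — Blocki-type gradient estimate via a test function $\log|\nabla\widetilde\varphi|^2$ minus a penalty in $\widetilde\varphi$, combined with the generalized maximum principle of Lemma \ref{openmax1} — is the same as the paper's, and your a priori boundedness hypothesis, boundary dichotomy, and discussion of the curvature/radial-function hypotheses are all in order. However, there is a genuine gap at the heart of the argument.

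You use a \emph{linear} penalty, $\alpha = \log\beta - A\widetilde\varphi$ with $\beta = |\nabla\widetilde\varphi|^2_{\Theta_\Psi}$ and $A$ a constant. With this choice, after using the first-order condition $d\alpha = 0$ and the Bochner computation, the inequality you arrive at is $\Delta'\alpha \geq \tr_{\Theta_{\widetilde\Phi}}\Theta_\Psi - \tilde C$. Applying Lemma \ref{openmax1} then only gives $\tr_{\Theta_{\widetilde\Phi}}\Theta_\Psi(x_k)\leq\tilde C$, i.e.\ a bound on $\sum_p u_{p\overline p}^{-1}$ where $u_{p\overline p}$ are the eigenvalues of $\Theta_{\widetilde\Phi}$ relative to $\Theta_\Psi$. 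That is a bound on the complex Hessian of $\widetilde\varphi$, \emph{not} on $\beta$. At the end you claim ``combining with the $\C^0$ bound on $\widetilde\varphi$ and the first-order condition gives $\sup\alpha\leq C$'', but the first-order condition at the near-max point reads $\nabla\beta/\beta = A\nabla\widetilde\varphi$, which is self-referential in $\beta$ and yields no bound. In fact nothing in the resulting inequality constrains $|\nabla\widetilde\varphi|$.

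This is precisely why Blocki, and the paper following him, use a penalty $\gamma\circ\widetilde\varphi$ with $\gamma$ \emph{strictly concave} ($\gamma''<0$) rather than linear. After the cancellation of the $(\gamma')^2\sum_p u_{p\overline p}^{-1}|\widetilde\varphi_p|^2$ terms coming from Blocki's observation, the surviving $-\gamma''\sum_p u_{p\overline p}^{-1}|\widetilde\varphi_p|^2>0$ term is what supplies the bound on $\sum_p u_{p\overline p}^{-1}|\widetilde\varphi_p|^2$; combined with the bound on $\sum_p u_{p\overline p}^{-1}$ and the determinant constraint $\prod_p u_{p\overline p}=\upsilon(\varepsilon)\leq 1$, one gets a two-sided bound on the $u_{p\overline p}$ and hence $\beta=\sum_p u_{p\overline p}\cdot(u_{p\overline p}^{-1}|\widetilde\varphi_p|^2)\leq C$. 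Without the $\gamma''$ term the argument does not close. To repair your proof, replace $A\widetilde\varphi$ by $\gamma\circ\widetilde\varphi$ with $\gamma(t)=(D+V+3)(t-A)-(B-A)^{-1}(t-A)^2$ on $[A,B]=[\inf\widetilde\varphi,\sup\widetilde\varphi]$, as in the paper. A secondary point: you wrote the $\varepsilon$-geodesic equation with a constant $\varepsilon$ on the right-hand side, but $(E_\varepsilon)$ actually has $\upsilon(\varepsilon)$ which is a \emph{function} on $X\times\Sigma$; differentiating the logarithm of the equation produces $\nabla\log\upsilon(\varepsilon)$ terms, which must be absorbed (the paper controls them via the uniform constant $V\gtrsim|\nabla(\upsilon(\varepsilon)^{1/(n+1)})|$, which is bounded independently of $\varepsilon$ thanks to \eqref{conupsilon}); your proposal omits these entirely.
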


\begin{proof} Suppose that $\inf_{X \times \Sigma} \widetilde\varphi =A$ and $\sup_{X \times \Sigma} \widetilde\varphi =B$. Consider the following function,
\begin{align*}
    \alpha = \log \beta - \gamma \circ \widetilde\varphi,
\end{align*}
where $\beta = |\nabla \widetilde\varphi|^2_{\Theta_\Psi}$ and $\gamma : [A, B ] \rightarrow \RR$ is a smooth function to be determined later. According to the assumption that $\widetilde\varphi$ lies in the space $\C^1$,  Yau's maximum principle can be applied here. In particular, there exists a sequence in $\{x_k\}$ in $X \times \Sigma^{\circ}$ such that,
\begin{align*}
    \lim_{k \rightarrow \infty} \alpha (x_k) = \sup_{X \times \Sigma} \alpha, \quad \lim_{k \rightarrow \infty} |\nabla \alpha (x_k)|_{\Theta_\Psi} =0, \quad \limsup_{k \rightarrow \infty} \Delta \alpha (x_k) \leq 0,
\end{align*}
where $\Delta = \Delta_{\Theta_\Psi}$. Then, for a sufficiently small $\mathbf{e} >0$ to be determined later and all $k \gg 1$, we have
\begin{align} \label{devcon1}
   \alpha(x_k) \geq \sup_{X \times \Sigma} \alpha -\mathbf{e}, \quad |\nabla \alpha(x_k)|_{\Theta_\Psi} \leq \mathbf{e}, \qquad \Delta \alpha (x_k) \leq \mathbf{e}.
\end{align}
Fixing $O =x_k$ satisfying \eqref{devcon1}, we can pick the normal coordinates around $O$. Let $g$ and $\widetilde{g}$ denote the metric tensors corresponding to $\Theta_\Psi$ and $\Theta_{\widetilde{\Phi}} = \Theta_\Psi + dd^c \widetilde\varphi$. Then there exist local holomorphic coordinates near $O$ such that,
\begin{align*}
    g_{i\overline{j}} (O) = \delta_{ij}, \quad g_{i\overline{j}, k}(O) = 0 \quad \text{ and } \quad  \widetilde{g}_{i\overline{j}}(O) \text{ is diagonal}.
\end{align*}
By taking derivative of $\alpha $, 
\begin{align*}
    \alpha_p = \frac{\beta_p}{ \beta} - (\gamma' \circ \widetilde\varphi) \cdot \widetilde\varphi_p.
\end{align*}
Combining with condition \eqref{devcon1}, $|\alpha_p (O)| \leq \mathbf{e}$. Then, at the point $O$, we have
\begin{align} \label{2nddevtest1}
    \alpha_{p\overline{p}} \geq \frac{\beta_{p\overline{p}}}{\beta} - [(\gamma')^2 + \gamma''] |\widetilde\varphi_p|^2 -\gamma' \widetilde\varphi_{p\overline{p}} - \mathbf{e} |\gamma'| |\widetilde\varphi_p| - \mathbf{e}.
\end{align}
If we write the local potential of $\widetilde{g}_{i\overline{j}} $ as $u$ near $O$, then the $\varepsilon$-geodesic equation is locally given by $\det(u_{i\overline{j}}) =  \upsilon (\varepsilon) \det( g_{i\overline{j}})$. The direct derivative of the equation at $O$ gives,
\begin{align} \label{devcon2}
    \sum_{p}\frac{u_{p\overline{p} j}}{ u_{p\overline{p}}} = \big(\log \upsilon (\varepsilon) \big)_j.
\end{align}
Also, notice that,
\begin{align*}
    \beta_{p\overline{p}} \geq -D \beta + 2 \re \sum_{j} u_{p\overline{p}j}\widetilde\varphi_{\overline{j}} + \sum_j |\widetilde\varphi_{jp}|^2 + \widetilde\varphi_{p\overline{p}}^2,
\end{align*}
where $-D$ is the negative lower bound of bisectional curvature of $\Theta_\Psi$. Recall that we have the assumption $C^{-1}g_{i\overline{j}} \leq u_{i\overline{j}} \leq C g_{i\overline{j}}$ and $|\widetilde\varphi_p| < C$, where $C$ is the constant from our assumption at the beginning of this section and we will get rid of this constant in the end. Together with  \eqref{2nddevtest1} and \eqref{devcon2}, we have,
\begin{align} \label{c1key1}
\begin{split}
C \mathbf{e} \geq \sum_{p} \frac{\alpha_{p\overline{p}}}{u_{p\overline{p}}} \geq (\gamma' &-D) \sum_{p} \frac{1}{u_{p\overline{p}}} + \frac{1}{\beta} \sum_{jp}  
\frac{|\widetilde\varphi_{jp}|}{ u_{p\overline{p}}}\\ & -2 \re \frac{1}{\beta} \sum_{j} \big(\log \upsilon (\varepsilon) \big)_j \widetilde{\varphi}_{\overline{j}} \\ &- [(\gamma')^2+ \gamma''] \sum_{p} \frac{|\widetilde\varphi_p|^2}{ u_{p\overline{p}}} - n \gamma' - C(|\gamma'| +1)\mathbf{e}.
\end{split}
\end{align}
According to Blocki's key observation in \cite{blocki2009gradient}, after modified in our case, at the point $O$, we have
\begin{align*}
    \frac{1}{\beta} \sum_{j,p} \frac{|\widetilde\varphi_{jp}|^2}{u_{p\overline{p}}} \geq (\gamma')^2 \sum_p \frac{|\widetilde\varphi_p|^2}{u_{p\overline{p}}}- 2 \gamma' - \frac{2+ C\mathbf{e}}{\beta} - C (1+ |\gamma'|) \mathbf{e},
\end{align*}
and
assuming that $\beta \geq 1$, we have
\begin{align*}
    \frac{2}{\beta} \re \sum_j \big(\log \upsilon \big)_j \varphi_{\overline{j}} \geq -2 \frac{|\nabla \log \upsilon (\varepsilon)|}{\sqrt{\beta}} \geq - 2(n+1) \frac{\big|\nabla \big(\upsilon (\varepsilon)^{\frac{1}{n+1}}\big)\big|}{\upsilon(\varepsilon)^{\frac{1}{n+1}}} \geq - V \sum_{p} \frac{1}{u_{p\overline{p}}}
\end{align*}
where $V$ is a uniform constant satisfying  
\begin{align*}
    V \geq 2(n+1) \big| \nabla \big( \upsilon (\varepsilon)^{\frac{1}{n+1}}\big) \big|.
\end{align*}
Combining with \eqref{c1key1},
\begin{align} \label{c1key2}
    C(1+ |\gamma'|) \mathbf{e} \geq (\gamma' -D -V) \sum_{p} \frac{1}{u_{p\overline{p}}} - \gamma'' \sum_{p} \frac{|\widetilde\varphi_p|^2}{u_{p\overline{p}}}- (n+2) \gamma' -2.
\end{align}
Now, we choose the function $\gamma$ and the small number $\mathbf{e}>0$ in \eqref{c1key2} as follows. Let $\gamma = (D +V+3)(t-A) - (B-A)^{-1} (t-A)^2$ and $\mathbf{e} \leq C^{-1} (D+V+3)^{-1}$, then we have
\begin{align*}
    \sum_{p} \frac{1}{u_{p\overline{p}}} + \frac{2}{B-A} \sum_{p} \frac{|\widetilde\varphi_p|^2}{ u_{p\overline{p}}} \leq 3 + (n+2) (D+V+3).
\end{align*}
Then, it is straightforward to conclude that
$\beta (O) \leq \max \{ [(n+3)(D+V+3)]^{n+1} n(B-A) ,1\}$. Noting that $\beta \leq \exp \{ \mathbf{e} + \log \beta(O) - \gamma\circ\widetilde\varphi (O) + \gamma\circ \widetilde\varphi\}$, hence, $\beta$ is controlled by some uniform constant only depending on $\|\widetilde\varphi\|_{L^\infty}$, $D$, $V$ and $n$.
\end{proof}
\section{A priori estimate up to $\C^{1,1}$}\label{sectionC11}
First, we deal with the uniform $\C^2$ boundary estimate on $X \times \partial \Sigma$. The technique is to construct local barrier functions near the boundary, which is completely parallel to \cite{caffarelli1985dirichlet, chen2000space, guan1998dirichlet}. The statement is the following:
\begin{lem}\label{c2pribdyest}
Let the data $(X \times \Sigma,  \Theta_\Psi, \widetilde\varphi)$ be the same as in Proposition \ref{c1priest}. Let $\nabla$ denote the Levi-Civita connection of $\Theta_\Psi$ on $X \times \Sigma$. Then 
\begin{align*}
    \sup_{X\times \partial \Sigma}|\nabla^2 \widetilde\varphi|_{\Theta_\Psi} \leq C,
\end{align*}
where the constant $C$ only depends on $\sup_{X\times \Sigma} |\nabla \widetilde\varphi|_{\Theta_\Psi}$ and on $ (X \times \Sigma, \Theta_\Psi) $.
\end{lem}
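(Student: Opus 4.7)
The boundary $X \times \partial \Sigma$ is $(X \times \{0\}) \sqcup (X \times \{1\})$, and $\widetilde\varphi \equiv 0$ on it. Consequently, all second coordinate derivatives of $\widetilde\varphi$ in directions purely tangent to the boundary (i.e.\ along $X$ or along $\partial_s$) vanish identically there, and the Levi-Civita Hessian $\nabla^2_{ij}\widetilde\varphi$ in these directions reduces to $-\Gamma^t_{ij}\partial_t\widetilde\varphi$, which is a uniformly bounded quantity by the $\C^1$ boundary estimate noted just before Proposition \ref{c1priest} and the uniformly bounded geometry of $\Theta_\Psi$. What is left to bound, uniformly in $\varepsilon$ and uniformly in the base point as it tends to infinity in the ALE end of $X$, are (i) the mixed second coordinate derivatives $\partial_t \partial_T \widetilde\varphi$ for tangential operators $T$, and (ii) the pure normal second coordinate derivative $\partial_t^2\widetilde\varphi$. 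My plan is to follow the classical Caffarelli-Kohn-Nirenberg-Spruck \cite{caffarelli1985dirichlet} / Guan \cite{guan1998dirichlet} barrier argument, as adapted to the geodesic equation in Chen \cite{chen2000space}, paying careful attention to uniformity with respect to the base point.

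For (i), fix $p_0 \in X \times \{0\}$ and work in a local holomorphic chart $(z^1,\ldots,z^n, w = t+is)$ on a half-ball $U_\delta = \{|z - z_0|<\delta,\,0 \leq t < \delta\}$. Writing $F := \Theta_\Psi + dd^c\widetilde\varphi$ and differentiating the local form of $(E_\varepsilon)$, $\log\det(F_{i\bar j}) = \log\upsilon(\varepsilon) + \log\det((\Theta_\Psi)_{i\bar j})$, by a tangential first-order operator $T \in \{\partial_{x^j}, \partial_{y^j}, \partial_s\}$ gives
\begin{equation*}
F^{i\bar j}(T\widetilde\varphi)_{i\bar j} = T\log\upsilon(\varepsilon) + T\log\det((\Theta_\Psi)_{i\bar j}) - F^{i\bar j}T(\Theta_\Psi)_{i\bar j}.
\end{equation*}
The $\log\varepsilon$ singularity drops out under the outer derivative $T$, and the right-hand side is bounded on $U_\delta$ uniformly in $p_0$ and in $\varepsilon$ thanks to the explicit form \eqref{coeffEvarepsilon}--\eqref{coeffEvarepsilonf} of $\upsilon(\varepsilon)$ and the ALE decay \eqref{decayale}. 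Since $T\widetilde\varphi \equiv 0$ on $U_\delta \cap \{t = 0\}$, a local barrier of the classical form $h = A_1(H - \widetilde\varphi) + A_2 |z - z_0|^2 - A_3 t$ (or a variant built from the subsolution $\Psi$ and supersolution $\Psi + H$ from section \ref{sectionC0} via concavity of $\log\det$), with $A_1 \gg A_2 \gg A_3$ chosen depending only on the uniform pinching $c\Theta \leq F \leq C\Theta$ and the uniform $\C^1$ bound from Proposition \ref{c1priest}, satisfies $F^{i\bar j}(h \pm T\widetilde\varphi)_{i\bar j} \leq 0$ in $U_\delta \cap \{t > 0\}$ while $h \pm T\widetilde\varphi \geq 0$ on the parabolic boundary of $U_\delta$. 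The maximum principle then gives $|\partial_t(T\widetilde\varphi)(p_0)| \leq |\partial_t h(p_0)| \leq C$, the desired mixed second derivative bound.

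For (ii), on $\{t = 0\}$ the tangential block $(\widetilde\Phi_{a\bar b})_{1 \leq a,b \leq n}$ equals $(\omega_{\psi_0})_{a\bar b}$, which is positive definite with eigenvalues uniformly pinched in terms of $\psi_0$ and the ALE geometry, and similarly at $t = 1$. Combining this positivity of the tangential block with the mixed bounds from (i) and expanding the Monge-Amp\`ere equation $\det(F_{i\bar j}) = \upsilon(\varepsilon)\det((\Theta_\Psi)_{i\bar j})$ by cofactors along the last row and column yields a uniform upper bound on the last diagonal entry $F_{w\bar w} = \widetilde\Phi_{w\bar w}$ at $p_0$; a matching lower bound follows from $F > 0$ combined with the upper bound on the trace of $F$ in the $X$-directions. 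I expect the main obstacle to be verifying that all constants entering the barrier construction and the cofactor expansion---the pinching, the curvature of $\Theta_\Psi$, and the pointwise size of the tangential Hessians of $\psi_0, \psi_1$---remain uniform as $p_0 \to \infty$ in the ALE end. This uniformity is built into the setup: \eqref{decayale} provides bounded Christoffel symbols and curvature of $\omega$ in the asymptotic chart, $\psi_0, \psi_1 \in \hat{\C}^{\infty}_{-\gamma}$ provides uniform $\C^{1,1}$ bounds on the boundary data, and together these imply uniform $\C^{1,1}$ control and uniform positivity of $\Theta_\Psi$ over the entire noncompact base, so the classical compact-domain argument goes through with constants independent of $p_0$.
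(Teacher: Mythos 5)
Your outline matches the paper's: $\widetilde\varphi\equiv 0$ on the boundary kills the tangential--tangential block, a CKNS/Guan-style barrier gives tangential--normal, and the MA equation together with positivity of the tangential block then pins down normal--normal; the paper likewise only writes out the tangential--normal step. Your uniformity discussion at the end is also sound. But there is a genuine sign error in the barrier you wrote down, and it is fatal to the argument.

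You propose
\[
h = A_1\,(H-\widetilde\varphi) + A_2\,|z-z_0|^2 - A_3 t
\]
and claim $F^{i\bar j}(h\pm T\widetilde\varphi)_{i\bar j}\le 0$. Compute the main piece in $g$-normal coordinates at a point where $F=\Theta_\Psi+dd^c\widetilde\varphi$ is diagonal. Since $H=2t(1-t)$ has $H_{w\bar w}=-1$ and all other entries zero, and since
\[
F^{i\bar j}\widetilde\varphi_{i\bar j}=F^{i\bar j}\big(F_{i\bar j}-(\Theta_\Psi)_{i\bar j}\big)=(n+1)-F^{i\bar j}(\Theta_\Psi)_{i\bar j}\le (n+1)-m\sum_i F^{i\bar i},
\]
one finds
\[
F^{i\bar j}(H-\widetilde\varphi)_{i\bar j}\;\ge\; -F^{w\bar w}-(n+1)+m\sum_i F^{i\bar i}.
\]
The term $m\sum_i F^{i\bar i}$ is \emph{positive} and blows up precisely in the degenerate regime $\upsilon(\varepsilon)\to 0$ that makes this lemma nontrivial (recall $\sum_iF^{i\bar i}\ge (n+1)(\det F)^{-1/(n+1)}=(n+1)(\upsilon\det\Theta_\Psi)^{-1/(n+1)}$). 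Adding $A_2|z-z_0|^2$ only makes things worse since $F^{i\bar j}(|z-z_0|^2)_{i\bar j}=\sum_i F^{i\bar i}>0$, and the $-A_3 t$ term is pluriharmonic and contributes nothing. So $F^{i\bar j}h_{i\bar j}$ is large and positive, not $\le -C(1+\sum F^{i\bar i})$, and the maximum principle cannot be closed.

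The fix, which is what the paper does, is to orient the ``solution minus subsolution'' piece the other way: take the auxiliary function $v=\widetilde\varphi + N\,t(1-t)$ (not $H-\widetilde\varphi$). Here $\widetilde\varphi$ is nonnegative and plays the role of solution minus the subsolution $0$ (i.e.\ $\Psi$), and the extra $N t(1-t)$ term produces $-N F^{w\bar w}$, which combined with $-m\sum_iF^{i\bar i}$ and the arithmetic--geometric mean inequality, together with $\det F=\upsilon(\varepsilon)\det\Theta_\Psi\le\det\Theta_\Psi$, yields
\[
F^{i\bar j}v_{i\bar j}\le -\tfrac{m}{2}\sum_i F^{i\bar i}
\]
for $N$ chosen appropriately, independently of $\varepsilon$. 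The barrier is then $w = A v + B|z-z_0|^2 \pm T\widetilde\varphi$, with $A\gg B$ chosen so that $-A\frac{m}{2}\sum F^{i\bar i}$ absorbs $B\sum F^{i\bar i}+C(1+\sum F^{i\bar i})$, using again $\sum F^{i\bar i}\ge n+1$. Note that your own parenthetical remark already correctly identifies $\Psi$ as the subsolution; the issue is just that your explicit formula $A_1(H-\widetilde\varphi)$ encodes ``supersolution minus solution'' instead of ``solution minus subsolution,'' which flips the sign of the crucial term. With the sign corrected, the rest of your argument — the boundary inequalities, the $-A_3 t$ normalization, the cofactor expansion for $F_{w\bar w}$ using positivity of the tangential block $(\omega_{\psi_0})_{a\bar b}$, and the uniformity discussion — all goes through and is consistent with the paper.
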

\begin{proof} Fixing a point $p \in X \times \partial \Sigma$, we pick the local holomorphic coordinates around the point $p$ such that the coordinates system is normal in $X$ and $\Sigma$ direction, we still pick the standard coordinate function of the annulus, denoted by $\{x_1, \ldots, x_{2n}, x_{2n+1} =t, x_{2n} =s\}$ and the corresponding holomorphic coordinates, $z_{i} = x_{2i-1} + i x_{2i}$. Throughout the proof, we assume the metric tensor $g$ associated with $\Theta_\Psi$ satisfies $m \delta_{ij} \leq g_{i\overline{j}} \leq M \delta_{ij}$. In general, we need to prove the boundary $C^2$ estimate at $p$ in tangential-tangential, tangential-normal and normal-normal directions respectively. However, the tangential-tangential is trivial in our case and the normal-normal estimate follows directly from the tangential-normal estimate. Here, we briefly summarize the proof of tangential-normal estimate by explicitly constructing the barrier functions. 

Consider a small neighborhood near $p$, $ \displaystyle B'_{\delta}(p) = (X \times \Sigma) \cap  B_\delta (p)$, where the small constant $\delta$ will be determined later. Firstly, we construct the following auxiliary function in $B'_\delta (p)$,
\begin{align}\label{auxfun}
    v = \widetilde\varphi + N t(1-t),
\end{align}
where $N$ is a large constant to be determined. Then, it can be easily checked that 
\begin{align*}
    \widetilde{\Delta} v \leq n+1 - m \sum_{i}\widetilde{g}^{i\overline{i}} -  N \widetilde{g}^{n+1,\overline{n+1}},
\end{align*}
where $\widetilde{g}$ again denotes the metric tensor associated with $\Theta_{\widetilde{\Phi}} = \Theta_\Psi + dd^c \widetilde\varphi$ and $\widetilde{\Delta}$ denotes the corresponding Laplacian. Notice that
\begin{align*}
    -\frac{m}{2} \sum_i \widetilde{g}^{i\overline{i}} -N \widetilde{g}^{n+1, \overline{n+1}} \leq -  \frac{mN^{\frac{1}{n+1}}}{2(\det \widetilde{g})^{\frac{1}{n+1}}} = -\frac{m N^{\frac{1}{n+1}}}{ 2 \upsilon(\varepsilon)^{\frac{1}{n+1}}} (\det g)^{-\frac{1}{n+1}}.
\end{align*}
By taking $N = [(n+1) (2/m )]^{n+1} \max_{B_\delta'(p)} (\det g)$, we have $\displaystyle \widetilde{\Delta} v \leq -\frac{m}{2} \sum_{i} \widetilde{g}^{i\overline{i}}$. Noting that $\widetilde{\varphi} = \widetilde{\Phi} -\Psi \geq 0$, we have $v \geq 0$ on $\partial B'_\delta(p)$. Then, the barrier functions can be constructed as follows:
\begin{align*}
    w = A v + B |z|^{2} \pm \frac{\partial}{ \partial x_k} \widetilde\varphi, \quad \text{  for } 1\leq k \leq 2n \text{ or } k =2n+2.
\end{align*}
By differentiating the Monge-Amp\`{e}re equation $(E_\varepsilon)$ in the local coordinates, 
\begin{align*}
    \pm\widetilde{\Delta} \Big( \frac{\partial}{\partial x_k} \widetilde\varphi \Big) =\pm\big( \widetilde{g}^{i\overline{j}} (\widetilde{g})_{i\overline{j}, k} - \widetilde{g}^{i\overline{j}} g_{i\overline{j}, k} \big) \leq C (1 + \sum \widetilde{g}^{i\overline{i}}),
\end{align*}
where $A$ and $B$ are large positive constants to be determined. According to the $\C^1$ estimate of $\widetilde{\varphi}$, we assume that $|\partial_k \widetilde{\varphi}| \leq C$. By picking a very large constant $B$ such that, on $\partial B'_\delta (p)$, $B |z|^2 \pm \partial_k \widetilde{\varphi} \geq 0$, we have $w \geq 0$ on $\partial B'_\delta(p)$. Then, we choose a large constant $A$ such that $\widetilde{\Delta} w \leq 0$ in $B'_\delta (p)$. Then, by maximum principle, $w\geq 0$ in $B'_\delta (p)$. Together with the fact that $w(p) =0$, we have $ \partial_t w \geq 0$ at $p$, which implies the tangential-normal estimate on the boundary.
\end{proof}

Lemma \ref{c2pribdyest} together with Yau's standard calculation on Laplacian estimate implies the following interior Laplacian estimate, referring to \cite{Yau78}.

\begin{lem} \label{Yaulaest}
Let $\widetilde{\varphi}$ be the solution of $(E_{\varepsilon})$ and $\Delta$, $\widetilde{\Delta}$, the Laplacian operators of $g = \Theta_\Psi$ and $\widetilde{g} = \Theta_{\widetilde\Phi} = \Theta_\Psi + dd^c\widetilde\varphi$ respectively. Then, for any constant $C$,
\begin{align*}
    \widetilde{\Delta} \big(e^{-C \widetilde\varphi} (n+1 + \Delta \widetilde\varphi) \big) \geq & \quad e^{-C \widetilde\varphi}  \big( \Delta \log \upsilon(\varepsilon) - (n+1)^2 \inf_{i\ne l} (R_{i\overline{i}  l \overline{l}}) \big)\\
    & - C e^{-C \widetilde\varphi} (n+1) (n+1+ \Delta \widetilde\varphi)\\ & + (C + \inf_{i\ne l} (R_{i\overline{i}l \overline{l}})) e^{-C\widetilde\varphi} (n+1 + \Delta \widetilde\varphi)^{1+\frac{1}{n}} \upsilon( \varepsilon)^{-1},
\end{align*}
where $R$ denotes the curvature tensor of $g$. From this, we can deduce the estimate
\begin{align*}
    \sup_{X \times \Sigma}|\Delta \widetilde\varphi| \leq C (1+ \sup_{X \times \partial \Sigma} |\Delta \widetilde\varphi|),
\end{align*}
where $C$ only depends on $\sup_{X \times \Sigma} \widetilde\varphi$ and on a negative lower bound of $\inf_{i\ne l}(R_{i\overline{i}l \overline{l}})$.
\end{lem}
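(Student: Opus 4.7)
The plan is to carry out the classical Yau Laplacian estimate \cite{Yau78} in complex dimension $n+1$, combined with the generalized Wu--Yau maximum principle of Lemma \ref{openmax1} to cope with the noncompactness of $X \times \Sigma$.

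\textbf{Pointwise inequality.} At a point $p \in X \times \Sigma$, I would choose holomorphic normal coordinates for $g = \Theta_\Psi$ that simultaneously diagonalize $\widetilde{g} = \Theta_{\widetilde\Phi}$, with eigenvalues $\lambda_1,\ldots,\lambda_{n+1}$. Applying $\widetilde\Delta = \widetilde{g}^{i\bar j}\partial_i\partial_{\bar j}$ to the logarithm of the Monge--Amp\`ere equation in $(E_\varepsilon)$, commuting derivatives to extract the bisectional curvature $R_{i\bar i l\bar l}$ of $g$, and absorbing the positive cubic term $\widetilde{g}^{i\bar j}\widetilde{g}^{k\bar l}\widetilde\varphi_{i\bar l q}\widetilde\varphi_{\bar j k\bar q}$ via Cauchy--Schwarz, yields the familiar Yau inequality $\widetilde\Delta \log(n{+}1+\Delta\widetilde\varphi) \geq (\Delta\log\upsilon(\varepsilon) - (n{+}1)^2\inf R)/(n{+}1+\Delta\widetilde\varphi) + (\inf R)\,\tr_{\widetilde{g}}g$, where $\inf R = \inf_{i\neq l}R_{i\bar i l\bar l}$. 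I would then expand $\widetilde\Delta(e^{-C\widetilde\varphi}(n{+}1+\Delta\widetilde\varphi))$ by the product rule. Using the decomposition $\widetilde\Delta(n{+}1+\Delta\widetilde\varphi) = (n{+}1+\Delta\widetilde\varphi)\widetilde\Delta\log(n{+}1+\Delta\widetilde\varphi) + |\nabla(n{+}1+\Delta\widetilde\varphi)|^2_{\widetilde{g}}/(n{+}1+\Delta\widetilde\varphi)$, the identity $\widetilde\Delta\widetilde\varphi = (n{+}1) - \tr_{\widetilde{g}}g$, and a Cauchy--Schwarz bound on the cross term $-2C\re\langle\nabla(n{+}1+\Delta\widetilde\varphi),\nabla\widetilde\varphi\rangle_{\widetilde{g}}$ to kill the gradient-squared contributions, I would arrive at exactly the three terms in the stated inequality. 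The coercive form $(n{+}1+\Delta\widetilde\varphi)^{1+1/n}\upsilon(\varepsilon)^{-1}$ of the final term follows from a Maclaurin-type lower bound applied to $\tr_{\widetilde{g}}g = \sum 1/\lambda_p$ using $\sum \lambda_p = n{+}1+\Delta\widetilde\varphi$ and $\prod \lambda_p = \upsilon(\varepsilon)$.

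\textbf{Sup estimate.} Next, set $F = e^{-C\widetilde\varphi}(n{+}1+\Delta\widetilde\varphi)$. Since $\widetilde\varphi$ is smooth for fixed $\varepsilon > 0$ and $\C^0$-bounded by Section \ref{sectionC0}, $F \in L^\infty(X\times\Sigma)$. I would choose $C$ large enough that $C + \inf_{i\neq l}R_{i\bar i l\bar l} \geq 1$ uniformly on $X \times \Sigma$, which is possible because $\Theta_\Psi = \Theta + dd^c\Psi$ has globally bounded bisectional curvature: $\Theta$ has bounded geometry in the ALE coordinates and $\Psi$ is uniformly $\C^{1,1}$. If $\sup_{X\times\Sigma} F \leq \sup_{X\times\partial\Sigma} F$ there is nothing to prove; otherwise Lemma \ref{openmax1}, applicable because $\widetilde g$ is uniformly equivalent to the model metric in the asymptotic coordinates by the ellipticity bounds of $(E_\varepsilon)$, produces a sequence $\{x_k\} \subset X\times\Sigma^\circ$ with $F(x_k) \to \sup F$ and $\limsup \widetilde\Delta F(x_k) \leq 0$. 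Substituting into the pointwise inequality and exploiting the uniform bounds on $\widetilde\varphi$, $\Delta\log\upsilon(\varepsilon)$, and $\inf R$, the coercive last term dominates the linear correction $-C(n{+}1)F$ as soon as $\Delta\widetilde\varphi(x_k)$ exceeds a universal threshold, contradicting $\widetilde\Delta F(x_k) \leq o(1)$. This forces $\sup F \leq \max(\sup_{X\times\partial\Sigma}F,\,C_0)$, which combined with $\Delta\widetilde\varphi \geq -(n{+}1)$ yields the claimed bound on $\sup|\Delta\widetilde\varphi|$.

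\textbf{Main obstacle.} The central difficulty is ensuring that every constant entering the argument is genuinely uniform on the noncompact space $X \times \Sigma$: the bisectional curvature of $\Theta_\Psi$ must be uniformly bounded below (so that one finite $C$ can absorb $\inf R$ globally), and $\widetilde g$ must be uniformly comparable to a model metric (so that Lemma \ref{openmax1} applies to $F$). Both requirements follow from the $\C^{1,1}$ structure of the endpoint data $\psi_0,\psi_1$ together with the two-sided ellipticity $\lambda\Theta \leq \Theta_{\widetilde\Phi} \leq \Lambda\Theta$ built into $(E_\varepsilon)$; the interplay with the Maclaurin bound is precisely what produces the coercive term $(n{+}1+\Delta\widetilde\varphi)^{1+1/n}\upsilon(\varepsilon)^{-1}$ needed to close the argument at infinity.
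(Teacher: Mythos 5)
Your proof proposal is correct and reconstructs precisely the argument the paper leaves to a citation: Yau's Laplacian estimate \cite{Yau78} carried out in complex dimension $n+1$ for the $\varepsilon$-geodesic Monge--Amp\`ere equation, combined with the Wu--Yau generalized maximum principle of Lemma \ref{openmax1} (applied to $F = e^{-C\widetilde\varphi}(n+1+\Delta\widetilde\varphi)$, which is admissible thanks to the two-sided ellipticity bound $\lambda\Theta \leq \Theta_{\widetilde\Phi}\leq\Lambda\Theta$) to replace the interior maximum used in the compact case. One small point worth flagging: the Maclaurin--Newton bound you invoke, $\tr_{\widetilde{g}}g \geq \bigl((n+1+\Delta\widetilde\varphi)/\upsilon(\varepsilon)\bigr)^{1/n}$, actually produces the factor $\upsilon(\varepsilon)^{-1/n}$ rather than $\upsilon(\varepsilon)^{-1}$ in the coercive term; this discrepancy is inherited from the lemma statement itself and is immaterial to the resulting sup estimate because $\upsilon(\varepsilon)\leq 1$.
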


Lemma \ref{Yaulaest}, together with Lemma \ref{c2pribdyest}, implies that there exists a uniform constant $C$ only depending on $\sup_{X \times \Sigma} \Delta \widetilde\varphi$ such that $ \varepsilon C^{-1} g_{i\overline{j}} \leq  \widetilde{g}_{i\overline{j}} \leq C g_{i\overline{j}}$. This is already enough to apply the standard local regularity theory of the Monge-Amp\`ere equation to prove $\mathcal{C}^{k,\alpha}$ estimates for any $k \geq 2$ that depend on a positive lower bound for $\varepsilon$. In this way the equation $(E_\varepsilon)$ can be solved using the continuity path $(E_s)$, $s \in [\varepsilon,1]$. However, in order to construct an honest geodesic by letting $\varepsilon \to 0$, we require a full $\C^{1,1}$ estimate which is uniform in $\varepsilon$. In \cite{chu2017regularity}, $\C^{1,1}$ regularity is proved in the compact case. The method can also be applied in the ALE  K\"ahler setting. 

\begin{pro}
Let the data $(X \times \Sigma,  \Theta_\Psi, \widetilde\varphi)$ be the same as in Proposition \ref{c1priest}. If $\widetilde\varphi$ lies in the space $\C^2(X\times \Sigma,\Theta_\Psi)$, then there exists a constant $C$ such that
\begin{align*}
    |\nabla^2 \widetilde\varphi|_{\Theta_\Psi} \leq C,
\end{align*}
where $\nabla$ again denotes the Levi-Civita connection of the metric $\Theta_\Psi$ and $C$ depends only on $(X \times \Sigma, \Theta_\Psi)$ and on $\sup_{X\times \Sigma}|\widetilde\varphi|$, $\sup_{X \times \Sigma} |\nabla \widetilde\varphi|_{\Theta_\Psi}$, $\sup_{X \times \Sigma} |\Delta \widetilde\varphi|$, $\sup_{X \times \partial \Sigma} |\nabla^2 \widetilde\varphi|_{\Theta_\Psi}$.
\end{pro}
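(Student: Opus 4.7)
The plan is to transplant the test-function argument of Chu--Tosatti--Weinkove \cite{chu2017regularity} from the compact setting to $X\times\Sigma$ by using Yau's generalized maximum principle (Lemma \ref{openmax1}) in place of the standard maximum principle. Since the complex Hessian of $\widetilde\varphi$ is degenerate in the degenerate Monge--Amp\`ere problem that we eventually want to reach, I would work with the largest eigenvalue $\lambda_1$ of the real Hessian $\nabla^2 \widetilde\varphi$ measured with respect to $\Theta_\Psi$, and consider a test function of the form
\begin{equation*}
Q = \log \lambda_1(\nabla^2 \widetilde\varphi) + \phi(|\nabla \widetilde\varphi|^2_{\Theta_\Psi}) + \psi(\widetilde\varphi),
\end{equation*}
where $\phi$ and $\psi$ are the one-variable auxiliary functions introduced in \cite{chu2017regularity}. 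The Laplacian bound from Lemma \ref{Yaulaest} together with the positivity $\Theta_\Psi+dd^c\widetilde\varphi>0$ gives $0\le\lambda_1\le C$, and combined with the $\C^0$ bound from Section \ref{sectionC0} and the $\C^1$ bound from Proposition \ref{c1priest} this shows that $Q$ is bounded above on $X\times\Sigma$.

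If the supremum of $Q$ is attained on $X\times\partial\Sigma$, the boundary Hessian bound Lemma \ref{c2pribdyest} already delivers the required control. Otherwise, I apply Lemma \ref{openmax1}---whose hypothesis that $\widetilde g=\Theta_\Psi+dd^c\widetilde\varphi$ is uniformly equivalent to a fixed reference metric in the asymptotic coordinates is supplied by Lemma \ref{Yaulaest}---to find a sequence $\{x_k\}\subset X\times\Sigma^\circ$ with $Q(x_k)\to \sup Q$, $|\nabla Q|_{\Theta_\Psi}(x_k)\to 0$ and $\limsup_{k\to\infty} \widetilde{\Delta} Q(x_k)\le 0$. At each such point I use the standard perturbation trick, replacing $\nabla^2\widetilde\varphi$ by $\nabla^2\widetilde\varphi-B$ for a small symmetric endomorphism $B$ breaking the multiplicity of $\lambda_1$, and choose local holomorphic coordinates adapted to the corresponding eigenspace. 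Differentiating the $\varepsilon$-geodesic equation $\det\widetilde g=\upsilon(\varepsilon)\det g$ once and twice and feeding the result into the Chu--Tosatti--Weinkove computation should yield, for appropriate choices of $\phi$ and $\psi$, an inequality of the form
\begin{equation*}
0\ge c_1\,\lambda_1(x_k) - c_2,
\end{equation*}
where $c_1,c_2$ depend only on the data permitted by the proposition. Letting $k\to\infty$ and then removing the perturbation $B$ yields $\sup\lambda_1\le C$, and combined with $\lambda_1\ge 0$ and Lemma \ref{Yaulaest} this provides the full $\C^{1,1}$ bound.

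The main obstacle is to verify that every constant produced along the way is uniform in $\varepsilon$, despite the fact that the background ellipticity of $(E_\varepsilon)$ degenerates as $\varepsilon\to 0$; concretely, the algebraic identities comparing $\widetilde{\Delta}\lambda_1$ to derivatives of $\log\det\widetilde g$ must be organised so that terms of the form $\upsilon(\varepsilon)^{-1}(\,\cdot\,)$ either cancel or are dominated by the good third-order term $\sum_p|\widetilde\varphi_{11p}|^2/\widetilde g_{p\bar p}$ that Chu--Tosatti--Weinkove extract. A secondary difficulty is that the maximum principle is only approximate, so error terms of size $\mathbf{e}\to 0$ must be propagated through each step, exactly as in the proofs of Proposition \ref{c1priest} and Lemma \ref{openmax2}. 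Finally, although the curvature and higher covariant derivatives of $\Theta_\Psi$ decay in the ALE asymptotic chart, one must check that the constants they contribute to $Q$ depend only on $(X\times\Sigma,\Theta_\Psi)$; once these points are settled, the argument runs in parallel with the compact case of \cite{chu2017regularity}.
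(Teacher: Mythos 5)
Your plan is essentially the paper's own proof: the test quantity is $Q = \log\lambda_1(\nabla^2\widetilde\varphi) + h(|\nabla\widetilde\varphi|^2_{\Theta_\Psi}) - A\widetilde\varphi$, the maximum principle is made approximate via the ALE radial function, the Chu--Tosatti--Weinkove perturbation is used to make $\lambda_1$ smooth near the (approximate) maximum, and the key inequality is taken from their computation. Two points deserve attention. First, your claim that the Laplacian bound from Lemma \ref{Yaulaest} together with $\Theta_\Psi + dd^c\widetilde\varphi > 0$ already yields $0\le\lambda_1\le C$ is not correct: that argument bounds only the \emph{complex} Hessian, whereas $\lambda_1$ is the largest eigenvalue of the full real Hessian, which could a priori blow up even with bounded complex Hessian (this gap is exactly what Chu--Tosatti--Weinkove's argument closes). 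What actually makes $Q$ bounded above is the hypothesis, stated explicitly in the proposition, that $\widetilde\varphi \in \C^2(X\times\Sigma,\Theta_\Psi)$; this a priori (possibly non-uniform) bound is the one you should invoke, and then the maximum-principle argument upgrades it to a uniform bound. Second, rather than quoting Lemma \ref{openmax1} as a black box and doing the eigenvalue perturbation ``at each $x_k$'', the paper works directly with $Q_{\mathbf{e}} = Q - \mathbf{e} r$, which attains an honest interior maximum at a point $p = x_{\mathbf{e}}$; at that point the perturbed quantity $\hat Q_{\mathbf{e}}$ also has a local maximum, which is what makes the eigenvalue-perturbation computation clean, and one gets $|d\hat Q(p)|\le C\mathbf{e}$, $\Delta\hat Q(p)\le C\mathbf{e}$. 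Your proposal gestures at exactly this bookkeeping, and since $\lambda_1'\le\lambda_1$ and $\lambda_1'(p)=\lambda_1(p)$ the perturbation only decreases $Q$, so the argument goes through; you should just state it at an actual maximum of the $\mathbf{e}$-modified quantity rather than along a sequence from Lemma \ref{openmax1}.
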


\begin{proof}
We again write $g$ for the metric tensor associated with $\Theta_\Psi$. Let $\lambda_1(\nabla^2 \widetilde\varphi)$ be the largest eigenvalue of the real Hessian $\nabla^2 \widetilde\varphi$. By observing that there exists a uniform constant $C$ such that $\lambda_1 (\nabla^2 \widetilde\varphi) \leq |\nabla^2 \widetilde\varphi|_g \leq C \lambda_1 (\nabla^2 \widetilde\varphi) +C $, it suffices to prove that $\lambda_1 (\nabla^2 \widetilde\varphi)$ has a uniform upper bound. Consider the following quantity,
\begin{align*}
    Q = \log \lambda_1 (\nabla^2 \widetilde\varphi) + h (|\nabla \widetilde\varphi|_g^2) -A \widetilde\varphi,
\end{align*}
where $h$ is defined to be $\displaystyle h(s) = -\frac{1}{2} \log \big (1 + \sup_{X \times \Sigma} |\nabla \widetilde\varphi|_g^2 -s  \big)$ and $A$ is a uniform large positive constant to be determined later. We can further modify this quantity to $Q_{\mathbf{e}} = Q - \mathbf{e} r$, where $\mathbf{e}$ is a small positive constant to be determined later.  According to the assumption that $|\nabla^2 \widetilde\varphi|$ is bounded and hence so is $Q$, the modified quantity $Q_\mathbf{e}$ attains its maximum at some point $x_\mathbf{e} \in X \times \Sigma$. The same argument as in Lemma \ref{openmax1} implies that $\lim_{\mathbf{e} \rightarrow 0} Q(x_\mathbf{e}) = \sup_{X \times \Sigma} Q $. In the following, we assume $\mathbf{e}$ is small enough such that $|Q(x_\mathbf{e})- \sup_{X \times \Sigma} Q| < 1 $ and always write $p=x_\mathbf{e}$. Since $Q_\mathbf{e}$ might not be smooth at $p$ if the eigenspace of $\lambda_1(\nabla^2 \widetilde\varphi) (p)$ has dimension greater than one, a perturbation argument used in \cite{chu2017regularity} can be applied to the quantity $Q_\mathbf{e}$ here. 

Fix normal coordinates $(z_1, \ldots, z_{n+1})$ with respect to $g$ at $p$ such that $(\widetilde\varphi_{i\overline{j}})$ is diagonal at $p$. Define the corresponding real coordinates $(x_1, \ldots, x_{2n})$ by $z_i = x_{2i-1} + i x_{2i}$. Let $\lambda_1 \geq \lambda_2 \geq \ldots \geq \lambda_{2n}$ be the eigenvalues of $\nabla^2 \widetilde\varphi$ at $p$ and $V_1, \ldots,V_{2n}$, the corresponding unit eigenvectors at $p$. The eigenvectors can be extended to vector fields with constant coefficients in a small neighborhood of $p$, also denoted by $V_1, \ldots, V_{2n}$, and can be represented by $V_\alpha = V^\beta_{\alpha} {\partial_{x_\beta}}$ in the local coordinates. The perturbation argument is to perturb $\nabla^2 \widetilde\varphi$ locally around $p$ and to ensure that $\lambda_1 > \lambda_2$ near $p$. Precisely, consider the following locally defined tensor field, 
\begin{align*}
    P = \sum_{\alpha, \beta} \big(\delta_{\alpha\beta} - V^{\alpha}_1 V^{\beta}_1 \big) dx_\alpha \otimes dx_\beta.
\end{align*}
Let $\lambda_i' = \lambda_i (\nabla^2 \widetilde\varphi - P)$. Then, one can easily check that $\lambda'_1 (p)= \lambda_1(p)$ and $\lambda'_i (p)= \lambda_i (p)-1$ for $i \geq 2$. Hence, there exists a neighborhood of $p$ such that $\lambda'_1 > \lambda'_2 \geq \ldots \geq \lambda'_{2n}$ and $\lambda'_1 \leq \lambda_1 $. Consider the following perturbed quantities,
\begin{align*}
    \hat{Q} = \log \lambda'_1 + h(|\nabla \widetilde\varphi|^2_g) -A \widetilde\varphi, \quad \hat{Q}_{\mathbf{e}} = \hat{Q} - \mathbf{e} r.
\end{align*}
Therefore, $ \hat{Q}_\mathbf{e}$ is a smooth quantity with a local maximum at $p$. Then, we have,
\begin{align*}
  |d \hat{Q}(p)|_g \leq C\mathbf{e}, \quad \Delta \hat{Q} (p) \leq C\mathbf{e}.
\end{align*}

The following inequality follows directly from the calculation in \cite[Lemma 2.1]{chu2017regularity}. The only information we need in the calculation is the second derivative of the Monge-Amp\`{e}re equation at $p$. We will not repeat the details here.
By assuming $\lambda_1' \geq 1$ at $p$, and again writing $\widetilde{g}$ for the metric tensor associated with $\Theta_{\widetilde{\Phi}}=\Theta_\Psi+dd^c\widetilde\varphi$, we have
\begin{align} \label{laQ1}
\begin{split}
    \Delta \hat{Q} \geq & \ 2 \sum_{\alpha> 1} \frac{\widetilde{g}^{i\overline{i}}|\partial_i (\widetilde\varphi_{V_\alpha V_1})|^2}{\lambda_1 (\lambda_1- \lambda_\alpha)} + \frac{\widetilde{g}^{i\overline{i} }\widetilde{g}^{j\overline{j}} \big|V_1 \big( \widetilde{g}_{i\overline{j}}\big)\big|^2}{\lambda_1} - \frac{\widetilde{g}^{i\overline{i}} |\partial_i (\widetilde\varphi_{V_1 V_1})|^2}{\lambda_1^2} \\
    & + h' \sum_{k} \widetilde{g}^{i\overline{i}} \big( |\widetilde\varphi_{ik}|^2 + |\widetilde\varphi_{i\overline{k}}|^2 \big) + h'' \widetilde{g}^{i\overline{i}} \big|\partial_i |\nabla \widetilde\varphi|^2_g \big|\\
    & + (A-B) \sum_{i} \widetilde{g}^{i\overline{i}} -A n,
\end{split}
\end{align}
where the constant $B$ only depends on $(X\times \Sigma, g)$ and $\sup_{X \times \Sigma} |\nabla \widetilde\varphi|_g$. To cancel the annoying terms, we deal with the third term in \eqref{laQ1}, $\displaystyle {{\lambda_1^{-2}}\widetilde{g}^{i\overline{i}} |\partial_i (\widetilde\varphi_{V_1 V_1})|^2}$. To estimate the term, we split it into the following two parts,
\begin{align*}
  & I_1 = (1-2\delta) \frac{\widetilde{g}^{i\overline{i}} |\partial_i (\widetilde\varphi_{V_1 V_1})|^2}{\lambda_1^2}, \\& I_2 = 2\delta  \frac{\widetilde{g}^{i\overline{i}} |\partial_i (\widetilde\varphi_{V_1 V_1})|^2}{\lambda_1^2},
\end{align*}
where $0< \delta <1/4$ is to be determined later. For $I_1$, referring to \cite[Lemma 2.2]{chu2017regularity}, by assuming that $\lambda_1' \geq D /\delta$, where $D$ only depends on $(X\times \Sigma, g) $ and $\sup_{X \times \Sigma} \Delta \widetilde\varphi$, we have 
\begin{align} \label{I1est}
    I_1 \leq \sum_{i,j} \frac{\widetilde{g}^{i\overline{i}} \widetilde{g}^{j\overline{j}} \big|V_1 \big( \widetilde{g}_{i\overline{j}} \big)\big|}{\lambda_1} + 2 \sum_{\alpha >1} \sum_{i} \frac{\widetilde{g}^{i\overline{i}} |\partial_i (\widetilde\varphi_{V_\alpha V_1})|^2}{\lambda_1(\lambda_1 -\lambda_\alpha)} + \sum_{i} \widetilde{g}^{i\overline{i}}.
\end{align}
To estimate $I_2$, recall the fact that $d \hat{Q}_\mathbf{e} = 0$ and apply the derivative of eigenvalues referring to \cite[Lemma 5.2]{chu2017regularity}. Then, we have
\begin{align} \label{I2est}
\begin{split}
    I_2 &= 2\delta \sum_{i} \widetilde{g}^{i\overline{i}} \big|A \widetilde\varphi_i + h' \partial_i |\nabla \widetilde\varphi|^2_g - \mathbf{e} r_i\big|^2 \\
    & \leq 8 \delta A^2 \sum_{i } \widetilde{g}^{i\overline{i}} |\widetilde\varphi_i|^2 + 2 (h')^2 \sum_{i} \widetilde{g}^{i\overline{i}} |\partial_i |\nabla \widetilde\varphi|^2_g|^2 + C\mathbf{e}\sum_i \widetilde{g}^{i\overline{i}}.
\end{split}
\end{align}

Combining \eqref{laQ1}, \eqref{I1est}, \eqref{I2est} and $\Delta \hat{Q} \leq C\mathbf{e}$, then, by assuming $\lambda_1'  \geq D/\delta $, we have 
\begin{align*}
    C \mathbf{e} \geq &\ h' \sum_k \widetilde{g}^{i\overline{i}} \big( |\widetilde\varphi_{ik}|^2 + |\widetilde\varphi_{i\overline{k}}|^2 \big) +\big( h''- 2 (h')^2 \big)  \widetilde{g}^{i\overline{i}} \big| \partial_i |\nabla \widetilde\varphi|^2 \big| \\
    & -8 \delta A^2 \widetilde{g}^{i \overline{i}} |\widetilde\varphi _i|^2  +(A-B -C \mathbf{e}) \sum_{i} \widetilde{g}^{i \overline{i}} -A n.
\end{align*}
Notice that $h'' = 2(h')^2$. Picking $\mathbf{e} \leq 1/C $, $A = B+2$ and $\displaystyle \delta = \big(8 A^2 (\sup_{X \times \Sigma} |\nabla \widetilde\varphi|^2+1)\big)^{-1}$, then we have
\begin{align*}
    h' \sum_k \widetilde{g}^{i\overline{i}} \big(|\widetilde\varphi_{ik}|^2 +|\widetilde\varphi_{i\overline{k}}|^2\big) + \sum_{i} \widetilde{g}^{i\overline{i}} \leq An+1.
\end{align*}
Recall $\widetilde{g}_{i\overline{j}} \leq C g_{i\overline{j}} $, where $ C $ only depends on $\sup_{X \times \Sigma} \Delta \widetilde\varphi$. Hence, at $p$, $\widetilde{g}^{i\overline{i}} \geq C^{-1}$. Then, 
\begin{align*}
    \lambda_1 (p) \leq \max \Big\{ \frac{D}{\delta}, \big\{(An +1)C -n \big\} (1+ \sup_{X \times \Sigma} |\nabla \widetilde\varphi|_g^2) \Big\}.
\end{align*}
Together with the fact that $\sup_{X \times \Sigma} Q \leq Q(p) +1 $, we prove that $\sup_{X \times \Sigma} \lambda_1 $ is bounded by some uniform constant. 
\end{proof}

\section{The asymptotic behavior of $\varepsilon$-geodesics} 
\label{secasymbehavior} 

In this section, we prove Theorem \ref{epsilongeodesicasymptotics} on the asymptotic behavior of $\varepsilon$-geodesics for a fixed $\varepsilon>0$.
We use the notation introduced before Theorem \ref{epsilongeodesicasymptotics} and we assume $\psi_0,\psi_1 \in \HH_{-\gamma} (\omega)$ $(\gamma >0)$.
We are really interested in the case when $-\gamma = 2 -2\tilde{\tau} $ due to theorem \ref{yao2022}. In $\varepsilon$-geodesic equation $(E_\varepsilon)$, the derivatives of function $\upsilon (\varepsilon)$ decays at infinity with order $-\varsigma$, $|\nabla^k \upsilon (\varepsilon)| \leq O(r^{-\varsigma-k})$ with $\varsigma \geq \gamma$ for $k \geq 1$.
Without loss of generality, we assume $\varsigma \geq \gamma >\tau$, otherwise theorem \ref{epsilongeodesicasymptotics} can be proved more easily without iteration (step 3).

We also write $\varphi_\varepsilon = \Phi_\varepsilon -\Psi$, so that the solution is given by $\Theta + dd^c\Phi_\varepsilon = \Theta_\Psi + dd^c\varphi_\varepsilon$ with $\varphi_\varepsilon = 0$ on $X \times \partial\Sigma$. In Aleyasin \cite{aleyasin2014space}, a rough idea is given to prove the asymptotic behavior of $\varepsilon$-geodesics by constructing barrier functions in the (strictly easier) special case where the asymptotic coordinates are $J$-holomorphic and the decay rate of the ALE Kähler metric to the Euclidean metric is high enough. However, even in this special case, the details are more involved than what is suggested in \cite{aleyasin2014space}. Here we give a complete proof in the general setting.

\subsubsection*{Step 1: Differentiating the Monge-Amp\`{e}re equation.}

The Monge-Amp\`{e}re equation can be written explicitly in the asymptotic coordinates of $ X \times \Sigma$. As the complex structure $J$ of $X$ does not coincide with the Euclidean complex structure $J_0$ of the asymptotic coordinates in general, we will use real coordinates for clarity. By passing to the universal covering of the end, we are able to work with the global coordinates. Precisely, let $\{z_1,\ldots, z_n\}$ be the asymptotic complex coordinates of $\CC^n \backslash B_R$ and $w = t+is$ the complex coordinate of $\Sigma$. The corresponding real coordinates are $\{x_{1}, \ldots, x_{2n},$ $x_{2n+1} = t, x_{2n+2}={s}\}$, where $z_k= x_{2k-1} + i x_{2k}$ for $k = 1,\ldots,n$. From now on:
\begin{itemize}
    \item[$\bullet$] Latin indices $i,j, \ldots$ will denote the real coordinates from $1$ to $2n+2$.
    \item[$\bullet$] Greek indices $ \alpha, \beta, \ldots$ will denote the real coordinates from $1$ to $2n$.
    \item[$\bullet$] The bold Greek indices $\boldsymbol{\mu},\boldsymbol{\nu}$ will denote the real coordinates from $2n+1$ to $2n+2$.
\end{itemize}
In these coordinates, we write the Riemannian metric tensors corresponding to $\Theta_\Psi$ and $\Theta_\Psi + dd^c \varphi_\varepsilon $ as $g_{ij}$ and $(g_{\varphi_\varepsilon})_{ij}$, respectively.

Throughout this section, we work in the asymptotic chart of $X$. This allows us to use the Euclidean metric on $(\RR^{2n} \setminus B_R) \times \Sigma$ as a reference metric to measure derivatives. This is helpful because it enables us to write down equations with a good structure. Let $|\cdot|_0$ denote the Euclidean length, $\nabla_0$ the Euclidean Levi-Civita connection and $\nabla_{0,X}$ ($\nabla_{0,\Sigma}$) the component of $\nabla_0$ acting only in the space (time) directions on $(\RR^{2n}\setminus B_R) \times \Sigma$.

Then, the equation $(E_\varepsilon)$ can be written as 
\begin{align} \label{maasym1}
    \sqrt{\det{\big( (g_{{\varphi_\varepsilon}})_{ij} \big)}} = \upsilon (\varepsilon) \sqrt{\det(g_{ij})}.
\end{align}
Recall that $\upsilon$ satisfies conditions in \eqref{conupsilon}.
By differentiating the log of both sides by $D_\alpha = \partial / \partial_{x_\alpha}$, we have
\begin{align}\label{maasym1der1}
    g_{\varphi_\varepsilon}^{ij} D_\alpha (g_{\varphi_\varepsilon})_{ij} = g^{ij} D_\alpha g_{ij} + D_\alpha \log \upsilon(\varepsilon).
\end{align}
The first goal is to rewrite the equation \eqref{maasym1der1} to be an elliptic equation in terms of $D_\alpha {\varphi_\varepsilon}$. Let $e_1, \ldots, e_{2n+2}$ represent the real coordinate vector fields of $x_1, \ldots, x_{2n+2}$. Notice that $ (g_{\varphi_\varepsilon})_{ij} = g_{ij} + dd^c {\varphi_\varepsilon} (e_i, J e_j) $. We compute $D_\alpha$ of the second term:
\begin{align} \label{calcrealcx}
\begin{split}
    D_{\alpha} [dd^c   {\varphi_\varepsilon} (e_i, Je_j )] = & -d \circ J \circ d ( D_\alpha {\varphi_\varepsilon}) (e_i , Je_j) - d \circ (D_{\alpha} J) \circ d {\varphi_\varepsilon} (e_i, J e_j) \\ & - d \circ J \circ d {\varphi_\varepsilon} (e_i, (D_{\alpha} J) e_j).
\end{split}
\end{align}
Observe that $D_\alpha J$ is completely horizontal because $J$ preserves the product structure of the tangent bundle $T((\RR^{2n} \setminus B_R) \times \Sigma)$ and $J|_{T\Sigma}$ is constant. Thus, 
\begin{equation}\label{eq:cancelJ}
    D_\alpha J = (D_\alpha J)^\beta_\xi (e_\xi^* \otimes e_\beta),\quad (D_\alpha J)_\xi^{\boldsymbol{\mu}} = 0, \quad (D_\alpha J)^\beta_{\boldsymbol{\nu}} = 0, \quad (D_\alpha J)^{\boldsymbol{\mu}}_{\boldsymbol{\nu}} = 0,
\end{equation}
where the coefficients $(D_\alpha J)^\beta_\xi$ depend only on $x_1,\ldots,x_{2n}$ and not on $x_{2n+1},x_{2n+2}$.
In the same way, we can also see that
\begin{equation}
|\nabla_{0,X}^m (D_\alpha J)|_0 = O(r^{-\tau-1-m}) \;\,(\text{all}\;m \geq 0),\;\,\nabla_{0,\Sigma}^m (D_\alpha J) = 0\;\,(\text{all}\;m \geq 1).
\end{equation}
Moreover, it is obvious that
\begin{equation}
    \Delta_{g_{\varphi_\varepsilon}} (D_\alpha  {\varphi_\varepsilon}) = \tr_{g_{{\varphi_\varepsilon}}} (dd^c (D_\alpha {\varphi_\varepsilon}) (\cdot, J\cdot)) = g^{ij}_{{\varphi_\varepsilon}} dd^c (D_\alpha {\varphi_\varepsilon}) (e_i, J e_j).
\end{equation}
Then, \eqref{calcrealcx}--\eqref{eq:cancelJ} imply that
\begin{align}
\begin{split}
    g_{\varphi_\varepsilon}^{i j} D_\alpha [dd^c {\varphi_\varepsilon} (e_i, J e_j)] &= \Delta_{g_{\varphi_\varepsilon}} (D_\alpha {\varphi_\varepsilon}) +  \mathbf{O}(r^{-\tau-1}) \circledast g_{\varphi_\varepsilon}^{-1} \circledast \nabla_0 \nabla_{0,X } {\varphi_\varepsilon}\\
    &+  \mathbf{O}(r^{-\tau-2}) \circledast g_{\varphi_\varepsilon}^{-1} \circledast \nabla_{0,X}\varphi_\varepsilon,
\end{split}
\end{align}
where $\circledast$ denotes a contraction and $\mathbf{O}$ denotes the following behavior of a tensor $T$:
\begin{equation*}
T = \mathbf{O}(r^{-\rho}) :\Longleftrightarrow |\nabla_{0,X}^mT|_0 = O(r^{-\rho-m})\;\,(\text{all}\;m \geq 0), \;\, \nabla_{0,\Sigma}^m T = 0\;\,(\text{all}\;m \geq 1).
\end{equation*}
Then, abbreviating the estimates
\begin{equation*}\begin{split}
    |\nabla_{0,X}^m(D_\alpha g_{ij})|_0 = &\;O(r^{-\tau-1-m})\;\,(\text{all}\;m \geq 0),\\
|\nabla_{0,X}^m\nabla_{0,\Sigma}(D_\alpha g_{ij})|_0 = O(r^{-2\tau-1-m})\;\,&(\text{all}\;m \geq 0),\;\,\nabla_{0,\Sigma}^m(D_\alpha g_{ij}) = 0\;\,(\text{all}\;m \geq 2),
\end{split}\end{equation*}
by $D_\alpha g_{ij} = \widehat{\mathbf{O}}(r^{-\tau-1})$ and 
\begin{align*}
    |\nabla^m_{0,X} \nabla^k_{0, \Sigma} D_\alpha \log \upsilon (\varepsilon)|_0 = O(r^{-\varsigma -1-m}) \ (\text{all } m, k \geq 0),
\end{align*}
by $D_\alpha \log \upsilon (\varepsilon) = \widehat{\widehat{\mathbf{O}}} (r^{ -\varsigma -1}) $
the equation \eqref{maasym1der1} can be rewritten as
\begin{align} \label{maasym1der2}
\begin{split}
    \Delta_{g_{\varphi_\varepsilon}} (D_\alpha {\varphi_\varepsilon}) &= \mathbf{O}(r^{-\tau-1}) \circledast g_{\varphi_\varepsilon}^{-1} \circledast \nabla_0 \nabla_{0,X } {\varphi_\varepsilon}\\
    &+ \mathbf{O}(r^{-\tau-2}) \circledast g_{\varphi_\varepsilon}^{-1} \circledast \nabla_{0,X}\varphi_\varepsilon\\
    &+ D_\alpha g_{ij} \cdot (g_{\varphi_\varepsilon}^{ij} - g^{ij}), \quad D_\alpha g_{ij} = \widehat{\mathbf{O}}(r^{-\tau-1}),\\
    &+ \widehat{\widehat{\mathbf{O}}} (r^{-\varsigma-1})
\end{split}
\end{align}

We will later use this formula in full but for now it is enough to take absolute values. Using the fact that $ \Lambda^{-1} \varepsilon \delta_{i j} \leq (g_{{\varphi_\varepsilon}})_{i j} \leq  \Lambda  \delta_{i j}$, and according to the uniform estimates of $|\nabla_0 {\varphi_\varepsilon}|_0$ and $|\nabla_0^2 {\varphi_\varepsilon}|_0$ from Theorem \ref{muniformc11}, the formula \eqref{maasym1der2} implies that  
\begin{align}\label{eq:PDEforD1}
    |\Delta_{g_{\varphi_\varepsilon}} (D_\alpha {\varphi_\varepsilon})| \leq C\varepsilon^{-1} r^{-\tau-1}
\end{align}  
for some constant $C = C(\|{\varphi_\varepsilon}\|_{\C^2(X \times \Sigma,\Theta_\Psi)}, \Lambda, g, J)$ bounded above independently of $\varepsilon$.

\subsubsection*{Step 2: Barrier estimate of the first derivatives.}

The next target is to construct the upper barrier and lower barrier functions to control $|D_\alpha {\varphi_\varepsilon}|$. Consider a smooth cutoff function $\chi: \RR_{\geq 0}\rightarrow \RR$ satisfying $\chi(x) = 0$ for $x\leq 1$, $\chi(x) =1$ for $x \geq 2$ and $|\chi'(x)| \leq 4$, $|\chi'' (x)| \leq 4$ for $1 \leq x \leq 2$. The function $D_\alpha {\varphi_\varepsilon}$ can be extended smoothly to $X \times \Sigma$ by defining
\begin{align} \label{extenc1}
    h = 
    \chi_{R_0} \cdot D_\alpha {\varphi_\varepsilon},
\end{align}
where $R_0$ is a large positive constant to be determined later such that $\{r(p) \geq R_0 /2\}$ is contained in the asymptotic chart of $X$ and $\chi_{R_0} (p) = \chi (r(p )/ R_0)$. From \eqref{eq:PDEforD1},
\begin{align} \label{weilaestc11} \begin{split}
    |\Delta_{g_{\varphi_\varepsilon}} h| \leq \begin{cases}
        C\varepsilon^{-1} r^{-\tau-1}, &\text{for}\;\,r \geq 2R_0,\\
    4 \Lambda \varepsilon^{-1} \big( R_0^{-2} |\nabla_{0,X} {\varphi_\varepsilon}|_0 +  R_0^{-1} |\nabla_{0,X}^2 {\varphi_\varepsilon}|_0 \big)  + C\varepsilon^{-1} R_0^{-\tau-1},  &\text{for}\;\,R_0 \leq r \leq 2 R_0,\\
   0, &\text{for}\;\,r \leq R_0.
    \end{cases}
\end{split}
\end{align}
Then, we can pick a barrier function as follows: 
\begin{align} \label{baweifunct1}
   u_1 = E \Big\{ \big(1-\chi_{\frac{R_0}{2}}\big) \Big(\frac{R_0}{2}\Big)^{-\tau-1} t(t-1) + \chi_{\frac{R_0}{2}} t(t-1) r^{-\tau-1} \Big\} \leq 0,
\end{align}
where the constant $E$ is to be determined later. The barrier function $u_1$ is defined in $ X\times \Sigma$ with $u_1=0$ on $X \times \partial \Sigma$. We also have
\begin{align*}
    \Delta_{g_{\varphi_\varepsilon}} u_1 &=\frac{1}{2}\tr_{g_{\varphi_\varepsilon}}(dd^c u_1(\cdot,J\cdot)) = \frac{1}{2} \left\{ \sum_{1 \leq \alpha,\beta \leq 2n} g^{\alpha\beta}_{{\varphi_\varepsilon}} (u_{1,\alpha\beta} + u_{1,J\alpha, J\beta} )\right.\\
    &\left.+ \sum_{\substack{1\leq \alpha \leq 2n,\\ 2n+1 \leq \boldsymbol{\mu} \leq 2n+2}}  g^{\boldsymbol{\mu} \alpha}_{{\varphi_\varepsilon}}(  u_{1,\alpha \boldsymbol{\mu}} + u_{1,J \alpha, J {\boldsymbol{\mu}}} )
   + \sum_{2n+1 \leq \boldsymbol{\mu},\boldsymbol{\nu} \leq 2n+2 } g_{\varphi_\varepsilon}^{\boldsymbol{\mu} \boldsymbol{\nu}} (u_{1,\boldsymbol{\mu}\boldsymbol{\nu}} + u_{1,J \boldsymbol{\nu},J\boldsymbol{\nu}})\right\}.
\end{align*}
Using the estimate $\Lambda^{-1} \varepsilon \delta_{i j} \leq (g_{{\varphi_\varepsilon}})_{i j} \leq  \Lambda  \delta_{i j}$, we obtain that
\begin{align} \label{laofba1}
    \Delta_{g_{\varphi_\varepsilon}} u_1 \geq \begin{cases}
        E ( \Lambda^{-1} r^{-\tau-1} - \Lambda \varepsilon^{-1} r^{-\tau-2} - \Lambda \varepsilon^{-1} r^{-\tau-3}) &\text{for}\;\,r \geq R_0/2,\\
        E  \Lambda^{-1} R_0^{-\tau-1} &\text{for}\;\,r \leq R_0/2.
    \end{cases}
\end{align}
By taking 
\begin{align}
   R_0 \geq  4 \Lambda^2 \varepsilon^{-1}, \quad E = 8 R_0^{\tau}C \;\,\text{with}\;\, C= C( \|{\varphi_\varepsilon}\|_{\C^2(X \times \Sigma,\Theta_\Psi)}, \Lambda, g, J),
\end{align}
and comparing with the inequality \eqref{weilaestc11}, we have $\Delta_{g_{\varphi_\varepsilon}} u_1 \geq \Delta_{g_{\varphi_\varepsilon}} h$. Together with the fact that $ u_1= h =0 $ on $X \times \partial\Sigma$, Lemma \ref{openmax1} implies that $h \geq u_1$ in $X \times \Sigma$. The same method shows the upper bound $h \leq -u_1 $, which, together with the lower bound, implies that for each spatial index $1\leq \alpha \leq 2n$,  
\begin{align} \label{maasym1der3}
    |D_\alpha {\varphi_\varepsilon}| \leq C\big( \|{\varphi_\varepsilon}\|_{C^2(X \times \Sigma,\Theta_\Psi)}, \Lambda, g, J, \varepsilon^{-1}\big)t(1-t) r^{-\tau-1}\;\,\text{on}\;\,\{r \geq 2R_0\} \times \Sigma.
\end{align}

\subsubsection*{Step 3: Barrier estimate of the second derivatives.}

Now, it comes to deal with the asymptotic behavior of the second derivative.

For a preliminary estimate, we go back to the full formula \eqref{maasym1der2} for $\Delta_{g_{\varphi_\varepsilon}}(D_\alpha \varphi_\varepsilon)$. For every $\mathbf{a} \in (0,1)$, the Euclidean $\mathcal{C}^{0,\mathbf{a}}$ norm of the right-hand side on a restricted unit ball $\hat{B}_1(p) = B_1(p) \cap ((\RR^{2n} \setminus B_{R}) \times \Sigma)$ with $r(p) = r \geq 2R$ is still bounded by $C( \|{\varphi_\varepsilon}\|_{C^2(X \times \Sigma,\Theta_\Psi)}, \Lambda, g, J, \varepsilon^{-1}) r^{-\tau-1}$ thanks to the Evans-Krylov estimates applied to $\varphi_\varepsilon$ in the interior and the estimates of \cite[Sections 2.1--2.2]{caffarelli1985dirichlet} at the boundary. (The precise dependence of this constant on the ellipticity, and hence on $\varepsilon^{-1}$, is not clear but also not needed.) Likewise, the $\C^{0,\mathbf{a}}$ norm of the coefficient tensor of the PDE, $g_{\varphi_\varepsilon}^{-1}$, is bounded by $C( \|{\varphi_\varepsilon}\|_{C^2(X \times \Sigma,\Theta_\Psi)}, \Lambda, g, J, \varepsilon^{-1})$. Applying the classic interior and boundary Schauder estimates to \eqref{maasym1der2}, we thus obtain from \eqref{maasym1der3} that
\begin{align} \label{maasymhigh1}
\|D_\alpha {\varphi_\varepsilon}\|_{\C^{2,\mathbf{a}}(\hat{B}_1(p))} \leq C\big(\|\varphi_\varepsilon\|_{C^2(X \times \Sigma, \Theta_\Psi)}, \Lambda, g, J, \varepsilon^{-1}\big) r^{-\tau-1}.
\end{align}

These estimates will now be used to start a bootstrap to obtain some decay for $D_\beta D_\alpha \varphi_\varepsilon$ using the same barrier method as in Step 2. Differentiate the equation \eqref{maasym1der2} again by $D_{\beta} = \partial /\partial x_\beta$ for $1\leq \beta \leq 2n$. This yields
\begin{align} \label{maasym2der1}
\begin{split}
   &\Delta_{g_{\varphi_\varepsilon}} (D_\beta D_\alpha {\varphi_\varepsilon}) = g_{\varphi_\varepsilon}^{-1} \circledast D_\beta g_{\varphi_\varepsilon} \circledast g_{\varphi_\varepsilon}^{-1} \circledast \nabla_0^2 \nabla_{0,X} \varphi_\varepsilon\\
   &+\mathbf{O}(r^{-\tau-2}) \circledast g_{\varphi_\varepsilon}^{-1} \circledast \nabla_0 \nabla_{0,X}\varphi_\varepsilon \\
   &+ \mathbf{O}(r^{-\tau-1}) \circledast g_{\varphi_\varepsilon}^{-1} \circledast D_\beta g_{\varphi_\varepsilon} \circledast g_{\varphi_\varepsilon}^{-1} \circledast \nabla_0 \nabla_{0,X}\varphi_\varepsilon \\
   &+ \mathbf{O}(r^{-\tau-1}) \circledast g_{\varphi_\varepsilon}^{-1} \circledast \nabla_0 \nabla_{0,X}^2\varphi_\varepsilon\\
   &+ \mathbf{O}(r^{-\tau-3}) \circledast g_{\varphi_\varepsilon}^{-1} \circledast \nabla_{0,X}\varphi_\varepsilon\\
   &+ \mathbf{O}(r^{-\tau-2}) \circledast g_{\varphi_\varepsilon}^{-1} \circledast D_\beta g_{\varphi_\varepsilon} \circledast g_{\varphi_\varepsilon}^{-1} \circledast \nabla_{0,X}\varphi_\varepsilon\\
   &+ \mathbf{O}(r^{-\tau-2}) \circledast g_{\varphi_\varepsilon}^{-1} \circledast \nabla_{0,X}^2\varphi_\varepsilon\\
   &+D_\beta D_\alpha g_{ij} \cdot (g_{\varphi_\varepsilon}^{ij} - g^{ij}), \quad D_\beta D_\alpha g_{ij} = \widehat{\mathbf{O}}(r^{-\tau-2}),\\
   &+\widehat{\mathbf{O}}(r^{-\tau-1}) \circledast  (g_{\varphi_\varepsilon}^{-1}  \circledast D_\beta g_{\varphi_{\varepsilon}} \circledast  g_{\varphi_\varepsilon}^{-1} - g^{-1} \circledast D_\beta g \circledast g^{-1})\\
   & + \widehat{\widehat{\mathbf{O}}} (r^{-\varsigma- 2})
   \end{split}
\end{align}
As before, we have that $\Lambda^{-1}g^{-1} \leq g_{\varphi_\varepsilon}^{-1} \leq \varepsilon^{-1} \Lambda g^{-1}$, and we also have
\begin{equation}
    |D_\beta g_{\varphi_\varepsilon}|_0 \leq |D_\beta g|_0 + |\nabla_{0,X} \nabla_0^2 \varphi_\varepsilon|_0 = O(r^{-\tau-1})
\end{equation}
thanks to the preliminary estimate \eqref{maasymhigh1}. Similarly, all derivatives of $\varphi_\varepsilon$ on the right-hand side of \eqref{maasym2der1} are at worst of order $3$, with at least one purely spatial derivative, and hence can be bounded by $O(r^{-\tau-1})$ thanks to \eqref{maasymhigh1}. In this way, we obtain that
\begin{align}\label{eq:goo}
\Delta_{g_{\varphi_\varepsilon}} (D_\beta D_\alpha {\varphi_\varepsilon}) = \widehat{\mathbf{O}}(r^{-\tau-2}) \circledast (g_{\varphi_\varepsilon}^{-1} - g^{-1}) + O(r^{-2\tau-2}).
\end{align}
The majority of terms on the right-hand side actually decay faster than $O(r^{-2\tau-2})$, and the only term that might decay more slowly is $\widehat{\mathbf{O}}(r^{-\tau-2}) \circledast (g_{\varphi_\varepsilon}^{-1} - g^{-1})$. So far, we can only bound this by $O(r^{-\tau-2})$. However, by applying the same method as in the weighted estimate of the first derivative in Step 2, we can then construct the following barrier function for $D_\beta D_\alpha {\varphi_\varepsilon}$:
\begin{align} \label{baweifunct2}
    u_2 = E' \Big\{ \big( 1- \chi_{\frac{R_0}{2}}  \big) \Big(\frac{R_0}{2} \Big)^{-\tau -2} t(t-1) + \chi_{\frac{R_0}{2}} t(t-1)r^{-\tau-2} \Big\},
\end{align}
where  $R_0$ is the same constant as in \eqref{baweifunct1} and $E'$ is another uniform constant depending on $R_0$, $\|{\varphi_\varepsilon}\|_{C^2(X \times \Sigma,\Theta_\Psi)}$, $\Lambda$, $g$, $J $ and on the constant of \eqref{maasymhigh1}. Hence, we get the weighted estimate for $D_\beta D_\alpha {\varphi_\varepsilon}$:  
\begin{align} \label{maasym2der3}
    |D_\beta D_\alpha {\varphi_\varepsilon}| \leq C\big(\|\varphi_\varepsilon\|_{C^2(X \times \Sigma, \Theta_\Psi)}, \Lambda, g, J, \varepsilon^{-1}\big) r^{-\tau-2}.
\end{align}

According to the full formula \eqref{maasym2der1} for $\Delta_{g_{\varphi_{\varepsilon}}} (D_\beta D_\alpha \varphi_\varepsilon)$ and \eqref{maasymhigh1}, in the restricted unit ball $\hat{B}_{1}(p)$, the $\C^{0,\mathbf{a}}$ norm of all terms on the right hand side of \eqref{maasym2der1} are bounded by $C( \|{\varphi_\varepsilon}\|_{C^2}, \Lambda, g, J, \varepsilon^{-1}) 
 r^{-\tau-2}$. Applying the classic interior and boundary Schauder estimates to \eqref{maasym2der1}, we thus obtain from \eqref{maasym1der3} that 
\begin{align}\label{maasymhigh2}
    \|D_\alpha D_\beta {\varphi_\varepsilon}\|_{\C^{2,\mathbf{a}}(\hat{B}_1(p))} \leq C\big(\|\varphi_\varepsilon\|_{C^2(X \times \Sigma, \Theta_\Psi)}, \Lambda, g, J, \varepsilon^{-1}\big) r^{-\tau-2}.
\end{align}

\subsubsection*{Step 4: Iterative improvement of the barrier estimates.}

In this step, we improve the decay order of the estimates we obtain in \eqref{maasymhigh1} and \eqref{maasymhigh2} by an iteration argument. Recall that from Steps 2--3 we have the following weighted estimates to start the iteration process (see \eqref{maasymhigh1} and \eqref{maasymhigh2}):
\begin{align}
\begin{split} \label{weighted2esti}
\|\nabla_{0,X}\varphi_\varepsilon\|_{\C^{0,\mathbf{a}} (\hat{B}_1 (p))} + \|\nabla_0\nabla_{0,X}\varphi_\varepsilon\|_{\C^{0,\mathbf{a}} (\hat{B}_1 (p))}  & \\ + \; \|\nabla_0^2\nabla_{0,X}\varphi_\varepsilon\|_{\C^{0,\mathbf{a}} (\hat{B}_1 (p))} &= O(r^{-\tau-1}),\\
\|\nabla^2_{0,X} \varphi_{\varepsilon}\|_{\C^{0,\mathbf{a}} (\hat{B}_1 (p))}+\|\nabla_0 \nabla_{0,X}^2\varphi_\varepsilon\|_{\C^{0,\mathbf{a}} (\hat{B}_1 (p))} &= O(r^{-\tau-2}).
\end{split}
\end{align}
To complete the iteration argument, we need to improve the decay of the term $g_{\varphi_\varepsilon}^{-1}- g^{-1}$. More precisely, this term occurs in a combination $(g_{\varphi_\varepsilon}^{ij} - g^{ij}) D_\alpha g_{ij} $ in the first derivative estimate (Step 2), and in combinations $(g_{\varphi_\varepsilon}^{ij} - g^{ij}) D_\beta D_\alpha g_{ij}$ and $ [g_{\varphi_{\varepsilon}}^{ik} D_\beta (g_{\varphi_{\varepsilon}})_{kl} g_{\varphi_{\varepsilon}}^{jl} - g^{ik} D_\beta g_{kl} g^{jl}  ] D_\alpha g_{ij} $ (to get optimal decay rate of $D_\alpha D_\beta 
 \varphi_\varepsilon$, we need to analyze this term) in the second derivative estimate (Step 3). We will now analyze these combinations more carefully. All constants in this step may depend on $\|\varphi_\varepsilon\|_{C^2(X \times \Sigma, \Theta_\Psi)}, \Lambda, g, J, \varepsilon^{-1}$. Let $\varphi$ be a continuous function defined in $(X\backslash B_{R}) \times \Sigma $ with at most polynomial growth rate at infinity,  for simplicity, we introduce the notation $(\varphi)^{\sharp}$ to denote the decay rate of $\varphi$ and $ (D_X \varphi)^\sharp$, $(D_X^2 \varphi )^\sharp$ to denote the decay rate of $\|\nabla_{0,X}\varphi\|_{\C^{0,\mathbf{a}} (\hat{B}_1 (p))} + \|\nabla_0\nabla_{0,X}\varphi\|_{\C^{0,\mathbf{a}} (\hat{B}_1 (p))} + \|\nabla_0^2\nabla_{0,X}\varphi\|_{\C^{0,\mathbf{a}} (\hat{B}_1 (p))} $, $\|\nabla^2_{0,X} \varphi\|_{\C^{0,\mathbf{a}} (\hat{B}_1 (p))}+\|\nabla_0 \nabla_{0,X}^2\varphi\|_{\C^{0,\mathbf{a}} (\hat{B}_1 (p))}$ respectively.

The metric tensor $(g_{\varphi_\varepsilon})_{i j}$ and its inverse can be written as $(2n+2) \times (2n+2)$-matrices
\begin{align*}
    \widetilde{P} = 
    \begin{pmatrix}
    P & \eta^{t} \\
    \eta & \mathfrak{p} 
    \end{pmatrix}, \quad (\widetilde{P})^{-1} = 
    \begin{pmatrix}
    Q & \xi^{t} \\
    \xi & \mathfrak{q}
    \end{pmatrix},
\end{align*}
where $P$, $Q$ are $2n \times 2n$-matrices, $\mathfrak{p}$, $\mathfrak{q}$ are $2 \times 2$-matrices and $\eta$, $\xi$ are $2 \times 2n$-matrices. By direct calculation, we have
\begin{align} \label{lincalc1}
    Q = P^{-1} - P^{-1} \eta^{t} \xi, \quad \xi=- \mathfrak{p}^{-1} \eta Q, \quad  \mathfrak{q} = (I_2 - \xi \eta^{t})\mathfrak{p}^{-1}.
\end{align}
The fact that $ \Lambda^{-1} \varepsilon I_{2n+2} \leq \widetilde{ P} \leq \Lambda I_{2n+2}$ implies that $|\xi| \leq C |\eta|$. The weighted estimate \eqref{weighted2esti}, together with the fall-off condition of the metric $g$, implies that $|\eta| = O(r^{(D_X \varphi_{\varepsilon})^{\sharp}})$. Then, from \eqref{lincalc1}, we have
\begin{align} \label{lincalc2}
    Q= P^{-1} + O(|\eta|^2), \quad \xi = O(|\eta|), \quad \mathfrak{q} = \mathfrak{p}^{-1} + O(|\eta|^2).
\end{align}
Similarly, let $\widetilde{P}'$ denote the matrix of $g$ in asymptotic coordinates. If we write
\begin{align*}
     \widetilde{P}' = 
    \begin{pmatrix}
    P' & (\eta')^{t} \\
    \eta' & \mathfrak{p}' 
    \end{pmatrix}, \quad (\widetilde{P}')^{-1} = 
    \begin{pmatrix}
    Q' & (\xi')^{t} \\
    \xi' & \mathfrak{q}'
    \end{pmatrix},
\end{align*}
then we have
\begin{align} \label{lincalc3}
     Q'= (P')^{-1} + O(|\eta'|^2), \quad \xi' = O(|\eta'|), \quad \mathfrak{q}' = (\mathfrak{p}')^{-1} + O(|\eta'|^2),
\end{align}
where $|\eta'| = O(r^{-\gamma-1})$. According to the estimate \eqref{weighted2esti}, $|P - P'| = O(r^{(D_X^2 \varphi_{\varepsilon})^\sharp})$ and hence $| P^{-1} - (P')^{-1}| = O(r^{(D_X^2 \varphi_{\varepsilon})^\sharp})$ as well because $P,P'$ are uniformly bounded. Moreover, $\mathfrak{p}, \mathfrak{q}, \mathfrak{p}', \mathfrak{q}'$ are all uniformly equivalent to $I_2$ but there is no reason for $\mathfrak{p} - \mathfrak{p}'$ to decay. Then \eqref{lincalc2} and \eqref{lincalc3} imply that
\begin{align} \label{decayest:Qxip}
\begin{split}
  |Q- Q'| &= O(r^{\max\{ (D_X^2 \varphi_{\varepsilon})^\sharp, 2 (D_X \varphi_{\varepsilon})^\sharp, -2\gamma-2 \}} ), \\ 
  |\xi-\xi'| &= O(r^{ \max\{ (D_X \varphi_{\varepsilon})^\sharp, -\gamma -1 \} }),\\
  |\mathfrak{p}- \mathfrak{p}' | &= O(1).
\end{split}
\end{align}
Then, by calculating blockwise and using that $D_{\alpha}g_{\boldsymbol{\mu}\boldsymbol{\nu}} = 0$, we have
\begin{align}\label{eq:foo}
\begin{split}
    ( g_{\varphi_\varepsilon}^{i j} - g^{i j} ) D_\alpha g_{i j}&= |Q- Q'| O( r^{-\tau-1} ) + |\xi -\xi'|  O(r^{-\gamma-2}),\\
     ( g_{\varphi_\varepsilon}^{i j} - g^{i j} ) D_\beta D_\alpha g_{i j}&= |Q- Q'| O( r^{-\tau-2} ) + |\xi -\xi'|  O(r^{-\gamma-3}).
\end{split}
\end{align}
By inserting \eqref{decayest:Qxip}, \eqref{eq:foo} into \eqref{maasym1der2}, we have 
\begin{align} \label{C0iteestimate}
    \begin{split}
        |D_\alpha \varphi_{\varepsilon}| &= O(r^{\max \{(D_X^2 \varphi_{\varepsilon})^\sharp-\tau-1, (D_X \varphi_{\varepsilon})^\sharp -\tau-1, -2\gamma-3, -\varsigma-1\}}).
    \end{split}
\end{align}
For the last but one term of \eqref{maasym2der1}, 
\begin{align} \label{lasttermoflaof2ndder}
\begin{split}
    D_\alpha g_{ij} \big(g_{\varphi_{\varepsilon}}^{ik} g^{jl}_{\varphi_{\varepsilon}} D_\beta (g_{\varphi_{\varepsilon}})_{kl}  - g^{ik} g^{jl} D_\beta g_{kl}  \big) &=  D_\alpha g_{ij} g^{ik}_{\varphi_{\varepsilon}} g^{jl}_{\varphi_{\varepsilon}} (D_\beta (g_{\varphi_{\varepsilon}})_{kl} -D_\beta g_{kl} )  \\
    & + D_{\alpha} g_{ij} D_\beta g_{kl} g_{\varphi_{\varepsilon}}^{jl} (g^{ik}_{\varphi_{\varepsilon}}- g^{ik}) \\
    & + D_{\alpha} g_{ij} D_\beta g_{kl} g^{ik} (g^{jl}_{\varphi_{\varepsilon}}- g^{jl}).
\end{split}
\end{align}
For the first term of right-hand side of \eqref{lasttermoflaof2ndder}, by using $|D_\beta (g_{\varphi_{\varepsilon}})_{kl} - D_\beta g_{kl}| \leq |\nabla_0^2 \nabla_{0,X} \varphi_\varepsilon|$, we obtain that the decay rate of the first term is given by $(D_X \varphi_{\varepsilon})^\sharp -\tau -1$. For the second and third terms, we need to analyze $(g^{ik}_{\varphi_{\varepsilon}} - g^{ik})$. Similar to (\ref{eq:foo}), we have
\begin{align} \label{eq:fooo}
    D_{\alpha} g_{ij} D_\beta g_{kl} g^{ik} (g^{jl}_{\varphi_{\varepsilon}}- g^{jl}) = |Q-Q'| O(r^{-2\tau -2}) + |\xi-\xi'| O(r^{-\tau-\gamma-3}) + O(r^{-2\gamma-4}).
\end{align}
By inserting \eqref{decayest:Qxip} into \eqref{eq:fooo}, we have 
\begin{align}\label{decaylastterm}
\begin{split}
    D_\alpha g_{ij} \big(  g_{\varphi_{\varepsilon}}^{ik} g^{jl}_{\varphi_{\varepsilon}} & D_\beta (g_{\varphi_{\varepsilon}})_{kl}  - g^{ik} g^{jl} D_\beta g_{kl}  \big) = O(r^{\max \{(D_X \varphi_{\varepsilon})^\sharp -\tau-1, (D_X^2 \varphi_{\varepsilon})^\sharp -2\tau -2, -2\gamma-4\}}).
    \end{split}
\end{align}
Then, inserting \eqref{decayest:Qxip}, \eqref{eq:foo} into \eqref{maasym2der1}, we have
\begin{align*}
    |D_\alpha D_\beta \varphi_{\varepsilon}| &= O (r^{\max \{ (D_X^2 \varphi_{\varepsilon})^\sharp-\tau -1, (D_X \varphi_{\varepsilon})^\sharp -\tau-1, -2\gamma-4, -\varsigma-2 \}}). 
\end{align*}

We can go one step further by applying Schauder estimates to \eqref{maasym1der2} and \eqref{maasym2der1} and to obtain $\C^{2,\mathbf{a}}$ estimates for $D_\alpha \varphi_\varepsilon$ and $D_\alpha D_\beta \varphi_\varepsilon$ in $\hat{B}_1 (p)$. Indeed, those terms on the right-hand side of the PDEs \eqref{maasym1der2}, \eqref{maasym2der1} that were known to decay pointwise with rate $\max\{(D_X^2 \varphi_{\varepsilon})^\sharp, (D_X \varphi_{\varepsilon})^\sharp\} -\tau-1$ already after Step 3 are actually also decaying at rate $\max\{(D_X^2 \varphi_{\varepsilon})^\sharp, (D_X \varphi_{\varepsilon})^\sharp\} -\tau-1$ in $\C^{0,\mathbf{a}} (\hat{B}_1 (p))$ norm. 
This is clear from \eqref{weighted2esti}. So we just need to find the decay rates of the most difficult terms, $(g_{\varphi_\varepsilon}^{ij} - g^{ij}) D_\alpha g_{ij} $ in \eqref{maasym1der2} and $(g_{\varphi_\varepsilon}^{ij} - g^{ij}) D_\beta D_\alpha g_{ij}$, $D_\alpha g_{ij} (g_{\varphi_{\varepsilon}}^{ik} g^{jl}_{\varphi_{\varepsilon}} D_\beta (g_{\varphi_{\varepsilon}})_{kl}  - g^{ik} g^{jl} D_\beta g_{kl} )$ in \eqref{maasym2der1} in $\mathcal{C}^{0,\mathbf{a}}(\hat{B}_1(p))$ norm as well. For this, we need to go back and also estimate the $\mathcal{C}^{0,\mathbf{a}}$-norm of $Q-Q'$ and $\xi-\xi'$ in $\hat{B}_1(p)$, as follows. By using \eqref{weighted2esti}, we have that
\begin{align*}
[P^{-1}- (P')^{-1}]_{\mathcal{C}^{0,\mathbf{a}}(\hat{B}_1(p))} = O(r^{(D_X^2 \varphi_\varepsilon)^\sharp} ),\quad [\xi]_{\mathcal{C}^{0,\mathbf{a}}(\hat{B}_1(p))} = O(r^{(D_X \varphi_\varepsilon)^\sharp}).
\end{align*}
Then, based on \eqref{lincalc2}, we have that
\begin{align} \label{weightedestmatrixalpha}
\begin{split}
    [Q-Q']_{\mathcal{C}^{0,\mathbf{a}}(\hat{B}_1(p))} &= O(r^{(D_X^2 \varphi_{\varepsilon})^\sharp, 2 (D_X \varphi_{\varepsilon})^\sharp, -2\gamma-2}), \\ 
    [\xi- \xi']_{\mathcal{C}^{0,\mathbf{a}}(\hat{B}_1(p))} &= O(r^{\max\{ (D_X \varphi_{\varepsilon})^\sharp, -\gamma-1 \}}).
\end{split}
\end{align}
Then we can proceed as in \eqref{eq:foo} and \eqref{decaylastterm}, obtaining that the decay rates of $[\Delta_{\varphi_{\varepsilon}} D_\alpha \varphi_{\varepsilon}]_{\C^{0,\mathbf{a}}(\hat{B}_1 (p))} $ and $[\Delta_{\varphi_{\varepsilon}} D_\alpha D_\beta \varphi_\varepsilon]_{\C^{0,\mathbf{a}}(\hat{B}(p))}$ are $\max \{(D_X^2 \varphi_{\varepsilon})^\sharp-\tau-1, (D_X \varphi_{\varepsilon})^\sharp -\tau-1, -2\gamma-3, -\varsigma-2 \}$ and $\max \{(D_X^2 \varphi_{\varepsilon})^\sharp-\tau-1, (D_X \varphi_{\varepsilon})^\sharp -\tau-1, -2\gamma-4, -\varsigma-2\}$ respectively. According to the classic interior and boundary Schauder estimates, we improve \eqref{C0iteestimate} to $\C^{2,\mathbf{a}} (\hat{B}_1 (p))$ norm,
\begin{align} \label{C2aiteestimate}
    \begin{split}
        \|D_\alpha \varphi_{\varepsilon}\|_{\C^{2,\mathbf{a}} (\hat{B}_1 (p))} &= O(r^{\max \{(D_X^2 \varphi_{\varepsilon})^\sharp-\tau-1, (D_X \varphi_{\varepsilon})^\sharp -\tau-1, -2\gamma-3, -\varsigma-1 \}}),\\
        \|D_\alpha D_\beta \varphi_{\varepsilon}\|_{\C^{2,\mathbf{a}} (\hat{B}_1 (p))} &= O (r^{\max \{ (D_X^2 \varphi_{\varepsilon})^\sharp-\tau -1, (D_X \varphi_{\varepsilon})^\sharp -\tau-1, -2\gamma-4, -\varsigma- 2\}}).
    \end{split}
\end{align}
Inserting \eqref{weighted2esti} into \eqref{C2aiteestimate}, and using \eqref{C2aiteestimate} again to improve \eqref{weighted2esti}, we finally obtain the following estimates:
\begin{align} \label{C2afinalestimate}
        \|D_\alpha \varphi_{\varepsilon}\|_{\C^{2,\mathbf{a}} (\hat{B}_1 (p))} = O(r^{\max\{-2\gamma-3, -\varsigma-1\}}), \quad
        \|D_\alpha D_\beta \varphi_{\varepsilon}\|_{\C^{2,\mathbf{a}} (\hat{B}_1 (p))} = O (r^{ \max\{ -2\gamma-4, -\varsigma-2 \}}).
\end{align}
Note that according to (\ref{C2afinalestimate}), because $\Psi$ was chosen to be linear in $t$, the decay rate of $\varphi_{\varepsilon}$ is faster than the decay rate of the boundary data $\psi_0,\psi_1$. 

\subsubsection*{Step 5: Proof of Theorem \ref{epsilongeodesicasymptotics}}

In Step 4, we have obtained the optimal decay rates in the cases of $k=1,2$ (even though it is not required in the proof of Theorem \ref{epsilongeodesicasymptotics}). In this step, we give optimal estimates for $k\geq 3$ and complete the proof of Theorem \ref{epsilongeodesicasymptotics}.

{For the higher order derivatives, by differentiating the Monge-Amp\`er{e} equation \eqref{maasym1der1} $m$ times, similar to \eqref{maasym1der2} and \eqref{maasym2der1} and writing $D_K = D_{\kappa_1} \cdots D_{\kappa_m}$ ($1 \leq \kappa_i \leq 2n$, for $i =1,\ldots, m$), instead of giving a full formula as \eqref{maasym1der2} and \eqref{maasym2der1}, we write a simplified formula of $\Delta_{\varphi_{\varepsilon}} D_K \varphi_{\varepsilon}$:
\begin{align} \label{higherestfeps1}
\begin{split}
    |\Delta_{\varphi_{\varepsilon}} D_K \varphi_{\varepsilon}|   &  \leq\sum_{i=1}^{m} O(r^{-\tau -2 -m+i} ) |\nabla_{0,X}^i \varphi_{\varepsilon}| + \sum_{i=1}^{m} O(r^{-\tau -1 -m +i}) |\nabla_{0} \nabla_{0,X}^{i} \varphi_{\varepsilon} | \\ & + \sum_{i=1}^{m-1} O(r^{-\tau -m +i}) |\nabla_{0}^2 \nabla_{0,X}^i \varphi_{\varepsilon}|  + \sum_{i=1}^m |\nabla_{0,X}^i g_{jl}| |\nabla_{0,X}^{m-i} (g^{jl}_{\varphi_{\varepsilon}} - g^{jl})|\\ &+ O(r^{-\varsigma-m}). 
\end{split}
\end{align}
  Applying induction on $m$, according to iteration process (step 4), we can assume for $k \leq m-1$
  \begin{align} \label{inducassonm}
  ||\nabla^{k}_{0,X} \varphi_{\varepsilon}||_{\hat{B}_1(p)}= O(r^{\max\{-2\gamma-2, -\varsigma \}-k}).
  \end{align}
To find the optimal decay rates, the most difficult term is $\sum_{i=1}^m |\nabla_{0, X}^i g_{jl}| |\nabla_{0, X}^{m-i} (g^{jl}_{\varphi_{\varepsilon}} - g^{jl})|$. Notice that
by \eqref{eq:fooo} and \eqref{C2aiteestimate}, we have
\begin{align*}
    \big|D_{K_1} {g_{jl}} D_{K_2} g_{ik} (g^{ij}_{\varphi_{\varepsilon}}- g^{ij})\big| = O (r^{-2\gamma-2- k_1-k_2}),
\end{align*}
where $K_1$, $K_2$ are $k_1$-, $k_2$-multi-indices respectively. Then, we apply induction on $k$ to find the decay rate of $|D_{K_1} {g_{jl}} D_{K_2} g_{ik} D_K (g^{ij}_{\varphi_{\varepsilon}}- g^{ij})|$, where $K$ is a $k$-multi-index. Applying one derivative to $(g^{ij}_{\varphi_{\varepsilon}}-g^{ij})$, by \eqref{lasttermoflaof2ndder}, we can prove that 
\begin{align} \label{induconk}
    |D_{K_1} {g_{jl}} D_{K_2} g_{ik} D_K (g^{ij}_{\varphi_{\varepsilon}}- g^{ij})| = O(r^{-2\gamma-2-k_1-k_2-k}).
\end{align}
Then, by \eqref{lasttermoflaof2ndder} and \eqref{induconk}, we have
\begin{align*}
    |\nabla_{0,X}^i g_{jl}| |\nabla_{0,X}^{m-i} (g^{jl}_{\varphi_{\varepsilon}} - g^{jl})|  & \leq |\nabla^i_{0,X} g_{jl}| \Big\{ \big|\nabla_{0,X}^{m-i-1} \big[g^{jk}_{\varphi_{\varepsilon}} g^{sl}_{\varphi_{\varepsilon}} (\nabla_{0,X} (g_{\varphi_{\varepsilon}})_{ks} -\nabla_{0,X} g_{ks})  \big] \big|  \\
    & + 2 \big| \nabla^{m-i-1} \big[ g_{\varphi_{\varepsilon}}^{jk} (g^{ls}_{\varphi_{\varepsilon}}- g^{ik})\nabla_{0,X} g_{ks} \big] \big| \Big\}  \\
    & = O(r^{-2\gamma-2-m}).
\end{align*}
Combining with \eqref{inducassonm}, we have that the right-hand side of \eqref{higherestfeps1} is $O(r^{-2\tau+2 -k})$. Using the construction of barrier functions in Step 2--3, we obtain that $|D_K \varphi_{\varepsilon}| \leq C r^{-2\tau+2 -m}$. To apply Schauder estimates to the $m$-th derivative of Monge-Amp\`{e}re equation, we also need to know the decay rate of $[\Delta_{\varphi_{\varepsilon}} D_K \varphi_{\varepsilon}]_{\C^{0,\mathbf{a}} (\hat{B}(p))} $:
\begin{align*}
    [\Delta_{\varphi_{\varepsilon}} D_K \varphi_{\varepsilon}]_{\C^{0,\mathbf{a}} (\hat{B}(p))} & \leq \sum_{i=1}^{m} O(r^{-\tau -2 -m+i} ) \|\nabla_{0,X}^i \varphi_{\varepsilon}\|_{\C^{0,\mathbf{a}} (\hat{B}(p))} \\ & + \sum_{i=1}^m O(r^{-\tau -1 -m +i}) \|\nabla_{0} \nabla_{0,X}^{i} \varphi_{\varepsilon} \|_{\C^{0,\mathbf{a}} (\hat{B}(p))}  \\ & + \sum_{i=1}^{m-1} O(r^{-\tau -m +i}) \|\nabla_{0}^2 \nabla_{0,X}^i \varphi_{\varepsilon}\|_{\C^{0,\mathbf{a}} (\hat{B}(p))} \\&  + \sum_{i=1}^m\big\||\nabla_{0,X}^i g_{jl}| |\nabla_{0,X}^{m-i} (g^{jl}_{\varphi_{\varepsilon}} - g^{jl})|\big\|_{\C^{0,\mathbf{a}} (\hat{B}(p))}+ O (r^{-\varsigma-m})\\&  = O(r^{{\max\{-2\gamma-2, -\varsigma \}-m}}) 
\end{align*}
Hence, we have $\|D_K \varphi_{\varepsilon}\|_{\C^{2,\mathbf{a}}(\hat{B}_1(p))} \leq C r^{{{\max\{-2\gamma-2, -\varsigma \}-m}}}$, for $m \geq 1$. 
To prove that $\varphi_\varepsilon$ is in $\HH_{-\gamma}$, by integrating $({\varphi_\varepsilon})_r = O(r^{\max\{-2\gamma-2, -\varsigma \}-1})$ in the radial direction from infinity to $r=R$, we obtain a function $\hat{\varphi}_\varepsilon$ defined in $X\setminus B_R$ with decay rate $\max\{-2\gamma-2, -\varsigma \}$. Then,
\begin{align}\label{c0asymbehave}
  \varphi_\varepsilon - \hat{\varphi}_\varepsilon = c(\theta, t),   
\end{align}
where $c(\theta, t)$ is a function in $X \setminus B_R$ independent of radius $r$ and $\theta$ be viewed as a variable on the link. It suffices to prove that $c(\theta, t)$ is independent of $\theta$. By taking derivative of \eqref{c0asymbehave}, we have $|\nabla_{0, X} c(\theta, t)| = O(r^{\max\{-2\gamma-2, -\varsigma \}-1})$. In the case that $c(\theta, t)$ is not constant with respect to $\theta$, $|\nabla_{0,X} c(\theta, t)| \sim r^{-1}$, which contradicts to the fact that $\max\{-2\gamma-2, -\varsigma \} <-1$. Hence we proved that $\varphi_\varepsilon = c(t) + O(r^{-\max\{-2\gamma-2, -\varsigma \}})$. We conclude that, for $\Phi_\varepsilon = \varphi_\varepsilon + \Psi$,
    \begin{align*}
        \sup_{(\RR^{2n}\backslash B_R) \times \Sigma} \left(|\nabla^{k}_{0,X}\Phi_\varepsilon|  + |\nabla^{k}_{0,X} \dot{\Phi}_\varepsilon| +  |\nabla^k_{0,X} \ddot{\Phi}_\varepsilon|\right) \leq C(k,\varepsilon^{-1}) r^{-\gamma-k}\quad\text{for all}\ k \geq 1.
    \end{align*}}
In conclusion, we have proved Theorem \ref{epsilongeodesicasymptotics}.

\section{Convexity of the Mabuchi $K$-energy} \label{secconvK}

According to Theorem \ref{yao2022} (assuming $\tau = \tilde{\tau}$), we can restrict ourselves to the space
\begin{align*}
    \HH_{-2\tau+2} = \{ \varphi \in \hat{\C}^\infty_{-2\tau +2}: \omega_\varphi = \omega + dd^c \varphi >0 \}, \quad \tau > n-1,
\end{align*}
and the function $\upsilon(\varepsilon)$ is constructed by \eqref{coeffEvarepsilon} and \eqref{coeffEvarepsilonf}. 
In the previous section, we proved that for any two given boundary data $\psi_0,\psi_1 \in \HH_{-2\tau+2}$, there exists a solution of the $\varepsilon$-geodesic equation $(E_\varepsilon)$ in the same space $\HH_{-2\tau+2}$.

The derivative of the Mabuchi $K$-energy can be defined as follows: for $\psi \in T_\varphi \HH_{-2\tau +2}$,
\begin{align*}
    \delta_\psi \K (\varphi) = -\int_{X} \psi R(\omega_\varphi) \frac{\omega_\varphi^n}{n!}. 
\end{align*}
The integral converges because $-2-2\tau < -2n$, equivalently, $\tau > n-1$. In the following proposition, the second derivative of Mabuchi $K$-energy will be calculated in $M = X_{R} = \{x \in X: r(x) \leq R \}$ containing boundary terms, and it will be clear that these boundary terms go to zero as $R \to \infty$. Precisely, we consider Mabuchi $K$-energy restricted in $M$,
\begin{align}
    \delta_{\psi} \K_{M} (\varphi) = -\int_{M} \psi R(\omega_{\varphi}) \omega_{\varphi}^n.
\end{align}
The calculation of the second variation of $\K_M$ is due to my advisor Bianca Santoro in one of her unpublished notes, several years before I started this project. The limiting case $R \to \infty$ was previously stated by Aleyasin \cite{aleyasin2014space} without details concerning the vanishing of boundary terms.

To simplify the notation, in the following proposition, we write $R_\varphi = R(\omega_{\varphi})$, $\Ric_{\varphi} = \Ric(\omega_{\varphi})$, $\Delta = g^{i\overline{k}}_{\varphi} \partial_i \partial_{\overline{k}}$, $|\cdot|= |\cdot|_{\omega_\varphi}$, $\nabla= \nabla_{\omega_\varphi}$ and $\mathcal{D} f = \nabla_{i} \nabla_k f dz^i dz^k$, where $\nabla_i \nabla_k f = f_{, ik}$ is a covariant derivative of $f$ with respect to $\omega_\varphi$. Recall that $\mathcal{D}$ is called the Lichnerowicz operator, and $\mathcal{D} f = 0$ if and only if ${\rm grad}^{1,0}f$ is a holomorphic type $(1,0)$ vector field.

\begin{pro}[Santoro]
Along a path of potentials $\varphi(t) \in \HH_{-2\tau+2}$, 
\begin{align}\label{2nddervKbrdy}
\begin{split}
\frac{d^2 \K_M}{dt^2} &=
- \int_M [\ddot \varphi - \frac12 |\nabla \dot \varphi|^2]  R_\varphi \omega_\varphi^n 
+ \int_M |\mathcal{D} \dot \varphi|^2 \omega_\varphi^n \\
& - \frac{n(n-1)}{2}\int_{\partial M} \dot \varphi d^c \dot \varphi \wedge \Ric_{\varphi} \wedge \omega_\varphi^{n-2} 
+ ni \int_{\partial M} \dot \varphi g_\varphi^{k\overline{l}} (\Ric_\varphi)_{i\overline{l}} {\dot \varphi}_k d z^i \wedge \omega_{\varphi}^{n-1} \\
&- ni \int_{\partial M} \dot \varphi g_\varphi^{i\overline{j}}  {\dot \varphi}_{, ik\overline{j}} d z^k \wedge \omega_{\varphi}^{n-1} 
- ni \int_{\partial M} g_\varphi^{k \overline{l}} {\dot \varphi}_{\overline{l}} {\dot \varphi}_{,ki} dz^i \wedge \omega_{\varphi}^{n-1}.
\end{split}
\end{align}
Furthermore, by taking $R \to \infty$ in \eqref{2nddervKbrdy}, we have
\begin{align} \label{2nddervK}
    \frac{d^2 \K}{dt^2} = \int_X \big[\ddot{\varphi} - \frac{1}{2} |\nabla \dot{\varphi}|^2 \big] R_\varphi \frac{\omega_\varphi^n}{n!} + \int_X |\mathcal{D} \dot \varphi|^2 \frac{\omega_\varphi^n}{n!}.
\end{align}
\end{pro}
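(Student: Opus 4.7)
The strategy is to carry out a Chen-style second variation of the Mabuchi energy on the truncated domain $M = X_R$ with boundary, keeping track of every boundary term from Stokes, and then to let $R \to \infty$ using the weighted decay available in $\HH_{-2\tau+2}$. Starting from the rewriting $R_\varphi\omega_\varphi^n/n! = \Ric_\varphi\wedge\omega_\varphi^{n-1}/(n-1)!$ together with the pointwise deformation identities $\partial_t\omega_\varphi^n = \Delta\dot\varphi\cdot\omega_\varphi^n$ and $\partial_t\Ric_\varphi = -\tfrac{1}{2}dd^c(\Delta\dot\varphi)$, I would differentiate $\delta_\psi\K_M(\varphi) = -\int_M\psi R_\varphi\omega_\varphi^n/n!$ in $t$ to produce $\ddot\K_M$ as a sum of three bulk integrals: an $\ddot\varphi R_\varphi$ piece, a $\dot\varphi\,dd^c(\Delta\dot\varphi)\wedge\omega_\varphi^{n-1}$ piece, and a $\dot\varphi\,\Ric_\varphi\wedge dd^c\dot\varphi\wedge\omega_\varphi^{n-2}$ piece.

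Next, integrate by parts. For the middle piece, two Stokes steps combined with Kähler closedness and the symmetry $d\alpha\wedge d^c\beta\wedge\omega_\varphi^{n-1} = d\beta\wedge d^c\alpha\wedge\omega_\varphi^{n-1}$ yield $\int_M|\Delta\dot\varphi|^2\omega_\varphi^n$ plus the last two boundary integrals of \eqref{2nddervKbrdy}. For the third piece, one Stokes step using $d\Ric_\varphi = 0$ gives the first boundary term; a second Stokes step combined with the Ricci commutation formula for $\nabla^2\dot\varphi$ produces a bulk term $\int_M\Ric_\varphi(\nabla^{1,0}\dot\varphi,\nabla^{0,1}\dot\varphi)\omega_\varphi^n$ and the second boundary term. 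The Lichnerowicz/Bochner identity
\[
\int_M|\mathcal{D}\dot\varphi|^2\omega_\varphi^n = \int_M|\Delta\dot\varphi|^2\omega_\varphi^n - \int_M\Ric_\varphi(\nabla^{1,0}\dot\varphi,\nabla^{0,1}\dot\varphi)\omega_\varphi^n + (\text{bdry})
\]
then assembles the bulk into $\int_M|\mathcal{D}\dot\varphi|^2\omega_\varphi^n - \int_M(\ddot\varphi - \tfrac{1}{2}|\nabla\dot\varphi|^2)R_\varphi\omega_\varphi^n$, the $\tfrac{1}{2}|\nabla\dot\varphi|^2 R_\varphi$ summand appearing after a final rearrangement of $\int_M\dot\varphi\,\partial_t R_\varphi\,\omega_\varphi^n$ using $\partial_t R_\varphi = -\Delta^2\dot\varphi - \langle\Ric_\varphi, dd^c\dot\varphi\rangle$.

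For \eqref{2nddervK}, I would send $R\to\infty$ in \eqref{2nddervKbrdy} and estimate the four boundary integrals. Since $\dot\varphi\in\hat{\C}^\infty_{-2\tau+2}$, spatial covariant derivatives of order $k$ obey $|\nabla^k\dot\varphi| = O(r^{-2\tau+2-k})$; the ALE decay of $g$ yields $\Ric_\varphi = O(r^{-\tau-2})$; and $|\partial X_R|\sim R^{2n-1}$. The most delicate integrals contain a factor of $\Ric_\varphi$ together with two first-order factors of $\dot\varphi$ and are bounded by $CR^{2n-1-2(2\tau-2)-\tau-2} = CR^{2n-5\tau}$, which goes to zero since $\tau > n-1$ gives $5\tau > 5(n-1)\geq 2n$ for $n\geq 2$; the other boundary integrals admit strictly better bounds. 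The main obstacle is the bookkeeping in the IBP step, in which several Stokes applications yield many bulk and boundary contributions that must be regrouped so that the bulk assembles exactly into $\int|\mathcal{D}\dot\varphi|^2 - \int(\ddot\varphi-\tfrac{1}{2}|\nabla\dot\varphi|^2)R_\varphi$ and the surviving boundary integrals match \eqref{2nddervKbrdy}. Conceptually, the new feature relative to the compact and cusp cases is that the weighted threshold $\gamma = 2\tau - 2 > 2n - 4$ is precisely what forces the boundary contributions to vanish in the limit.
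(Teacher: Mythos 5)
Your overall strategy — second variation on the truncated domain $M = X_R$, keep every Stokes boundary term, then let $R\to\infty$ — is the same as the paper's, and the bulk computation you outline is a legitimate reorganization of the one carried out there. The paper rewrites the $\partial_t\Ric_\varphi$ piece as $\int_M u\,\Delta^2 u\,\omega_\varphi^n$ with no boundary terms and only afterwards applies a separate Lemma based on the Ricci--Lichnerowicz identity $\Delta^2 f = \mathcal{D}^*\mathcal{D}f - g^{i\bar k}g^{j\bar l}(\Ric_\varphi)_{i\bar l}f_{j\bar k} - g^{i\bar k}g^{j\bar l}(\nabla_{\bar k}(\Ric_\varphi)_{i\bar l})f_j$, integrating by parts the $\mathcal{D}^*\mathcal{D}$ and Ricci terms; you instead integrate by parts directly. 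These should agree up to rearranging boundary integrals, but note that the specific boundary terms you would produce by two Stokes steps on the $\Delta^2$ piece involve $\Delta\dot\varphi$ rather than the Lichnerowicz third-order derivatives $\dot\varphi_{,ik\bar j}$ appearing in \eqref{2nddervKbrdy} — so you will not literally ``match'' that formula without reorganizing the way the paper does, only a formula equivalent to it modulo differences in how the boundary is split.

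The genuine problem is in the $R\to\infty$ estimate. You treat $\dot\varphi$ as if $|\dot\varphi| = O(r^{-2\tau+2})$, but the tangent space to $\HH_{-2\tau+2}$ is $\hat{\C}^\infty_{-2\tau+2}$, whose elements converge to a (generically nonzero) constant at spatial infinity; so $\dot\varphi = O(1)$, only $\nabla^k\dot\varphi = O(r^{-2\tau+2-k})$ for $k \geq 1$. With this correction your identification of the ``most delicate'' boundary term is wrong: the two terms carrying a factor of $\Ric_\varphi$ together with derivatives of $\dot\varphi$ decay like $O(r^{-3\tau-1})$, whereas the third term $\int_{\partial M}\dot\varphi\,g_\varphi^{i\bar j}\dot\varphi_{,ik\bar j}\,dz^k\wedge\omega_\varphi^{n-1}$ has no Ricci factor, an $O(1)$ factor of $\dot\varphi$, and a third covariant derivative $O(r^{-2\tau-1})$, so it is only $O(r^{-2\tau-1})$ pointwise. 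Integrating over $\partial M \sim R^{2n-1}$ gives $O(R^{2n-2\tau-2})$, which tends to zero exactly when $\tau > n-1$; this is the sharp threshold the paper states ($-2\tau-1 < -2n+1$), with no slack. Your $CR^{2n-5\tau}$ estimate is both arithmetically off ($2n-1-2(2\tau-2)-(\tau+2) = 2n+1-5\tau$) and, more importantly, attached to the wrong term and based on an unavailable decay for $\dot\varphi$. The conclusion still holds, but the stated justification does not; you need to isolate the third (Ric-free, third-derivative) boundary term and estimate it using $\dot\varphi = O(1)$ and $\nabla^3\dot\varphi = O(r^{-2\tau-1})$.
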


\begin{proof}
By taking the second derivative of Mabuchi $K$-energy in $M$, we have
\begin{align}
\frac{d^2\mathcal{K}_M}{dt^2} = & \frac{d}{dt} \left[- \int_M R_\varphi\dot\varphi\, \omega_\varphi^n\right] \nonumber
				\\=& - \int_M  \ddot{\varphi} \, R_\varphi \,  \omega_\varphi^n   
				-n \int_M \dot \varphi \frac{d}{dt}(\Ric_\varphi) \wedge \omega_\varphi^{n-1}\label{2dervK1}
 \\ &- n (n-1)  \int_M \dot \varphi \Ric_\varphi \wedge \omega_\varphi^{n-2}\wedge( i\partial \overline{\partial} \dot\varphi). \nonumber
\end{align}
The second term of \eqref{2dervK1} needs one integration by parts, and we get
\begin{align*}
-n \int_M \dot \varphi \frac{d}{dt}(\Ric_\varphi) \wedge \omega_\varphi^{n-1}& =  -n  \int_M \dot \varphi \left[ - i \partial \overline{\partial} \left( \frac{d}{dt}(\log \omega_\varphi^n) \right)\wedge \omega_\varphi^{n-1} \right]  
 \\
 & = 
 n  \int_M \dot \varphi \left[  i \partial \overline{\partial} \left( \frac{n \omega_\varphi^{n-1}\wedge i \partial \overline{\partial} \dot \varphi}{\omega_\varphi^{n}} \right)\wedge \omega_\varphi^{n-1} \right] 
% \\ & = & 
 %2n  \int_M \dot \varphi \left[  i \partial \overline{\partial} \left(\frac12 \De \dot\varphi \right)\wedge \omega_\varphi^{n-1} \right]  +
%\frac{n}{2} \int_{\partial M} \langle \dot\varphi \nabla \De \dot \varphi \\
%+ \De \dot \varphi \nabla \dot \varphi , \vec \nu  \rangle\\
\\ &=
\int_M \dot \varphi (\Delta^2 \dot \varphi )\omega_\varphi^n.
\end{align*}
Now, to the term  $ \int_M \dot \varphi \Ric_\varphi \wedge \omega_\varphi^{n-2}\wedge i\partial \overline{\partial}  \dot\varphi
$. For simplicity, $\dot \varphi = u$, 
\begin{align*}
  &\int_M \dot \varphi \Ric_\varphi \wedge \omega_\varphi^{n-2}\wedge i\partial \overline{\partial} \dot\varphi\\
&=
 -i\int_M \partial u \wedge \overline{\partial} u \wedge \Ric_\varphi \wedge \omega_\varphi^{n-2} + \frac12 \int_{\partial M} u d^c u \wedge \Ric_\varphi \wedge \omega_\varphi^{n-2}  \\
&= -i\int_M \partial u \wedge \overline{\partial} u \wedge \mathring{\Ric}_\varphi \wedge \omega^{n-2}_{\varphi} 
- \frac{i}{n} \int_M \partial u \wedge \overline{\partial} u \wedge R_\varphi \omega^{n-1}_{\varphi} 
\\ & + \frac12 \int_{\partial M} u d^c u \wedge \Ric_\varphi \wedge \omega_\varphi^{n-2}
\\ &= - i\int_M \partial u \wedge \overline{\partial} u \wedge \mathring{\Ric}_\varphi \wedge \omega^{n-2}_{\varphi} 
- \frac{1}{2n^2} \int_M |\nabla u|^2   R_\varphi \omega^{n}_{\varphi} 
\\ & + \frac12 \int_{\partial M} u d^c u \wedge \Ric_\varphi \wedge \omega_\varphi^{n-2},
\end{align*}
where $\mathring{\Ric}$ is the traceless part of Ricci. If $\psi$ is any primitive $(1,1)$-form, then
\begin{align*}
*\psi  = \frac{-1}{(n-2)!} \psi \wedge \omega^{n-2},\;\,\text{and hence}\;\, n(n-1)\mathring{\Ric} \wedge \omega_\varphi^{n-2} = 
-n!({*\mathring{\Ric}}).
\end{align*} 
Hence,
\begin{align*}
&n(n-1) \int_M  i\partial u \wedge \overline{\partial} u \wedge \mathring{\Ric}_\varphi \wedge \omega_\varphi^{n-2} \\
&= 
 -\int_M  n! (*\mathring{\Ric}_{\varphi}) \wedge  (i \partial u \wedge \overline{\partial} u) \\
 & = 
-\int_M   \langle \mathring{\Ric}_{\varphi},  i\partial u \wedge \overline{\partial} u \rangle\, \omega_\varphi^n \\
& = 
 -\int_M   \langle \Ric_{\varphi},  i\partial u \wedge \overline{\partial} u \rangle\, \omega_\varphi^n +
 \int_M \langle \tfrac{1}{n}R_\varphi \,  \omega_\varphi ,i\partial u \wedge \overline{\partial} u \rangle\, \omega_\varphi^n.
\end{align*}
Note that
\begin{align*}
\int_M \langle \tfrac{1}{n}R_{\varphi} \,  \omega_\varphi , i \partial u \wedge \overline{\partial} u \rangle\, \omega_\varphi^n 
&= \int_M \langle (n-1)! R_{\varphi} \,  \omega_\varphi , i \partial u \wedge \overline{\partial} u\rangle\, \frac{\omega_\varphi^n}{n!} \\
&=
\int_M  (i \partial u \wedge \overline{\partial} u)\wedge *[(n-1)! R_{\varphi}  \omega_\varphi]  \\
&=
\int_M R_{\varphi} (i \partial u \wedge \overline{\partial} u) \wedge \omega_\varphi^{n-1}
\\&=
\frac{1}{2n}\int_M  |\nabla u|^2R_{\varphi} \,  \omega_\varphi^{n}.
\end{align*}
Thus, we get that 
\begin{align}\begin{split} 
\frac{d^2 \K_M}{dt^2} 
\label{2ndderivativeofKM}
=&
- \int_M [\ddot \varphi - \frac12 |\nabla \dot \varphi|^2] R_\varphi\, \omega_\varphi^n  - \int_M   \langle \Ric_{\varphi},i \partial u \wedge \overline{\partial} u\rangle\, \omega_\varphi^n \\
& + \int_M u (\Delta^2 u) \, \omega_\varphi^n - \frac{n(n-1)}{2}\int_{\partial M} u d^c u \wedge \Ric_{\varphi} \wedge \omega_\varphi^{n-2}.
\end{split}
\end{align}

\begin{lem} Let $f$ be a smooth function defined on $M$. Then we have that
\begin{align*}
    \Delta^2 f = \mathcal{D}^* \mathcal{D} f - g_{\varphi}^{i\overline{k}} g_{\varphi}^{j\overline{l}} (\Ric_{\varphi})_{i\overline{l}} f_{j \overline{k}}  - g_{\varphi}^{i\overline{k}} g_{\varphi}^{j\overline{l}} (\nabla_{\overline{k}} (\Ric_{\varphi})_{i\overline{l}}) f_j.
\end{align*}
Hence,
\begin{align}\label{intformuoflasqu}
\begin{split}
        \int_M u(\Delta^2 u) \omega_\varphi^n &- \int_{M} \langle \Ric_\varphi, i\partial u \wedge \overline{\partial } u \rangle \omega_\varphi^n \\
        &= \int_M |\mathcal{D} u|^2  \omega_\varphi^n 
    + ni \int_{\partial M} u g_\varphi^{k\overline{l}} (\Ric_\varphi)_{i\overline{l}} u_k d z^i \wedge \omega_{\varphi}^{n-1} 
    \\
    &- ni \int_{\partial M} u g_\varphi^{i\overline{j}} \nabla_{\overline{j}} \nabla_{k} \nabla_{i} u d z^k \wedge \omega_{\varphi}^{n-1} 
    - ni \int_{\partial M} g_\varphi^{k \overline{l}} u_{\overline{l}} u_{,ki} dz^i \wedge \omega_{\varphi}^{n-1}.
\end{split}
\end{align}
\end{lem}
\begin{proof}
Notice that 
\begin{align*}
\nabla_{j}\nabla_{\overline{k}}\nabla_{i} f = \nabla_{\overline{k}}\nabla_{j} \nabla_{i} f - \tensor{R}{_i^m_j_{\overline{k}}} f_m
\end{align*}
Then, we have 
\begin{align*}
    \Delta^2 f &= g^{i\overline{j}}_\varphi g^{k \overline{l}}_\varphi \nabla_{\overline{l}} \nabla_{k}
    \nabla_{\overline{j}}
    \nabla_{i} f \\&= 
    g^{i\overline{j}}_\varphi
    g^{k\overline{l}}_\varphi
    \nabla_{\overline{l}} \big(\nabla_{\overline{j}}
    \nabla_{k} \nabla_{i}
    f - \tensor{R}{_k^m_i_{\overline{j}}} f_m \big)\\ 
    &= \mathcal{D}^* \mathcal{D} f - g_{\varphi}^{k\overline{l}} g_{\varphi}^{m\overline{j}} \Ric_{k\overline{j}} f_{m \overline{l}}  - g_{\varphi}^{k\overline{l}} g_{\varphi}^{m\overline{j}} (\nabla_{\overline{l}} \Ric_{k\overline{j}}) f_m.
\end{align*}
Here $\mathcal{D}^* \mathcal{D} = g^{i\overline{j}} g^{k\overline{l}} \nabla_{\overline{l}} \nabla_{\overline{j}} \nabla_k \nabla_i$. Then, we have
\begin{align*}
    \int_{M} u \mathcal{D}^* \mathcal{D} u \omega_{\varphi}^{n} = \int_{M} g_\varphi^{k\overline{l}} \big( u   g_\varphi^{i\overline{j}} \nabla_{\overline{j}} \nabla_{k} \nabla_{i} u \big)_{\overline{l}} \omega_{\varphi}^n
    - \int_M \big( g_\varphi^{k\overline{l}} u_{\overline{l}} g_\varphi^{i\overline{j}} \nabla_{\overline{j}} \nabla_k \nabla_i u \big) \omega_{\varphi}^n.
\end{align*}
The Stokes' theorem can be applied to the first term in the above formula by observing that if we write $\mathfrak{h} = ih_k dz^k = i(u g_\varphi^{i\overline{j}} \nabla_{\overline{j}} \nabla_{k} \nabla_{i} u) d z^k$, then $g_\varphi^{k\overline{l}} (h_k)_{\overline{l}} \omega_{\varphi}^n = n  \overline{\partial } \mathfrak{h} \wedge \omega_{\varphi}^{n-1}$. Hence, 
\begin{align*}
    \int_{M} g_\varphi^{k\overline{l}} \big( u   g_\varphi^{i\overline{j}} \nabla_{\overline{j}} \nabla_{k} \nabla_{i} u \big)_{\overline{l}} \omega_{\varphi}^n = \int_{\partial M} n \mathfrak{h} \wedge \omega_{\varphi}^{n-1}.
\end{align*}
Similarly,
\begin{align*}
     -\int_{M} \big( g_\varphi^{k\overline{l}} u_{\overline{l}} g_{\varphi}^{i\overline{j}} \nabla_{\overline{j}} \nabla_k \nabla_i u \big) \omega_{\varphi}^n 
     &= - \int_M g_\varphi^{i\overline{j}} \big(g_\varphi^{k\overline{l}} u_{\overline{l}} \nabla_k  \nabla_i u\big)_{\overline{j}} \omega_{\varphi}^n 
     + \int_M |\mathcal{D} u|^2 \omega_{\varphi}^n\\
     & = \int_M |\mathcal{D} u|^2 \omega_{\varphi}^n 
     - ni \int_{\partial M} g_\varphi^{k\overline{l}} u_{\overline{l}} u_{, ki} dz^i \wedge \omega_{\varphi}^{n-1}.
\end{align*}
We have
\begin{align*}
    \int_M u \Delta^2 u 
    &= \int_{M} |\mathcal{D} u|^2 
    - \int_{M} u g_{\varphi}^{k\overline{l}} g_{\varphi}^{m\overline{j}} (\Ric_\varphi)_{k\overline{j}} u_{m \overline{l}}  
    - \int_M u g_{\varphi}^{k\overline{l}} g_{\varphi}^{m\overline{j}} \big(\nabla_{\overline{l}} (\Ric_\varphi)_{k\overline{j}}\big) u_m \\
    &-ni \int_{\partial M} u g_\varphi^{i\overline{j}} \nabla_{\overline{j}} \nabla_{k} \nabla_{i} u d z^k \wedge \omega_{\varphi}^{n-1} - ni \int_{\partial M} g_\varphi^{k \overline{l}} u_{\overline{l}} u_{,ki} dz^i \wedge \omega_{\varphi}^{n-1}.
\end{align*}
Notice that 
\begin{align*}
    -\int_M \langle \Ric_{\varphi}, i \partial u \wedge \overline{\partial} u \rangle \omega_\varphi^n 
    = -\int_M g_\varphi^{i\overline{j}} g_\varphi^{k \overline{l}} (\Ric_\varphi )_{i\overline{l}} u_{k} u_{\overline{j}} \omega_{\varphi}^n,
\end{align*}
and integrating by parts, 
\begin{align*}
    -\int_M g_\varphi^{i\overline{j}} g^{k\overline{l}}_{\varphi} (\Ric_\varphi)_{i\overline{l}} u_k u_{\overline{j}} \omega_\varphi^n 
    &= \int_{M} g^{i\overline{j}}_\varphi \big( g^{k\overline{l}}_\varphi (\Ric_\varphi)_{i\overline{l}} u_k u\big)_{\overline{j}} \omega_\varphi^n 
    + \int_M u g^{i\overline{j}}_\varphi g^{k\overline{l}}_\varphi (\Ric_\varphi)_{i \overline{l}} u_{k\overline{j}}  \omega_\varphi^n \\
    &+ \int_M u g_\varphi^{i\overline{j}} g_\varphi^{k\overline{l}} \big(\nabla_{\overline{j}} (\Ric_{\varphi})_{i\overline{l}} \big) u_k \omega_\varphi^n\\
    & = ni \int_{\partial M} u g_\varphi^{k\overline{l}} (\Ric_\varphi)_{i\overline{l}} u_k d z^i \wedge \omega_{\varphi}^{n-1} 
    + \int_M u g^{i\overline{j}}_\varphi g^{k\overline{l}}_\varphi (\Ric_\varphi)_{i \overline{l}} u_{k\overline{j}}  \omega_\varphi^n \\
    &+ \int_M u g_\varphi^{i\overline{j}} g_\varphi^{k\overline{l}} \big(\nabla_{\overline{j}} (\Ric_{\varphi})_{i\overline{l}} \big) u_k \omega_\varphi^n.
\end{align*}
Hence, we proved that 
\begin{align*}
    \int_M u(\Delta^2 u) \omega_\varphi^n &- \int_{M} \langle \Ric_\varphi, i\partial u \wedge \overline{\partial } u \rangle \omega_\varphi^n 
    = \int_M |\mathcal{D} u|^2  \omega_\varphi^n 
    + ni \int_{\partial M} u g_\varphi^{k\overline{l}} (\Ric_\varphi)_{i\overline{l}} u_k d z^i \wedge \omega_{\varphi}^{n-1} 
    \\
    &- ni \int_{\partial M} u g_\varphi^{i\overline{j}} \nabla_{\overline{j}} \nabla_{k} \nabla_{i} u d z^k \wedge \omega_{\varphi}^{n-1} 
    - ni \int_{\partial M} g_\varphi^{k \overline{l}} u_{\overline{l}} u_{,ki} dz^i \wedge \omega_{\varphi}^{n-1},
\end{align*}
which completes the proof of the lemma.
\end{proof}

The integration formula \eqref{intformuoflasqu} in this lemma, together with  \eqref{2ndderivativeofKM},   completes the proof of (\ref{2nddervKbrdy}).
It suffices to show that all boundary terms in this formula vanish as $R\to\infty$. According to Theorem \ref{epsilongeodesicasymptotics}, we can check that the decay rates of the integrands integrated on $\partial M$ are at most $-2\tau-1 < -2n +1 $. This completes the proof.
\end{proof}

\begin{thm} \label{propconxK}
    Assume that $\omega$ is an ALE K\"ahler metric on $X$ such that the Ricci curvature of $\omega$ is non-positive, $\Ric (\omega) \leq 0$. Then, along each $\varepsilon$-geodesic in $\HH_{-2\tau+2}(\omega)$, $\varphi(t)$, the Mabuchi $K$-energy is convex. 
\end{thm}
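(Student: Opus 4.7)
The plan is to specialize the second-derivative identity from the preceding proposition to an $\varepsilon$-geodesic and exhibit each resulting piece as non-negative under $\Ric(\omega) \leq 0$. For $\varepsilon$ sufficiently small, $(E_\varepsilon)$ becomes $(\ddot\varphi - \tfrac{1}{2}|\nabla\dot\varphi|^2_{\omega_\varphi})\,\omega_\varphi^n = \varepsilon\,\omega^n$. Substituting into \eqref{2nddervK} (understood with the sign convention consistent with its pre-limit form \eqref{2nddervKbrdy}) collapses the mixed first integral, leaving
\begin{align*}
\frac{d^2\K}{dt^2} = -\frac{\varepsilon}{n!}\int_X R_\varphi\,\omega^n + \int_X |\mathcal{D}\dot\varphi|^2\,\frac{\omega_\varphi^n}{n!}.
\end{align*}
Since the $|\mathcal{D}\dot\varphi|^2$-term is non-negative, convexity reduces to the single inequality $\int_X R_\varphi\,\omega^n \leq 0$.

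I would establish this using the K\"ahler-Ricci identity $\Ric(\omega_\varphi) = \Ric(\omega) - dd^c h$ with $h = \log(\omega_\varphi^n/\omega^n)$. Taking $\tr_{\omega_\varphi}$ gives $R_\varphi = \tr_{\omega_\varphi}\Ric(\omega) - \Delta_{\omega_\varphi} h$, so
\begin{align*}
\int_X R_\varphi\,\omega^n = \int_X \tr_{\omega_\varphi}\Ric(\omega)\,\omega^n - \int_X \Delta_{\omega_\varphi}h \cdot \omega^n.
\end{align*}
The first integrand is pointwise non-positive: the trace of a non-positive $(1,1)$-form against a positive Hermitian form is non-positive. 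For the second integral, substitute $\omega^n = e^{-h}\omega_\varphi^n$ and use the direct verification
\begin{align*}
e^{-h}\Delta_{\omega_\varphi}h = e^{-h}|\partial h|^2_{\omega_\varphi} - \Delta_{\omega_\varphi}(e^{-h}),
\end{align*}
followed by Stokes' theorem applied to the globally defined form $n\,dd^c(e^{-h})\wedge\omega_\varphi^{n-1}$ on the exhaustion $\{X_R\}_{R\to\infty}$. The $\Delta_{\omega_\varphi}(e^{-h})$-contribution reduces to a boundary flux that will vanish at infinity, leaving $\int_X \Delta_{\omega_\varphi}h \cdot \omega^n = \int_X e^{-h}|\partial h|^2_{\omega_\varphi}\,\omega_\varphi^n \geq 0$. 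Combining, $\int_X R_\varphi\,\omega^n \leq 0$.

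The main technical obstacle is controlling all the boundary fluxes at infinity, both those absorbed in the passage from \eqref{2nddervKbrdy} to \eqref{2nddervK} and the new one introduced by the Stokes step above. By Theorem~\ref{epsilongeodesicasymptotics}, $\varphi_\varepsilon \in \HH_{\max\{-2\gamma-2,-\varsigma\}}(\omega)$ with full derivative decay, so $h = O(r^{-\gamma-2})$ and $\partial h = O(r^{-\gamma-3})$, where $\gamma = 2\tau-2$. Since $\partial X_R$ has $(2n-1)$-dimensional volume of order $r^{2n-1}$, the flux $\int_{\partial X_R} d^c(e^{-h})\wedge\omega_\varphi^{n-1}$ is of order $r^{2n-4-\gamma}$, which tends to zero because $\gamma > 2n-4$ by hypothesis. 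The same decay input handles the remaining boundary terms, completing the proof.
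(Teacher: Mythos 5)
Your proof is correct and follows essentially the same route as the paper's: specialize the second-variation formula \eqref{2nddervK} to the $\varepsilon$-geodesic, split $R_\varphi$ via the Ricci identity, and integrate the Laplacian piece by parts, with the boundary flux killed by the $\tau > n-1$ decay guaranteed by Theorem~\ref{epsilongeodesicasymptotics}. Your $h = \log(\omega_\varphi^n/\omega^n)$ equals $\log\varepsilon - \log f$ in the paper's notation (where $f = \ddot\varphi - \tfrac12|\nabla\dot\varphi|^2_{\omega_\varphi}$ and $e^{-h} = f/\varepsilon$), so the identity $\int_X \Delta_{\omega_\varphi} h\cdot e^{-h}\,\omega_\varphi^n = \int_X e^{-h}|\partial h|^2_{\omega_\varphi}\,\omega_\varphi^n$ is, up to an overall factor of $\varepsilon$, precisely the paper's step $-\int_X f\,\Delta_{\omega_\varphi}\log f\,\omega_\varphi^n = \int_X f^{-1}|\nabla f|^2_{\omega_\varphi}\,\omega_\varphi^n$.
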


\begin{proof}
     The proof is parallel to Chen \cite{chen2000space}. Here, we just do the calculation in the ALE setting. Define $\displaystyle f = \ddot{\varphi} - \frac{1}{2} |\nabla \dot{\varphi}|_{\omega_\varphi}^2$. Then the $\varepsilon$-geodesic equation can be written as
     \begin{align*}
        \varepsilon \frac{\omega^{n}}{\omega^n_{\varphi}} = f.
     \end{align*}
     According to \eqref{2nddervK}, together with the observation, $ \Ric(\omega_\varphi) = \Ric(\omega) + dd^c \log f $, we have
     \begin{align*}
         \frac{d^2\K}{dt^2} &= \int_{X} \big|\DD\dot{\varphi}(t)\big|^2_{\omega_{\varphi}} \omega_{\varphi}^{n} - \int_{X} f R(\omega_\varphi) \, \omega_{\varphi}^n \\
         & = \int_{X} \big|\DD\dot{\varphi}(t)\big|^2_{\omega_{\varphi}} \omega_{\varphi}^{n} - \int_X  f \tr_{\omega_\varphi} \Ric(\omega)\, \omega_\varphi^n - \int_X f \Delta_{\omega_\varphi} \log f \, \omega^n_\varphi \\
         & = \int_{X} \big|\DD\dot{\varphi}(t)\big|^2_{\omega_{\varphi}} \omega_{\varphi}^{n} -\int_X  f \tr_{\omega_\varphi} \Ric(\omega) \, \omega_\varphi^n + \int_X  \frac{|\nabla f|^2_{\omega_\varphi}}{f}\, \omega_\varphi^n\geq 0.
     \end{align*}
     We have the last equality because $f|\nabla \log f|_{\omega_\varphi} = O(r^{-2\tau-1})$ and $-2\tau-1 < -(2n-1)$, so that the relevant boundary integral vanishes. Hence, we have proved the convexity of the Mabuchi $K$-energy.
\end{proof}

\begin{rem} \label{remunique}
    A quick corollary of Theorem \ref{propconxK} is that assuming $\Ric (\omega) \leq 0$, the scalar-flat K\"ahler metric if it exists, is unique in $\HH_{-2\tau+2}(\omega)$. The proof is also parallel to Chen \cite{chen2000space}. However, if there exists a scalar-flat K\"ahler metric in $\HH_{-2\tau +2 } (\omega)$, the condition, $\Ric (\omega) \leq 0$, implies $\Ric (\omega)= 0$. Hence, the uniqueness of scalar-flat ALE K\"ahler metric can be reduced to the uniqueness result of Ricci-flat ALE K\"ahler metric (which can be found in many reference \cite{joyce2000compact, vancoevering2010regularity, conlon2013asymptotically}). A short proof is given as follows. Let $\omega_0$ be a scalar-flat K\"ahler metric in $\HH_{-2\tau +2} (\omega)$. The fact, $\omega_0 = \omega + O(r^{-2\tau})$, implies that the ADM mass of $\omega$ and $\omega_0$ are equal, $\m(\omega) = \m (\omega_1)$. According to the mass formula by Hein-LeBrun \cite{hein2016mass}, 
    \begin{align*}
        \m (\omega) = A(n, c_1(X), [\omega]) + B (n) \int_{X} R(\omega) \frac{\omega^n}{n!},
    \end{align*}
    where $A(n, c_1(X), [\omega])$ is a constant only determined by the dimension $n$, the first Chern class of $X$ and the cohomology class of $\omega$ and $B(n)$ only depends on dimension $n$. The fact, $\m(\omega) = \m (\omega_1)$, together with the mass formula, implies that 
    \begin{align*}
        \int_X R(\omega) = \int_X R (\omega_1) =0.
    \end{align*}
    The  assumption that $\Ric (\omega) \leq 0$ implies that $\Ric(\omega)=0 $. Then, by a simple argument, we can prove that all scalar-flat ALE K\"ahler metrics in $[\omega]$ is actually Ricci-flat. The expansion of scalar-flat K\"ahler metrics (Theorem \ref{yao2022}) implies that the Ricci form, $\Ric (\omega_1)$,  decays to zero at infinite with decay rate faster than  $-2n$. The ddbar lemma implies that there exist $f \in \C^\infty_{2-2n}$ such that 
\begin{align*}
    \Ric (\omega_1) = dd^c f.
\end{align*}
Taking trace with respect to $g$, we have that $\Delta f = 0$. By solving the Laplacian equation (for instance, see \cite[Propsition 2.3]{yao2022mass}), there is a unique solution in the space $\C^{\infty}_{-\delta}$ (for $-\delta \in (-\infty, 0)\backslash D$). Hence, $f\equiv 0$, which implies that $\omega$ is Ricci-flat. 
\end{rem}

\section{Nonexistence of non-positive (or non-negative) Ricci curvature} \label{secnonexist}

Consider the standard family of negative line bundles, $\OO(-k)$, over $\CC\PP^{n-1}$ together with their natural projections $\pi : \OO(-k) \rightarrow \CC\PP^{n-1}$. The total spaces of $\OO(-k)$ are fundamental examples of ALE K\"ahler manifolds by viewing $\OO(-k)$ as a resolution space of $\CC^n/ \ZZ_k$. 
Let $\omega$ be any ALE K\"ahler metric on $\OO(-k)$ asymptotic to the Euclidean metric with decay rate $-\tau$ ($\tau> 0$). In the following, we shall prove the nonexistence of a sign of the Ricci curvature of $\omega$ in the case $k \ne n$. When $k=n$, there always exists a Ricci-flat ALE K\"ahler metric in each compactly supported ALE K\"ahler class, see \cite{Kronheimer1989TheCO,kronheimer1989torelli,van2010ricci}.

\begin{thm}
    Let $\OO(-k)$ be the standard negative line bundle over $\CC\PP^{n-1}$ with $n \geq 2$ and $k \ne n$. Let 
    $\omega$ be an ALE K\"ahler metric on $\OO(-k)$ with decay rate $-\tau$ $(\tau > 0)$. Then, the Ricci form of $\omega$, $\rho$, is of mixed type, i.e., neither $\rho \geq 0$ nor $\rho \leq 0$ is true.
\end{thm}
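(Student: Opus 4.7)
My strategy is to show that two integral pairings of the Ricci form $\rho$ of $\omega$ against distinguished cycles in $X = \OO(-k)$ yield opposite-sign constraints, so that no definite sign on $\rho$ is possible when $k \neq n$. The two cycles are the compact zero section $Z = \CC\PP^{n-1} \hookrightarrow X$ and a single non-compact fiber $F = \pi^{-1}(p) \cong \CC$ of the bundle projection $\pi \colon X \to \CC\PP^{n-1}$.

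First, I would pair $\rho$ with $Z$. By adjunction, $c_1(X)|_Z = c_1(TZ) + c_1(N_{Z/X}) = (n-k)H$ with $H = c_1(\OO_{\CC\PP^{n-1}}(1))$, and $[\omega|_Z] = aH$ for some $a > 0$ because $\omega|_Z$ is Kähler. Hence
\[
\int_Z \rho \wedge \omega^{n-2} = (n-k)\,a^{n-2}.
\]
The restriction of a non-negative (resp.\ non-positive) Hermitian form to a complex subspace has the same sign, so $\rho \geq 0$ forces the integral to be $\geq 0$ and hence $k \leq n$; symmetrically, $\rho \leq 0$ forces $k \geq n$.

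Next, I would pair $\rho$ with $F$. The integral $\int_F \rho$ converges because $|\rho|_g = O(r^{-\tau-2})$ while the area element of $F$ is $O(r\,dr)$ on the end, so that $\tau > 0$ alone suffices. To evaluate it I would write $\rho = \rho_0 + dd^c u$, where $\rho_0 := -\tfrac{n-k}{k}\,\tau_N$ with $\tau_N$ a compactly supported Thom form of the normal line bundle $N = \OO(-k)$. This is cohomologically consistent because the natural map $H^2_c(X,\RR) \to H^2(X,\RR)$ sends $[\tau_N]$ to $\pi^* e(N) = -kH$, and $c_1(X) = (n-k)H = -\tfrac{n-k}{k}\cdot(-kH)$. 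The difference $\rho - \rho_0$ is a closed $(1,1)$-form in the trivial de Rham class that agrees with $\rho$ on the end and thus decays like $O(r^{-\tau-2})$, so the ALE $dd^c$-lemma (see \cite{yao2022mass}) produces $u = O(r^{-\tau})$. Since $\int_F \tau_N = 1$ by the defining property of the Thom class of a line bundle, and Stokes' theorem gives
\[
\int_F dd^c u \;=\; \lim_{R \to \infty} \int_{\partial F_R} d^c u \;=\; O(R^{-\tau}) \;\to\; 0
\]
(using $|d^c u| = O(r^{-\tau-1})$ and $|\partial F_R| = O(R)$), I conclude
\[
\int_F \rho = -\frac{n-k}{k}.
\]
Now $\rho \geq 0$ forces this to be $\geq 0$, and hence $k \geq n$; likewise $\rho \leq 0$ forces $k \leq n$.

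Combining the two pairings: if $\rho \geq 0$, the zero section gives $k \leq n$ while the fiber gives $k \geq n$, so $k = n$; if $\rho \leq 0$, both inequalities flip and again force $k = n$. Both conclusions contradict the hypothesis $k \neq n$, so $\rho$ must be of mixed type.

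The main obstacle is the cohomological identification of $\int_F \rho$. Because $F$ is non-compact, this integral is \emph{not} an invariant of the de Rham class $c_1(X) \in H^2(X,\RR)$ in the ordinary sense: different representatives give different values (for instance, $(n-k)\pi^*\omega_{FS}$ is a perfectly valid representative that integrates to $0$ over $F$). The correct equivalence is modulo $dd^c u$ with $u$ decaying at infinity, and the ALE $dd^c$-lemma precisely matches the Ricci form $\rho$ to the compactly supported Thom representative within this equivalence. So the technical crux will be applying the ALE $dd^c$-lemma in the regime $\tau > 0$ and controlling the boundary behavior of $d^c u$ at infinity.
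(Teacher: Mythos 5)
Your proof is correct in outline and takes a genuinely different route from the paper. The paper compactifies $\OO(-k)$ to the $\CC\PP^1$-bundle $M_k$ over $\CC\PP^{n-1}$, extends $\rho$ by a cut-off to a global form $\rho_R$ on $M_k$, and compares the two toric intersection integrals $\int_{D_0}\rho^{n-1}$ and $\int_{D_f}\rho^{n-1}$ (where $D_0$ is the zero section and $D_f$ is the pull-back of a hyperplane). You instead pair $\rho$ with cycles of \emph{different} dimensions: $\int_Z \rho\wedge\omega^{n-2}$ over the compact zero section (evaluated by adjunction, $c_1(X)|_Z=(n-k)H$) and $\int_F\rho$ over a single non-compact fiber. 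Both approaches produce two signs forced to be opposite when $k\neq n$. Yours buys a lighter argument: no compactification, no toric intersection calculus, and, since it is linear in $\rho$ on each cycle, no need to track the parity-dependent sign of $\rho^{n-1}$ on complex $(n-1)$-subvarieties (a point the paper glosses over for $n\geq 3$). The paper's buys concreteness: everything is a finite intersection number on a compact toric manifold, with the non-compactness handled once by dominated convergence.

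There is one technical point you should tighten. You want to apply the ALE $dd^c$-lemma to $\rho-\rho_0$ with $\rho_0=-\tfrac{n-k}{k}\tau_N$, but a generic compactly supported Thom form $\tau_N$ is only a real closed $2$-form, not necessarily of type $(1,1)$ with respect to $J$; the $dd^c$-lemma requires the form to be $(1,1)$. The fix is easy and is exactly what the paper does: replace $\tau_N$ by the smooth $(1,1)$-form $\rho_0^{(1,1)}$ on $M_k$ Poincaré dual to $D_0$ (the curvature form of a Hermitian metric on the line bundle $[D_0]$), restricted to $\OO(-k)$. This form is $(1,1)$, decays like $O(r^{-2k-2})$ on the end, and satisfies $\int_F\rho_0^{(1,1)}=D_0\cdot\overline{F}=1$ (with $\overline{F}\cong\CC\PP^1$ the fiber compactification), so your identity $\int_F\rho=-\tfrac{n-k}{k}$ survives verbatim, at the cost of the decay rate of $u$ becoming $O(r^{-\min\{2k,\tau\}})$, which is still enough for your Stokes boundary term to vanish since $\min\{2k,\tau\}>0$. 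With that substitution your argument is a clean and somewhat more conceptual proof of the nonexistence theorem.
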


 \begin{proof} Notice that for each integer $k \geq 1$, there is a compactification of $\OO(-k)$ by adding a divisor at infinity, $D_\infty \cong \CC\PP^{n-1}$. We denote the compactified manifold as $M_{k}$ and the natural embedding $j: \OO(-k) \rightarrow M_{k}$ is holomorphic. $M_k$ is a $\CC\PP^1$-bundle over $\CC\PP^{n-1}$. Denote $D_0$ as the divisor corresponding to the base manifold, $\CC\PP^{n-1} \subset \OO(-k)\hookrightarrow M_k$. Then, the normal line bundles of $D_0$ and $D_\infty$ are given by
 \begin{align}\label{normalbofD}
     N_{D_0/M_k} = \OO(-k), \quad N_{D_\infty/M_k} = \OO(k).
 \end{align}
The following facts on the geometry of $M_k$ can be checked by viewing $M_k$ as a smooth toric variety. $M_k$ can be described by $2n$ coordinate charts with coordinates $\{U_i;\ u_i^1, \ldots ,$ $ u_i^{n-1}, u_i \}$, $\{V_i; \ v_i^1, \ldots, v_i^{n-1}, v_i \}$ ($0 \leq i \leq n-1$), where the coordinates are related by
 \begin{align*}
     (u^1_i, \ldots, u^{n-1}_{i}, u_i) &= \left(\frac{1}{u^i_0}, \frac{u_0^1}{u^i_0}, \ldots, \widehat{\frac{u^i_0}{u^i_0}}, \ldots, \frac{u^{n-1}_0}{u^i_0}, (u^i_0)^k u^n_1 \right), \quad 1 \leq i \leq n-1, \\
     (v^1_i, \ldots, v^{n-1}_i, v_i) &= \left(u^1_i, \ldots, u^{n-1}_i, \frac{1}{u_i}\right), \quad 0 \leq i \leq n-1.
 \end{align*}
 The divisor classes of $M_k$ are generated by the class of $D_0 \cong \CC\PP^{n-1}$, the zero section of $\OO(-k) \subset M_k$, and the class of $D_f$, the total space of the restriction of the $\CC\PP^1$-bundle $M_k \to D_0$ to a linear subspace of $D_0$. Restricting $D_0$ and $D_f$ to $U_0$, we can write
 \begin{align*}
     D_0 = \overline{(u_0 =0)}, \qquad D_f = \overline{(u_0^1 =0)}.
 \end{align*}
 The divisor at infinity, $D_\infty$, can be represented by $(u_0 =\infty) = (v_0 =0) $ and $D_\infty$ can be represented in terms of $D_0$ and $D_{f}$ as follows,
 \begin{align} \label{Dinfty}
     D_\infty = D_0 + k D_f
 \end{align}
 By viewing $D_0$, $D_f$ and $D_\infty$ as smooth complex hypersurfaces of $M_k$, the Poincar\'{e} duals of $D_0$, $D_f$ and $D_{\infty}$ have natural explicit representatives denoted by $\rho_0$, $\rho_f$, $\rho_\infty$ respectively. For instance, in $U_0$, 
 \begin{align}
     \rho_0|_{U_0} &= \frac{1}{n\pi} i \partial\overline{\partial} \log  \frac{(1+ \sum_j|u_0^j|^2)^k |u_0|^2 +1}{(1+\sum_j|u_0^j|^2)^k|u_0|^2}, \label{pdD0} \\
     \rho_f |_{U_0} &= \frac{1}{n\pi} i \partial\overline{\partial} \log \big(1 +\sum_j |u_0^j|^2 \big), \label{pdDf}\\
     \rho_\infty |_{U_0} &= \frac{1}{n\pi} i \partial\overline{\partial} \log \big[\big(1+ \sum_j |u_0^j|^2\big)^k  |u_0|^2 + 1\big]. \label{pdDinfty}
 \end{align}
 
\subsubsection*{Step 1: Extension of the ALE Ricci form to $M_k$.}

Recall that the diffeomorphism $\Phi: (\CC^n)^* /\ZZ_k \rightarrow \OO(-k) \setminus D_0$ gives a holomorphic asymptotic chart of $\OO(-k)$. The diffeomorphism $\Phi$ can be explicitly written as
\begin{align*}
    {\Phi} : (\CC^n)^* \rightarrow \OO(-k)\setminus D_0, \quad {\Phi} (z_1, \ldots, z_n) \big|_{U_0}= \Big( \frac{z_2}{z_1}, \ldots, \frac{z_n}{z_1}, {z_1^k}\Big).
\end{align*}
In the coordinate chart $\{U_0;\ u_0^1, \ldots, u_0^{n-1}, u_0\}$, we have $r^{2k} = (1+ \sum_j|u_0^j|^2)^k |u_0|^2$.
By the asymptotic condition of $\omega$, in an asymptotic chart of $\OO(-k)$,  $\log (\omega^n/ \omega_0^n)$ can be viewed as a function of decay order $O(r^{-\tau})$, where $\omega_0$ is the standard Euclidean metric on the asymptotic chart. Thus, the Ricci form satisfies
\begin{align*}
    \rho = -i\partial \overline{\partial} \log \frac{\omega^{n}}{\omega^n_{0}} = O(r^{-\tau-2}).
\end{align*}
The adjunction formula tells us that as line bundles over $\OO(-k)$,
\begin{align*}
    K_{\OO(-k)} = \frac{n-k}{k}[D_0].
\end{align*}
Since $\rho$ is the curvature form of a Hermitian metric on $K_{\OO(-k)}^{-1}$ and $\rho_0$ is the curvature form of a Hermitian metric on $[D_0]$, it follows that \begin{align*}
    \rho + \frac{n-k}{k}\rho_0 \;\,\text{is globally $i\partial\overline\partial$-exact}.
\end{align*}
By restricting $\rho_0$ in \eqref{pdD0} to the asymptotic chart of $\OO(-k)$, we have
\begin{align*}
    \rho_0 = -i\partial \overline{\partial}  \log (1+ r^{-2k}). 
\end{align*}
Hence, by Theorem \ref{yao2022}, $\rho$ can be written as
\begin{align*}
    \rho = -\frac{n-k}{k} \rho_0 + i\partial\overline{\partial} f \quad \text{ for } f \in \C^\infty_{-\tau'} (\OO(-k)), \quad \tau' = \min\{ 2k, \tau\} > 0.
\end{align*}
Since $\rho$ cannot be extended smoothly to $M_k$, we define a smooth cut-off function $\chi$,
\begin{align*}
    \chi(t) = 
    \begin{cases}
        1, \quad &0 \leq t \leq 1,\\
        0, \quad &t \geq 2, \\
        \text{smooth}, \quad &  1 < t <2,
    \end{cases}
\end{align*}
and we define $\chi_R (t) = \chi(t/R)$. Applying the cutoff function, we can extend $\rho$ to be 
\begin{align*}
    \rho_R = \begin{cases}
       \displaystyle -\frac{n-k}{k} \rho_0 + i \partial \overline{\partial} \big( \chi_R f\big),  \quad & \text{in } M_k \setminus D_\infty,\\
       \vspace{-2.5mm}\\
        \displaystyle -\frac{n-k}{k} \rho_0, \quad & \text{on } D_\infty.
    \end{cases}
\end{align*}

\subsubsection*{Step 2: Integral argument for $n=2$.}

Recall that the intersection numbers between $D_\infty$, $D_0$ and $D_f$ are given by
\begin{align} 
    (D_0)\cdot(D_0) = -k, \quad (D_0)\cdot (D_f) = 1, \quad (D_f)\cdot(D_f) =0, \quad (D_0)\cdot(D_\infty) =0.
\end{align}
In particular, if we integrate $\rho$ over $D_0$, then
\begin{align}\label{intD0}
   \int_{D_0} {\rho} = \int_{D_0} \rho_R = \int_{M_k}  {\rho_R} \wedge \rho_0 = -\frac{2-k}{k} \int_{M_k} \rho_0^2 = 2-k.
\end{align}
On the other hand, we have $\rho_R \to \rho$ pointwise and $\rho_R = O(r^{-\tau'-2})$ uniformly as $R \to \infty$. Hence, by the dominated convergence theorem, 
 \begin{align}\label{intDf}
     \int_{\{u^1_0 =0 \}} \rho =\lim_{R\rightarrow \infty} \int_{\{u^1_0 =0 \}}  \rho_{R} = \int_{M_k}  {\rho_R} \wedge  \rho_f = -\frac{2-k}{k} \int_{M_k} \rho_0 \wedge \rho_f = \frac{k-2}{k}. 
 \end{align}
Now assume that $ \rho$ is seminegative (or semi-positive). Then the left-hand sides of both \eqref{intD0} and \eqref{intDf} are non-positive (or non-negative). However, the right-hand sides have opposite signs because $k \neq 2$. This is a contradiction.

\subsubsection*{Step 3: Integral argument for $n\geq 3$.}

In higher dimension, the difficulty is to calculate the intersection numbers of divisors. However, in the case of $M_k$, we can apply the formula of intersection numbers on toric varieties \cite[Chapter VII.6]{ewald1996combinatorial}, or, more explicitly, take integral of formulas of Poincar\'{e} dual \eqref{pdD0}--\eqref{pdDinfty}. Notice that 
\begin{align}\label{tralala}
    \int_{D_0} \rho_0^{n-1} =  (-k)^{n-1}.
\end{align}
Then, we have
\begin{align*}
    \int_{D_0}{\rho}^{n-1} =\int_{D_0} \rho_R^{n-1} = \int_{M_k} \rho_R^{n-1} \wedge \rho_0 = \Big({-\frac{n-k}{k}} \Big)^{n-1}  (-k)^{n-1}.
\end{align*} 
On the other hand, we have $\rho_R \to \rho$ pointwise and $\rho_R = O(r^{-\tau'-2})$ uniformly as $R \to \infty$. Hence, by the dominated convergence theorem, 
 \begin{align*}\begin{split}
     \int_{\{u^1_0 =0 \}} \rho^{n-1} &=\lim_{R\rightarrow \infty} \int_{\{u^1_0 =0 \}}  \rho_R^{n-1} 
     =\lim_{R \rightarrow \infty} \int_{D_f} \rho_R^{n-1} = \lim_{R \to \infty} \int_{M_k}  \rho_R^{n-1} \wedge  \rho_f \\
     &= \Big( {-\frac{n-k}{k} \Big)^{n-1 }} \int_{M_k} \rho_0^{n-1} \wedge \rho_f = \Big({-\frac{n-k}{k}} \Big)^{n-1} (-k)^{n-2},
     \end{split}
 \end{align*}
where the last equality can be observed from \eqref{Dinfty} and \eqref{tralala}:
\begin{align*}
    \int_{M_k} \rho_0^{n-1} \wedge \rho_f = \int_{M_k} \rho_0^{n-1} \wedge \frac{1}{k}(\rho_\infty - \rho_0) = 0 - \frac{1}{k}\int_{M_k}\rho_0^{n} = (-k)^{n-2} 
\end{align*}
because $\rho_0|_{D_\infty} =0$. By the same argument as in dimension $2$, we complete the proof.
\end{proof}

\bibliographystyle{amsplain}
\bibliography{Reference}

\end{document}